\theoremstyle{plain}
\newtheorem{theorem}{\bf Theorem}
\newtheorem{corollary}{\bf Corollary}[section]
\newtheorem{lemma}{\bf Lemma}
\newtheorem{proposition}{\bf Proposition}[section]
\newcommand{\R}{{\mathbb R}}
\newcommand{\rn}{{\mathbb R}^n}
\newcommand{\eps}{\epsilon}
\begin{document}

\title[Blow-up Analysis for Hardy-Sobolev equations]{Hardy-Sobolev Equations with asymptotically vanishing singularity: Blow-up analysis for the minimal energy}
\author{Saikat Mazumdar}
\address{Department of Mathematics, The University of British Columbia, 1984 Mathematics Road, Vancouver, BC, Canada V6T 1Z2}
\email{saikat@math.ubc.ca}

\begin{abstract}
We study the asymptotic behavior of a sequence of positive solutions $(u_{\epsilon})_{\epsilon >0}$  as $\epsilon \to 0$ to the family of equations
\begin{equation*}
\left\{\begin{array}{ll}
\Delta u_{\epsilon}+a(x)u_{\epsilon}= \frac{u_{\epsilon}^{2^*(s_{\epsilon})-1}}{|x|^{s_{\epsilon}}}& \hbox{ in }\Omega\\
u_{\epsilon}=0 & \hbox{ on }\partial\Omega.
\end{array}\right.
\end{equation*}
where   $(s_{\epsilon})_{\epsilon >0}$ is a sequence of positive real numbers such that  $\lim \limits_{\epsilon \rightarrow 0} s_{\epsilon}=0$, $2^{*}(s_{\epsilon}):= \frac{2(n-s_{\epsilon})}{n-2}$ and $\Omega \subset \R^{n}$ is a bounded smooth domain such that $0 \in \partial \Omega$.  When the sequence  $(u_{\epsilon})_{\epsilon >0}$ is uniformly bounded in $L^{\infty}$, then upto a subsequence it  converges strongly to a minimizing solution of the stationary  Schr\"{o}dinger equation with critical growth.  In case the sequence blows up,  we  obtain strong pointwise control  on the blow up sequence,  and then using the Pohozaev identity localize the point of singularity, which in this case can at most be one,  and derive  precise  blow up rates. In particular when $n=3$ or $a\equiv 0$ then blow up can  occur  only at an interior point of  $\Omega$ or the point $0 \in \partial \Omega$. 
\end{abstract}
\date{February 8th, 2017}
\subjclass[2010]{35J60, 35B40}
\thanks{This work is part of the PhD thesis of the author, funded by "F\'ed\'eration Charles Hermite" (FR3198 du CNRS) and "R\'egion Lorraine". The author acknowledges these two institutions for their supports.}

\maketitle

\section{Introduction}
Let  $\Omega$ be  a  bounded smooth oriented  domain of $\R^{n}$,  $n \geq 3$, such that $0 \in \partial \Omega$. We define the Sobolev space $H^{2}_{1,0}(\Omega)$ as the completion of the space $C^{\infty}_{c}(\Omega)$, the space of compactly supported  smooth functions in $\Omega$, with respect to  the norm $u\mapsto \| u\|_{H^{2}_{1,0}(\Omega)} = \vert \nabla u\Vert_{L^2(\Omega)}$. We let $2^{*}:=\frac{2n}{n-2}$ be the critical Sobolev exponent for the embeding $H^{2}_{1,0}(\Omega)\hookrightarrow L^{p}(\Omega)$. Namely, the embedding is defined and continuous for $1\leq p\leq 2^*$, and it is compact iff $1\leq p<2^*$. Let $a \in C^{1}(\overline{\Omega})$ be such that the operator  $\Delta + a$ is coercive in $\Omega$, that is there exists  $A_{0}>0$ such that $\int_\Omega ( \left| \nabla \varphi \right|^{2} + a \varphi^{2})~ dx \geq A_{0} \| u\|^{2}_{H^{2}_{1,0}(\Omega)}$ for all $\varphi \in H^{2}_{1,0}(\Omega)$. Solutions $u\in C^2(\overline{\Omega})$ to the problem
\begin{equation*}
\left\{\begin{array}{ll}
\Delta u+a(x)u= u^{2^*-1}& \hbox{ in }\Omega\\
u>0 & \hbox{ in }\Omega\\
u=0 & \hbox{ on }\partial\Omega
\end{array}\right.
\end{equation*}
(often referred to as "Brezis-Nirenberg problem") are critical points of the functional
$$u\mapsto J(u):=\frac{\int_{\Omega} \left(~  |\nabla u|^{2} + a u^{2}~ \right) dx}{\left( ~ \int_{\Omega} {|u|^{2^{*}}} ~ dx \right)^{2/2^{*}} }.$$
Here, $\Delta:=-\hbox{div}(\nabla)=-\sum_i\partial_{ii}$ is the Laplacian with minus sign convention. A natural way to obtain such critical points is to find minimizers to this functional, that is to prove that
\begin{align*}
\mu_{a}(\Omega)= \inf \limits_{u \in H^{2}_{1,0}(\Omega) \backslash \{ 0\}} J(u)
\end{align*}
is achieved. There is a huge and extensive litterature on this problem, starting with the pioneering article of Brezis-Nirenberg \cite{bn} in which the authors completely solved the question of existence of minimizers for $\mu_a(\Omega)$ when $a\equiv $constant and $n\geq 4$ for any domain, and $n=3$ for a ball. Their analysis took inspiration from the contributions of Aubin \cite{aubin} in the resolution of the Yamabe problem. The case when $a$ is arbitrary and $n=3$ was solved by Druet \cite{druet} using blowup analysis. 

\medskip\noindent In \cite{GK}, Ghoussoub-Kang suggested an alternative approach by adding a singularity in the equation as follows. For any $s \in [0,2)$, we define
\begin{align*}
2^{*}(s):= \frac{2(n-s)}{n-2} 
\end{align*}
so that $2^{*}=2^{*}(0)$. Consider the  weak solutions $u\in H^{2}_{1,0}(\Omega) \backslash \{ 0\}$ to the problem
\begin{equation*}
\left\{\begin{array}{ll}
\Delta u+a(x)u= \frac{u^{2^*(s)-1}}{|x|^s}& \hbox{ in }\Omega\\
u\geq 0 & \hbox{ in }\Omega\\
u=0 & \hbox{ on }\partial\Omega.
\end{array}\right.
\end{equation*}
Note here that $0\in\partial\Omega$ is a boundary point. Such solutions can be achieved as minimizers for the problem
\begin{align}{\label{mu with s}}
\mu_{s,a}(\Omega)= \inf \limits_{u \in H^{2}_{1,0}(\Omega) \backslash \{ 0\}} \frac{\int \limits_{\Omega} \left(~  |\nabla u|^{2} + a u^{2}~ \right) dx}{\left( ~ \int \limits_{\Omega} \frac{|u|^{2^{*}(s)}}{|x|^{s}}~ dx \right)^{2/2^{*}(s)} }   \qquad \text{ for }  s\in (0,2)
\end{align}
Consider a sequence of positive real numbers $(s_{\epsilon})_{\epsilon >0}$ such that  $\lim \limits_{\epsilon \rightarrow 0} s_{\epsilon}=0$. We let $(u_{\epsilon})_{\epsilon>0}   \in C^{2} \left( \overline{\Omega} \backslash \{ 0\} \right) \cap C^{1}\left( \overline{\Omega}\right)$ such that  
\begin{eqnarray}{\label{the eqn}}
 \left \{ \begin{array} {lc}
          \Delta u_{\epsilon} + a u_{\epsilon} =  \frac{ u_{\epsilon}^{2^{*}(s_{\epsilon})-1} }{|x|^{s_{\epsilon}}}  \qquad \ \  \text{ in } \Omega,\\
          \qquad  \qquad u_{\epsilon} > 0  \qquad  \qquad \quad  \ \text{ in }   \Omega,\\
            \qquad    \qquad  u_{\epsilon} =0  \qquad  \qquad \quad \   \text{ on }  \partial \Omega.
            \end{array} \right. 
\end{eqnarray}
Moreover, we assume that the $(u_\epsilon)$'s are of minimal energy type in the  sense that 
\begin{align}{\label{min energy condition}}
\frac{\int \limits_{\Omega} \left(~ |\nabla u_{\epsilon}|^{2} + a u_{\epsilon}^{2}~ \right)  dx}{\left( ~ \int \limits_{\Omega} \frac{|u_{\epsilon}|^{2^{*}(s_{\epsilon})}}{|x|^{s}}~ dx \right)^{2/2^{*}(s_{\epsilon})} }   =\mu_{s_{\epsilon},a}(\Omega)+o(1) \leq  \frac{1}{K(n,0)}+ o(1)
\end{align}
as $\epsilon\to 0$, where $K(n,0)>0$ is the best constant in the Sobolev embedding defined in \eqref{best Sobolev  constant}. Indeed, it follows from Ghoussoub-Robert \cites{GRgafa,GRimrn} that such a family $(u_\epsilon)_\epsilon$ exists if the the mean curvature of $\partial \Omega$ at  $0$ is negative.

\medskip\noindent In this paper we are interested   in studying the asymptotic behavior of the sequence $(u_{\epsilon})_{\epsilon >0}$ as $\epsilon \rightarrow 0$. As proved in Proposition \ref{weak lim is a sol}, if the weak limit $u_0$ of $(u_\epsilon)_\epsilon$ in $H_{1,0}^2(\Omega)$ is nontrivial, then the convergence is indeed strong and $u_0$ is a minimizer of $\mu_a(\Omega)$. We completely deal  with the case $u_0\equiv 0$, which is  more delicate, in which blow-up necessarily occurs. In the spirit of the $C^0-$theory of Druet-Hebey-Robert \cite{dhr}, our first result is the following:

\begin{theorem}\label{th:1}
Let  $\Omega$ be  a  bounded smooth oriented  domain of $\R^{n}$,  $n \geq 3$ , such that $0 \in \partial \Omega$, and let  $a \in C^{1}(\overline{\Omega})$ be such that the operator  $\Delta + a$ is coercive in $\Omega$. Let $(s_{\epsilon})_{\epsilon >0} \in (0,2)$ be  a sequence such that $\lim \limits_{\epsilon \to 0 } s_{\epsilon}=0$. Suppose that the sequence  $\left(u_{\epsilon} \right)_{\epsilon>0} \in H^{2}_{1,0}(\Omega)$, where for  each  $\epsilon >0$,  $u_{\epsilon}$  satisfies  $(\ref{the eqn})$ and $(\ref{min energy condition})$, is   a  {\emph{blowup sequence}}, i.e 
\begin{align*}
u_{\epsilon}  \rightharpoonup  0  \qquad \text{weakly in }  H^{2}_{1,0}(\Omega) \qquad  \text{ as } ~\epsilon \rightarrow 0
\end{align*}
Then, there exists  $C>0$  such that  for all $\epsilon >0$
\begin{align*}
 u_{\epsilon}(x) \leq C \left( \frac{\mu_{\epsilon}}{ \mu_{\epsilon}^{2} +|x-x_{\epsilon}|^{2} }\right)^{\frac{n-2}{2}} \qquad \text{ for all } x \in  \Omega
\end{align*}
where $\mu_\epsilon^{-\frac{n-2}{2}}=u_\epsilon(x_\epsilon)=\max \limits_{x\in\Omega}u_\epsilon(x)$.
\end{theorem}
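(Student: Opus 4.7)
My strategy is to adapt the $C^0$-blow-up theory of Druet-Hebey-Robert \cite{dhr} to the present Hardy-Sobolev setting. The blow-up hypothesis forces $\mu_\epsilon \to 0$; as a first step I would rescale around the maximum by setting
\[
v_\epsilon(y) := \mu_\epsilon^{(n-2)/2}\, u_\epsilon(x_\epsilon + \mu_\epsilon y) \qquad \text{on } \widetilde\Omega_\epsilon := \mu_\epsilon^{-1}(\Omega - x_\epsilon),
\]
so that $v_\epsilon(0) = \|v_\epsilon\|_\infty = 1$ and $v_\epsilon$ solves
\[
\Delta v_\epsilon + \mu_\epsilon^2\, a(x_\epsilon + \mu_\epsilon y)\, v_\epsilon = \frac{v_\epsilon^{2^*(s_\epsilon)-1}}{|x_\epsilon/\mu_\epsilon + y|^{s_\epsilon}}.
\]
Since $s_\epsilon \to 0$ and $\mu_\epsilon^2 a \to 0$ uniformly, standard elliptic estimates (distinguishing the cases $\mathrm{dist}(x_\epsilon,\partial\Omega)/\mu_\epsilon \to +\infty$ versus bounded) yield a subsequential limit $v_0$ solving $\Delta v_0 = v_0^{2^*-1}$ on $\rn$ or on a half-space with zero boundary data. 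The minimal-energy condition \eqref{min energy condition} then forces $v_0$ to be a single Aubin-Talenti bubble carrying exactly the minimal Sobolev energy $K(n,0)^{-n/2}$.

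The core of the argument is the \emph{weak pointwise estimate}
\[
\sup_{\epsilon>0}\, \sup_{x\in\Omega}\, |x-x_\epsilon|^{(n-2)/2}\, u_\epsilon(x) < \infty.
\]
I would prove this by contradiction. If it fails, pick $y_\epsilon \in \overline\Omega$ realizing the supremum and set $\nu_\epsilon := u_\epsilon(y_\epsilon)^{-2/(n-2)}$. The contradiction assumption gives $|y_\epsilon - x_\epsilon|/\nu_\epsilon \to +\infty$, so the second rescaling
\[
w_\epsilon(y) := \nu_\epsilon^{(n-2)/2}\, u_\epsilon(y_\epsilon + \nu_\epsilon y)
\]
is centred far from $x_\epsilon$ on its own scale, ensuring spatial separation of the two concentrating bubbles. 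Up to subsequence $w_\epsilon$ converges to a second nontrivial Aubin-Talenti bubble $w_0$. Asymptotic orthogonality of the two bubbles (established with cut-off functions at an intermediate scale) then yields
\[
\int_\Omega \frac{|u_\epsilon|^{2^*(s_\epsilon)}}{|x|^{s_\epsilon}}\, dx \ \geq\ 2\, K(n,0)^{-n/2} + o(1),
\]
contradicting the minimal-energy bound in \eqref{min energy condition}.

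Finally, to upgrade the weak estimate to the Aubin-Talenti profile, I would exploit the Green's function representation
\[
u_\epsilon(x) = \int_\Omega G_a(x,y)\,\frac{u_\epsilon(y)^{2^*(s_\epsilon)-1}}{|y|^{s_\epsilon}}\, dy
\]
for the Dirichlet Green's function $G_a$ of $\Delta + a$ on $\Omega$, together with the standard bound $G_a(x,y) \leq C|x-y|^{2-n}$. Splitting the integral according to whether $|y-x_\epsilon| \leq R\mu_\epsilon$, invoking the bubble approximation in the inner region and the weak bound $u_\epsilon(y) \leq C|y-x_\epsilon|^{-(n-2)/2}$ in the outer region, a direct computation produces the claimed estimate. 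The factor $|y|^{-s_\epsilon}$ contributes only a bounded multiplicative constant since $s_\epsilon \to 0$.

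The main obstacle I anticipate is the weak estimate in the middle paragraph. The delicate bookkeeping concerns the moving singularity $|x|^{-s_\epsilon}$ when $x_\epsilon$ or $y_\epsilon$ drifts toward $0 \in \partial \Omega$: one must verify that both rescaled limit equations really reduce to the pure Sobolev equation $\Delta w = w^{2^*-1}$ with no residual weight, which is precisely what the hypothesis $s_\epsilon \to 0$ is designed to guarantee, but this requires uniform control of $\mu_\epsilon^{-s_\epsilon}, \nu_\epsilon^{-s_\epsilon}$ and of the rescaled poles $x_\epsilon/\mu_\epsilon$, $y_\epsilon/\nu_\epsilon$ along subsequences.
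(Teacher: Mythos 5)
Your overall architecture is parallel to the paper's: rescale to identify the bubble, prove the weak pointwise estimate $\sup_\Omega |x-x_\epsilon|^{(n-2)/2}u_\epsilon(x)<\infty$ by ruling out a second concentration, then upgrade via the Green's representation. The first two steps are essentially sound and match the paper in spirit (the paper rules out the second bubble via a concentration-of-energy statement, Proposition \ref{conc of energy}, rather than an explicit $2K(n,0)^{-n/2}$ doubling bound, but these are two phrasings of the same energy obstruction).

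The genuine gap is in the final upgrade. The weak bound alone cannot be fed into the Green's representation to produce the sharp profile. Concretely, in the outer region $|y-x_\epsilon|>R\mu_\epsilon$ the bound $u_\epsilon(y)\le C|y-x_\epsilon|^{-(n-2)/2}$ gives $u_\epsilon(y)^{2^*-1}\le C|y-x_\epsilon|^{-(n+2)/2}$, and
\begin{align*}
\int_{R\mu_\epsilon<|y-x_\epsilon|<r/2}|x-y|^{2-n}|y-x_\epsilon|^{-\frac{n+2}{2}}\,dy
\ \asymp\ r^{2-n}\int_{R\mu_\epsilon}^{r/2}\rho^{\frac{n-4}{2}}\,d\rho\ \asymp\ r^{-\frac{n-2}{2}},
\end{align*}
where $r=|x-x_\epsilon|$. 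This is only the weak bound again, not $r^{2-n}\mu_\epsilon^{(n-2)/2}$, which is far smaller for $r\gg\mu_\epsilon$; the weak bound carries no $\mu_\epsilon$-scaling and cannot generate one by iteration. The paper inserts a crucial intermediate estimate between the weak bound and the sharp one: Step~1 of Section~\ref{sec:blowup:2} proves, by a comparison-principle argument using $\tilde G_\epsilon^{1-\nu}$ (a power of a Green's function for the shifted operator $\Delta+(a-a_1)$) as a supersolution barrier, that for every $\alpha\in(0,n-2)$ there is $C_\alpha$ with $|x-x_\epsilon|^{\alpha}\mu_\epsilon^{\frac{n-2}{2}-\alpha}u_\epsilon(x)\le C_\alpha$. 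Taking $\alpha$ close to $n-2$, this bound does carry the correct $\mu_\epsilon$-weight, and it is precisely this estimate — not the weak bound — that is plugged into the Green's representation (Step~2 of Section~\ref{sec:blowup:2}, via a Hardy--H\"older splitting to handle the $|x|^{-s_\epsilon}$ weight) to yield the Aubin--Talenti profile. Your proposal is missing this barrier step entirely, and without it the final computation does not close.
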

\noindent With this optimal pointwise control, we to obtain more informations on the localization of the blowup point $x_0:=\lim_{\epsilon\to 0}x_\epsilon$ and the blowup parameter $(\mu_\epsilon)_\epsilon$. We let $G: \overline{\Omega} \times \overline{\Omega} \setminus \{ (x,x): x\in \overline{\Omega} \}  \longrightarrow \R$ be  the Green's function of the coercive operator $\Delta + a $ in $\Omega$ with Dirichlet boundary conditions.   
For any  $x \in {\Omega}$ we write  $G_{x}$  as:
\begin{align*}
 G_{x}(y)= \frac{1}{(n-2)\omega_{n-1}|x-y|^{n-2}} + g_{x}(y)\hbox{ for }y\in\Omega\setminus\{x\}
\end{align*}
where $\omega_{n-1}$ is the area of the $(n-1)$- sphere. In dimension $n=3$ or when $a\equiv 0$, one has that $g_{x} \in C^{2}( \overline{\Omega } \setminus \{x\})\cap C^{0, \theta}(\Omega)$ for some $0 < \theta <1$, and $g_x(x)$ is defined for all $x\in \Omega$ and is called the mass of the operator $\Delta+a$.

\begin{theorem}\label{th:2}
Let  $\Omega$ be  a  bounded smooth oriented  domain of $\R^{n}$,  $n \geq 3$ , such that $0 \in \partial \Omega$, and let  $a \in C^{1}(\overline{\Omega})$ be such that the operator  $\Delta + a$ is coercive in $\Omega$. Let $(s_{\epsilon})_{\epsilon >0} \in (0,2)$ be  a sequence such that $\lim \limits_{\epsilon \to 0 } s_{\epsilon}=0$. Suppose that the sequence  $\left(u_{\epsilon} \right)_{\epsilon>0} \in H^{2}_{1,0}(\Omega)$, where for  each  $\epsilon >0$,  $u_{\epsilon}$  satisfies  $(\ref{the eqn})$ and $(\ref{min energy condition})$, is   a  {\emph{blowup sequence}}, i.e 
\begin{align*}
u_{\epsilon}  \rightharpoonup  0  \qquad \text{weakly in }  H^{2}_{1,0}(\Omega) \qquad  \text{ as } ~\epsilon \rightarrow 0
\end{align*}
We let $(\mu_\epsilon)_\epsilon\in (0,+\infty)$ and $(x_\epsilon)_\epsilon\in \Omega$ be such that $\mu_\epsilon^{-\frac{n-2}{2}}=u_\epsilon(x_\epsilon)=\max \limits_{x\in\Omega}u_\epsilon(x)$. We define $x_0:=\lim_{\epsilon\to 0}x_\epsilon$ and we assume that
$$x_0\in \Omega\hbox{ is an interior point.}$$
Then
\begin{align*}
 &\lim_{\epsilon \to 0} \frac{s_{\epsilon}}{  \mu_{\epsilon}^{2}} = 2^{*} K(n,0)^{\frac{2^{*}}{2^{*}-2}} d_{n} ~ a(x_{0})  \qquad \text{ for }~  n \geq 5 \\
 &\lim_{\epsilon \to 0} \frac{s_{\epsilon}}{  \mu_{\epsilon}^{2} \log \left( 1/ \mu_{\epsilon}\right)}  =256\omega_{3} K(4,0)^{2}  ~ a(x_{0}) \qquad \text{ for }~  n =4\\
& \lim \limits_{ \epsilon \to 0 } \frac{s_{\epsilon}}{\mu_{\epsilon}^{n-2}} = - n b_{n}^{2}  K(n,0)^{n/2}  g_{x_{0}}(x_{0}) \qquad \text{ for }  n =3\hbox{ or }a\equiv 0.\\
\end{align*}
where $g_{x_{0}}(x_{0})$ the mass at the point $x_{0} \in \Omega$ for the operator $\Delta + a$,  and  
\begin{equation}\label{def:dn:bn}
 d_{n} = \int_{\R^{n}}  \left(  1+   \frac{|x|^{2}}{n(n-2)}  \right)^{-(n-2)}\,dx ~\hbox{  for } n\geq 5 \; ;\;  \displaystyle{b_{n}= \int _{\R^{n}}  \left( 1+\frac{ |x|^{2}}{n(n-2)} \right)^{-\frac{n+2}{2}}\,dx } 
 \end{equation}
and $\omega_{3}$ is the area of the $3$-sphere.  
\end{theorem}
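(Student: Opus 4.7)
The plan is to derive all three formulas from a single Pohozaev-type identity centered at the concentration point, fed by a two-sided refinement of the pointwise bound of Theorem~\ref{th:1}, and to identify in each dimension which among three competing orders carries the leading-order balance against $s_\epsilon$. Fix $\delta>0$ small enough that $0\notin\overline{B_\delta(x_0)}$, which is allowed because $x_0$ is interior. Testing \eqref{the eqn} against the Pohozaev multiplier $(x-x_\epsilon)\cdot\nabla u_\epsilon+\tfrac{n-2}{2}u_\epsilon$ and using the algebraic identity $\tfrac{n-2}{2}-\tfrac{n}{2^{*}(s)}=-\tfrac{s}{2^{*}(s)}$, integration by parts gives
\begin{equation*}
 \int_{B_\delta(x_\epsilon)}a\,u_\epsilon^2\,dx
 +\tfrac{1}{2}\int_{B_\delta(x_\epsilon)}(x-x_\epsilon)\cdot\nabla a\,u_\epsilon^2\,dx
 -\tfrac{s_\epsilon}{2^{*}(s_\epsilon)}\int_{B_\delta(x_\epsilon)}\frac{x_\epsilon\cdot x}{|x|^{s_\epsilon+2}}u_\epsilon^{2^{*}(s_\epsilon)}\,dx
 =\mathscr{B}_\epsilon(\delta),
\end{equation*}
where $\mathscr{B}_\epsilon(\delta)$ gathers the boundary integrals on $\partial B_\delta(x_\epsilon)$. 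The $s_\epsilon$ asymptotic will be read off by matching the singular-exponent term against whichever of the potential term or the boundary term dominates, and the dominant one depends on $n$ and on whether $a\equiv 0$.

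The key input, to be established first, is a two-sided sharpening of Theorem~\ref{th:1}. On the microscopic scale, the rescaling $\tilde u_\epsilon(y):=\mu_\epsilon^{(n-2)/2}u_\epsilon(x_\epsilon+\mu_\epsilon y)$ converges in $C^2$ on compact subsets of $\R^n$ to the Aubin-Talenti bubble $U(y)=(1+|y|^2/(n(n-2)))^{-(n-2)/2}$, because $|x|^{-s_\epsilon}\to 1$ locally at the interior point $x_0$ and by \eqref{min energy condition}. On the macroscopic scale, plugging the upper bound of Theorem~\ref{th:1} into the Green's representation for $\Delta+a$ yields the far-field expansion
\begin{equation*}
 u_\epsilon(x)=\mu_\epsilon^{(n-2)/2}\,b_n\,G(x_\epsilon,x)\bigl(1+o(1)\bigr)
\end{equation*}
locally uniformly on $\overline\Omega\setminus\{x_0\}$, since $\int_{\R^n}U^{2^{*}-1}\,dy=b_n$. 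With these expansions I would evaluate each term. Rescaling $y=(x-x_\epsilon)/\mu_\epsilon$ in $\int au_\epsilon^2$ gives $a(x_0)d_n\mu_\epsilon^2(1+o(1))$ when $n\ge 5$ (since $U^2$ is integrable with total mass $d_n$); it gives $64\omega_3 a(x_0)\mu_\epsilon^2\log(1/\mu_\epsilon)(1+o(1))$ when $n=4$, obtained by splitting $B_\delta(x_\epsilon)$ into $|x-x_\epsilon|\le R\mu_\epsilon$ and $R\mu_\epsilon\le|x-x_\epsilon|\le\delta$ and harvesting the log on the outer annulus; and in $n=3$ it is of the same order $\mu_\epsilon^{n-2}=\mu_\epsilon$ as the boundary term, to be handled in tandem with it. The singular-exponent term evaluates, using \eqref{min energy condition} and $\|U\|_{L^{2^{*}}}^{2^{*}}=K(n,0)^{-n/2}$, to $\tfrac{s_\epsilon}{2^{*}}K(n,0)^{-n/2}(1+o(1))$. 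Finally $\mathscr{B}_\epsilon(\delta)$ is computed by substituting the far-field expansion into the boundary integrals and isolating the regular part of $G(x_\epsilon,\cdot)$, yielding a contribution of the form $n\omega_{n-1}b_n^2\,g_{x_0}(x_0)\mu_\epsilon^{n-2}(1+o(1))$ up to the precise combinatorial constant and sign, after passing first to the limit $\epsilon\to 0$ and then letting $\delta\to 0$.

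Balancing dimension by dimension: in $n\ge 5$ the potential term dominates the $\mu_\epsilon^{n-2}$ boundary term, and matching against the $s_\epsilon$-term yields $s_\epsilon/\mu_\epsilon^{2}\to 2^{*}K(n,0)^{n/2}d_n a(x_0)$, which is the first formula since $2^{*}/(2^{*}-2)=n/2$; the same computation in $n=4$ carries the log factor and reproduces the second formula with coefficient $256\omega_3 K(4,0)^2 a(x_0)=4\cdot 64\omega_3 K(4,0)^2 a(x_0)$; finally in $n=3$ or $a\equiv 0$ the mass/boundary term dominates (the $\int au_\epsilon^2$ contribution either vanishes or gets absorbed into the boundary integrals via the identity $(\Delta+a)G_{x_0}=\delta_{x_0}$, reconstituting the mass of the full operator $\Delta+a$), and balancing with the singular-exponent term yields $s_\epsilon/\mu_\epsilon^{n-2}\to -nb_n^2 K(n,0)^{n/2}g_{x_0}(x_0)$. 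The main obstacle I expect is the precise evaluation of $\mathscr{B}_\epsilon(\delta)$ in the mass-dominated regime: it requires a Pohozaev manipulation applied to the singular Green's kernel $G(x_0,\cdot)$ itself to extract $g_{x_0}(x_0)$ with the correct constant $n$ and the correct sign, together with a careful exchange of the double limit $(\epsilon\to 0,\delta\to 0)$ controlled by the two-sided asymptotic; the $n=3$ subcase is additionally subtle because the interior contribution $\int au_\epsilon^2$ and the boundary integrals must be combined to reconstruct the mass of the operator $\Delta+a$, rather than of $\Delta$.
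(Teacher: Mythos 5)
Your proposal follows the paper's route: the Pohozaev identity on $B_{x_\epsilon}(\delta)$ with multiplier $(x-x_\epsilon,\nabla u_\epsilon)+\tfrac{n-2}{2}u_\epsilon$, the microscopic convergence of the $\mu_\epsilon$-rescaling to the Aubin--Talenti bubble (the paper's Theorem \ref{first blowup theorem}), the macroscopic convergence $\mu_\epsilon^{-\frac{n-2}{2}}u_\epsilon\to b_n G_{x_0}$ in $C^1_{loc}(\overline\Omega\setminus\{x_0\})$ via Green's representation (the paper's Proposition \ref{convergence to green's function 1}), and the dimension-by-dimension balance between the potential term, the boundary term, and the $s_\epsilon$-term. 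The only piece I would flag is your description of the $n=3$ (or $a\equiv 0$) case: you suggest that $\int a u_\epsilon^2$ ``gets absorbed into the boundary integrals via $(\Delta+a)G_{x_0}=\delta_{x_0}$,'' but what actually happens in the paper is simpler: that term is estimated as $O(\delta\mu_\epsilon)$, so after dividing by $\mu_\epsilon^{n-2}=\mu_\epsilon$ and sending $\epsilon\to 0$ then $\delta\to 0$ it just vanishes; the mass of the full operator $\Delta+a$ appears in $g_{x_0}(x_0)$ solely because the macroscopic limit is the Green's function of $\Delta+a$ rather than of $\Delta$, so its boundary integrals carry that information. That distinction matters if you actually try to execute the ``absorption'' computation, which would be circuitous; otherwise the plan is sound and matches the paper's Steps 1--4 of Section \ref{sec:loc:int}.
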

When $x_0\in\partial\Omega$ is a boundary point, we get similar estimates:
\begin{theorem}\label{th:3}
 Let  $\Omega$ be  a  bounded smooth oriented  domain of $\R^{n}$,  $n \geq 3$ , such that $0 \in \partial \Omega$, and let  $a \in C^{1}(\overline{\Omega})$ be such that the operator  $\Delta + a$ is coercive in $\Omega$. Let $(s_{\epsilon})_{\epsilon >0} \in (0,2)$ be  a sequence such that $\lim \limits_{\epsilon \to 0 } s_{\epsilon}=0$. Suppose that the sequence  $\left(u_{\epsilon} \right)_{\epsilon>0} \in H^{2}_{1,0}(\Omega)$, where for  each  $\epsilon >0$,  $u_{\epsilon}$  satisfies  $(\ref{the eqn})$ and $(\ref{min energy condition})$, is   a  {\emph{blowup sequence}}, i.e 
\begin{align*}
u_{\epsilon}  \rightharpoonup  0  \qquad \text{weakly in }  H^{2}_{1,0}(\Omega) \qquad  \text{ as } ~\epsilon \rightarrow 0
\end{align*}
We let $(\mu_\epsilon)_\epsilon\in (0,+\infty)$ and $(x_\epsilon)_\epsilon\in \Omega$ be such that $\mu_\epsilon^{-\frac{n-2}{2}}=u_\epsilon(x_\epsilon)=\max \limits_{x\in\Omega}u_\epsilon(x)$. Assume that 
$$\lim_{\epsilon\to 0}x_\epsilon=x_0\in\partial\Omega.$$
Then
\begin{enumerate}
\item[(1)] If $n=3$ or $a\equiv 0$, then  as $\epsilon \to 0$
\begin{align*}
 \lim_{\epsilon\to 0} \frac{s_{\epsilon}d(x_\epsilon,\partial\Omega)^{n-2}}{\mu_{\epsilon}^{n-2}}=  \frac{n^{n-1}(n-2)^{n-1} K(n,0)^{{n/2}} \omega_{n-1}  }{2^{n-2}}.
\end{align*}
Moreover, $d(x_\epsilon,\partial\Omega)=(1+o(1))|x_\epsilon|$ as $\epsilon\to 0$. In particular $x_0=0$.
\\
\item[(2)]  If $n=4$. Then  as $\epsilon \to 0$
\begin{align*}
  \frac{s_{\epsilon}}{4} \left(  K(4,0)^{-2} + o(1) \right) - \left( \frac{\mu_{\epsilon}}{d(x_\epsilon,\partial\Omega)}\right)^{2} \left(
  32  \omega_{3} + o(1)  \right)=   \mu^{2}_{\epsilon} \log \left( \frac{d(x_\epsilon,\partial\Omega)}{\mu_{\epsilon}}\right) \left[64\omega_3 a(x_{0})+ o(1) \right] 
\end{align*}
\item[(3)] If $n \geq 5$. Then  as $\epsilon \to 0$
\begin{align*}
   \frac{s_{\epsilon}(n-2)}{2n} \left( K(n,0)^{-n/2}+ o(1) \right) -\left( \frac{\mu_{\epsilon}}{d(x_\epsilon,\partial\Omega)}\right)^{n-2} \left(
   \frac{ n^{n-2}(n-2)^{n} \omega_{n-1} }{ 2^{n-1}}  + o(1)  \right) = \mu_{\epsilon}^2 \left[ d_{n} a(x_{0})+ o(1)\right]
\end{align*}
\end{enumerate}
where $d_n$ is as in \eqref{def:dn:bn}. \end{theorem}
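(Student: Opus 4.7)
The plan is to apply the Pohozaev identity with the vector field $X(x) = x - x_\epsilon$ to the equation $(\ref{the eqn})$. Multiplying by $(x-x_\epsilon)\cdot\nabla u_\epsilon + \tfrac{n-2}{2}u_\epsilon$ and integrating by parts, using $u_\epsilon = 0$ on $\partial\Omega$ (so that $\nabla u_\epsilon = (\partial_\nu u_\epsilon)\,\nu$ there), yields the master identity
\begin{equation*}
\frac{s_\epsilon(n-2)}{2(n-s_\epsilon)}\int_\Omega \frac{u_\epsilon^{2^*(s_\epsilon)}}{|x|^{s_\epsilon}}\,dx \;-\; \frac{1}{2}\int_{\partial\Omega}(x-x_\epsilon)\cdot\nu\,(\partial_\nu u_\epsilon)^2\,d\sigma \;=\; \int_\Omega a u_\epsilon^2\,dx \;+\; \mathcal R_\epsilon,
\end{equation*}
where $\mathcal R_\epsilon = \tfrac12\int_\Omega(x-x_\epsilon)\cdot\nabla a\,u_\epsilon^2\,dx + \tfrac{s_\epsilon}{2^*(s_\epsilon)}\int_\Omega \frac{(x-x_\epsilon)\cdot x}{|x|^{s_\epsilon+2}}u_\epsilon^{2^*(s_\epsilon)}\,dx$. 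A bubble-scale analysis using the pointwise control of Theorem \ref{th:1} shows $\mathcal R_\epsilon$ to be strictly lower order than the principal terms: the first piece because $|x-x_\epsilon|$ vanishes where $u_\epsilon^2$ concentrates, the second because the first moment of $(x-x_\epsilon)U^{2^*(s_\epsilon)}$ against the radial profile $U(y)=(1+|y|^2/(n(n-2)))^{-(n-2)/2}$ vanishes by symmetry.

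Next I extract the leading asymptotics of each principal quantity. From the energy hypothesis $(\ref{min energy condition})$ and Theorem \ref{th:1}, $\int_\Omega u_\epsilon^{2^*(s_\epsilon)}/|x|^{s_\epsilon}\,dx = K(n,0)^{-n/2}+o(1)$. The mass term $\int_\Omega a u_\epsilon^2$ is computed by rescaling $y=(x-x_\epsilon)/\mu_\epsilon$ with effective cut-off at scale $d_\epsilon := d(x_\epsilon,\partial\Omega)$: for $n\ge 5$ the profile $U^2$ is integrable on $\R^n$ with $\int U^2 = d_n$, giving $a(x_0)d_n\mu_\epsilon^2 + o(\mu_\epsilon^2)$; for $n=4$ the integral on $\{|y|\le d_\epsilon/\mu_\epsilon\}$ has a logarithmic divergence of coefficient $64\omega_3$, giving $64\omega_3 a(x_0)\mu_\epsilon^2\log(d_\epsilon/\mu_\epsilon)+o(\mu_\epsilon^2\log(d_\epsilon/\mu_\epsilon))$; and for $n=3$ the integral grows linearly so $\int u_\epsilon^2 = O(\mu_\epsilon d_\epsilon)$, which will be strictly subleading. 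The main technical step is the boundary integral. Away from $x_\epsilon$ the pointwise bound of Theorem \ref{th:1} upgrades, via the Green's representation against $\Delta+a$, to the $C^1$ asymptotic $u_\epsilon(x) = b_n\,\mu_\epsilon^{(n-2)/2} G(x,x_\epsilon) + o(\mu_\epsilon^{(n-2)/2})$ on $\overline\Omega\setminus\{x_0\}$. Flattening $\partial\Omega$ near $x_0$ by a smooth chart and approximating $G$ by the half-space Green function $(n-2)^{-1}\omega_{n-1}^{-1}\bigl(|x-x_\epsilon|^{-(n-2)} - |x-x_\epsilon^\ast|^{-(n-2)}\bigr)$, with $x_\epsilon^\ast$ the reflection of $x_\epsilon$ across the tangent plane, the explicit integration of $|\partial_\nu u_\epsilon|^2$ against $(x-x_\epsilon)\cdot\nu$ yields
\begin{equation*}
\frac{1}{2}\int_{\partial\Omega}(x-x_\epsilon)\cdot\nu\,(\partial_\nu u_\epsilon)^2\,d\sigma = \frac{n^{n-2}(n-2)^n\omega_{n-1}}{2^{n-1}}\left(\frac{\mu_\epsilon}{d_\epsilon}\right)^{n-2} + (\text{lower order}).
\end{equation*}

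Substituting these three asymptotics into the master identity and rearranging produces items $(2)$ and $(3)$ directly. In item $(1)$—either $a\equiv 0$ or $n=3$—the mass term is either absent or of strictly lower order than $(\mu_\epsilon/d_\epsilon)^{n-2}$ and $s_\epsilon$ (since $\mu_\epsilon d_\epsilon \ll \mu_\epsilon/d_\epsilon$ as $d_\epsilon\to 0$), so the $s_\epsilon$-term balances the boundary term alone and produces the stated explicit constant. To obtain the sharper claim $d_\epsilon = (1+o(1))|x_\epsilon|$, hence $x_0 = 0$, I would use a \emph{second} Pohozaev identity, this time with the vector field $X(x) = x$ anchored at the singularity of the weight. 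With this choice the scaling-invariance of the critical exponent $2^*(s_\epsilon)$ forces the coefficient of $\int u_\epsilon^{2^*(s_\epsilon)}/|x|^{s_\epsilon}$ to cancel identically, and the identity collapses to
\begin{equation*}
-\tfrac{1}{2}\int_{\partial\Omega} x\cdot\nu\,(\partial_\nu u_\epsilon)^2\,d\sigma = \int_\Omega au_\epsilon^2 + \tfrac12\int_\Omega x\cdot\nabla a\,u_\epsilon^2.
\end{equation*}
Decomposing $x = (x-x_\epsilon)+x_\epsilon$ and combining with the first Pohozaev identity eliminates $\int au^2$ and gives a leading-order constraint of the form $x_\epsilon\cdot\nu(x_0) = -d_\epsilon(1+o(1))$; this forces $x_0\cdot\nu(x_0) = 0$, and the only boundary point of a smooth domain compatible both with this and with the concentration of $|x|^{-s_\epsilon}u_\epsilon^{2^*(s_\epsilon)-1}$ in the minimal-energy regime is $x_0 = 0$, whence $d(x_\epsilon,\partial\Omega) = (1+o(1))|x_\epsilon|$.

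The principal technical obstacle is the boundary integral estimate. It is not enough to invoke the pointwise bound of Theorem \ref{th:1}; I will need its $C^1$ counterpart on $\partial\Omega$ through a Green's representation, together with careful control of the curvature correction to the half-space model near $x_0$, in order to isolate the exact leading constant $n^{n-2}(n-2)^n\omega_{n-1}/2^{n-1}$ rather than merely its order of magnitude.
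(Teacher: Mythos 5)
Your overall strategy coincides with the paper's in spirit — a radial Pohozaev identity plus a differentiated/translational Pohozaev identity, combined with the pointwise control of Theorem~\ref{th:1} and convergence to a singular harmonic profile near $x_0$ — but there are two places where your plan diverges from (and falls short of) the paper's proof, one of them a genuine gap.

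\textbf{Domain of the Pohozaev identity.} You apply the Pohozaev identity over the full domain $\Omega$, so your boundary term is over $\partial\Omega$, which passes through the concentration set near $x_0$. The paper instead applies \eqref{pohozaev identity for HS} on the small ball $B_{x_\epsilon}(r_\epsilon/2)$ with $r_\epsilon=d(x_\epsilon,\partial\Omega)$, so the boundary term is over the interior sphere $\partial B_{x_\epsilon}(r_\epsilon/2)$, which is entirely in $\Omega$ at the natural scale of the bubble/boundary interaction. On that sphere, one can directly rescale and invoke the $C^1_{\mathrm{loc}}$-convergence of Proposition~\ref{convergence to green's function 2} to the exact singular harmonic function $\hat v(x)=(n(n-2))^{(n-2)/2}\bigl(|x|^{2-n}-|x-(2,0)|^{2-n}\bigr)$, and then the residual boundary integral is evaluated exactly via the Pohozaev identity for harmonic functions applied to the annulus $B_0(1/2)\setminus B_0(\delta)$. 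Your approach is not wrong in principle (it is obtained from the paper's by adding the Pohozaev bulk contribution over the annulus $\Omega\setminus B_{x_\epsilon}(r_\epsilon/2)$, which is $o(\cdot)$), but the computation of the curved boundary integral over $\partial\Omega$ plus the curvature correction to the half-space Green's function requires substantially more care than a rescaled integral over a Euclidean sphere, and you correctly identify this as the main technical obstacle.

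\textbf{The claim $x_0=0$ in item (1): a gap.} Your choice of second vector field $X=x$, while elegant (the $s_\epsilon$-coefficient cancels identically), actually \emph{loses} the information needed to locate $x_0$. When you decompose $x=(x-x_\epsilon)+x_\epsilon$ and subtract the first Pohozaev, what remains is the \emph{constant}-vector-field Pohozaev with $X=x_\epsilon$, i.e.\ the $e_j$-Pohozaev summed against the weights $(x_\epsilon)_j$. In its bulk $s_\epsilon$-term, the key quantity $\int \frac{u_\epsilon^{2^*(s_\epsilon)}}{|x|^{s_\epsilon}}\frac{(x,x_\epsilon)}{|x|^2}$ tends to $K(n,0)^{-n/2}$ (since $(x,x_\epsilon)/|x|^2\to 1$ on the bubble), so the factor $|x_\epsilon|^{-2}$ that carries the geometric information is cancelled against $\sum_j(x_\epsilon)_j^2$. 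The resulting balance gives only $x_\epsilon\cdot\nu(x_0)=-d_\epsilon(1+o(1))$, i.e.\ $x_0\cdot\nu(x_0)=0$. For a general smooth bounded domain, this condition is satisfied by more than one boundary point, so it does \emph{not} characterize $x_0=0$; the sentence ``the only boundary point of a smooth domain compatible both with this and with the concentration... is $x_0=0$'' is not a proof. The paper avoids this by using the \emph{single} directional Pohozaev with $X=e_1$ (the normal at $x_0$ in the boundary chart), obtained by differentiating \eqref{pohozaev identity for HS} in $(x_\epsilon)_1$: there the bulk $s_\epsilon$-term (see \eqref{eq:30}) keeps the factor $\frac{(x_\epsilon)_1}{|x_\epsilon|^2}\approx -\frac{r_\epsilon}{|x_\epsilon|^2}$, and comparing with \eqref{res:1} yields the quantitative identity $r_\epsilon^2/|x_\epsilon|^2\to 1$. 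Since $r_\epsilon=d(x_\epsilon,\partial\Omega)\to 0$ and $|x_\epsilon|\to|x_0|$, this forces $|x_0|=0$, which is exactly the sharper statement $d(x_\epsilon,\partial\Omega)=(1+o(1))|x_\epsilon|$ and $x_0=0$. You should replace the $X=x$ identity by the scalar $e_1$-Pohozaev (equivalently, do not sum over $j$) in the endgame for item (1).

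\textbf{Minor remark.} The assertion that the second piece of $\mathcal R_\epsilon$ vanishes ``by symmetry'' of the first moment is too strong as stated: $u_\epsilon$ is not exactly the radial bubble. What is true, and what is actually needed, is that $\frac{(x-x_\epsilon)\cdot x}{|x|^{s_\epsilon+2}}=\frac{1}{|x|^{s_\epsilon}}-\frac{(x,x_\epsilon)}{|x|^{s_\epsilon+2}}$, so that this piece recombines with the $\frac{s_\epsilon(n-2)}{2(n-s_\epsilon)}\int\frac{u_\epsilon^{2^*(s_\epsilon)}}{|x|^{s_\epsilon}}$-term to produce exactly the paper's term with $\frac{(x,x_\epsilon)}{|x|^2}$ (Step 2 of the paper); its deviation from the leading value $K(n,0)^{-n/2}$ is $O(\mu_\epsilon/|x_\epsilon|)$, hence $o(1)$ — but this is best seen by rescaling and dominated convergence, not by a symmetry cancellation.
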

\noindent Theorem \ref{th:3} is a particular case of Theorem \ref{th:bdy:bis} proved in Section \ref{sec:loc:bdy}.

\smallskip\noindent The main difficulty in our analysis is due to the natural singularity at $0\in\partial\Omega$. Indeed, there is a balance between two facts. First, since $s_\epsilon>0$, this singularity exists and has an influence on the analysis, and in particular on the Pohozaev identity (see the statement of Theorem \ref{th:2}). But, second, since $s_\epsilon\to 0$, the singularity should cancel, at least asymptotically. In this perspective, our results are twofolds.\par
\smallskip\noindent Theorem \ref{th:1} asserts that the pointwise control is the same as the control of the classical problem with $s_\epsilon=0$: however, to prove this result, we need to perform a very delicate analysis of the blowup with the perturbation $s_\epsilon>0$, even for the initial steps that are usually standard when $s_\epsilon=0$ (these are Sections \ref{sec:prelim} and \ref{sec:blowup:1}).\par
\smallskip\noindent The influence and the role of $s_\epsilon>0$ is much more striking in Theorems \ref{th:2} and \ref{th:3}. Compared to the case $s_\epsilon=0$, the Pohozaev identity (see Section \ref{sec:loc:int}) enjoys an additional term involving $s_\epsilon$ that is present in the statement of Theorems \ref{th:2} and \ref{th:3}. Heuristically, this is due to the fact that the limiting equation $\Delta u=|x|^{-s}u^{2^*(s)-1}$ is not invariant under the action of the translations when $s>0$.

\medskip\noindent Some classical references for the blow-up analysis of nonlinear critical elliptic pdes are Rey \cite{rey}, Adimurthi- Pacella-Yadava \cite{apy},  Han \cite{han}, Hebey-Vaugon \cite{hv duke} and Khuri-Marques-Schoen \cite{kms}. In Mazumdar \cite{mazumdar.jde} the usefulness of blow-up analysis techniques were illustrated by proving  the existence of solution to critical growth polyharmonic problems on manifolds. The analysis of the 3 dimensional  problem by Druet \cite{druet} and the monograph \cite{dhr} by Druet-Hebey-Robert were important sources of inspiration.

\medskip\noindent This paper is organized as follows. In Section \ref{sec:bdd:domain} we recall   general facts on Hardy-Sobolev inequalities and  prove  few useful general and classical statements. Section \ref{sec:prelim} is  devoted to the proof of convergence to a ground state up to rescaling. In Section \ref{sec:blowup:1}, we perform a delicate blow-up analysis to get a first pointwise control on $u_\epsilon$. The optimal control of Theorem \ref{th:1} is proved in Section \ref{sec:blowup:2}. With the pointwise control of Theorem \ref{th:1}, we are able to estimate the maximum of the $u_\epsilon$'s when the blowup point is in the interior of the domain (Section \ref{sec:loc:int}) or on the boundary (Section \ref{sec:loc:bdy}).

\subsection*{Acknowledgements}

I would like to express my deep gratitude to Professor Fr\'ed\'eric Robert  and Professor Dong Ye, my thesis  supervisors, for their patient guidance, enthusiastic encouragement and useful critiques of this work.

\section{Hardy-Sobolev inequality  and the case of a nonzero weak limit}\label{sec:bdd:domain}
\noindent
The space $\mathscr{D}^{1,2}(\R^{n})$ is defined as the completion of the space $C^{\infty}_{c}(\R^{n})$, the space of compactly supported smooth functions in $\R^{n}$,  with respect to the norm $\| u\|_{\mathscr{D}^{1,2}} = \Vert\nabla u\Vert_{L^2(\rn)}$. 
The embedding $\mathscr{D}^{1,2}(\R^n) \hookrightarrow L^{2^{*}}(\R^n)$ is continuous,  and we denote 
the best constant of this embedding by $K(n,0)$  which can be characterised as
\begin{align}{\label{best Sobolev  constant}}
\frac{1}{K(n,0)} = \inf \limits_{u \in \mathscr{D}^{1,2}(\R^{n})\backslash \{ 0\}} \frac{\int \limits_{\R^{n}} |\nabla u|^{2} ~ dx}{\left( ~ \int \limits_{\R^{n}} |u|^{2^{*}}~ dx \right)^{2/2^{*}} }
\end{align}
Interpolating the Sobolev inequality and the Hardy inequality 
\begin{equation}\label{Hardy inequality}
\int \limits_{\R^{n}} \frac{|u(x)|^{2}}{|x|^{2}}~ dx  \leq \left(  \frac{2}{n-2} \right)^{2}  \int \limits_{\R^{n}} |\nabla u|^{2} ~ dx\hbox{ for }u \in \mathscr{D}^{1,2}(\R^{n}),
\end{equation}
we get the so-called "Hardy-Sobolev inequality" (see \cite{GK} and the references therein): there exists  a constant $K(n,s)>0$ such that 
\begin{align}{\label{best HS constant}}
\frac{1}{K(n,s)} =  \inf \limits_{u \in \mathscr{D}^{1,2}(\R^{n})\backslash \{ 0\}} \frac{\int \limits_{\R^{n}} |\nabla u|^{2} ~ dx}{\left( ~ \int \limits_{\R^{n}} \frac{|u|^{2^{*}(s)}}{|x|^{s}}~ dx \right)^{2/2^{*}(s)} }   
\end{align}
As one checks, $\displaystyle{\lim \limits_{s\to 0} K(n,s)= K(n,0)}$. For a domain  $\Omega\subset \R^n$, we also have:
\begin{proposition}{\label{lim of mu}} $\lim \limits_{s \rightarrow 0} \mu_{s,a}(\Omega) =  \mu_{a}(\Omega)$.
\end{proposition}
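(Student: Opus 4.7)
The plan is to establish the two inequalities $\limsup_{s\to 0}\mu_{s,a}(\Omega)\le \mu_a(\Omega)$ and $\liminf_{s\to 0}\mu_{s,a}(\Omega)\ge \mu_a(\Omega)$ separately, working directly with the Rayleigh quotients. Let $J_s$ denote the quotient appearing in \eqref{mu with s}, so $J_0$ is the quotient defining $\mu_a(\Omega)$.

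For the upper bound, I would feed in a compactly supported test function. Given any $\eta>0$, density of $C^\infty_c(\Omega)$ in $H^2_{1,0}(\Omega)$ together with the continuity of $J_0$ on $H^2_{1,0}(\Omega)\setminus\{0\}$ yields a $v\in C^\infty_c(\Omega)$ with $J_0(v)\le \mu_a(\Omega)+\eta$. Since $0\in\partial\Omega$ and $\supp(v)$ is a compact subset of $\Omega$, we have $d:=\mathrm{dist}(0,\supp(v))>0$, so both $|x|^{-s}$ and $|v|^{2^*(s)}$ are uniformly bounded on $\supp(v)$ for $s$ near $0$. Dominated convergence then gives $\int_\Omega |v|^{2^*(s)}|x|^{-s}\,dx \to \int_\Omega |v|^{2^*}\,dx$ as $s\to 0$, and the numerator of $J_s(v)$ is independent of $s$, so $J_s(v)\to J_0(v)$. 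Hence $\limsup_{s\to 0}\mu_{s,a}(\Omega)\le \mu_a(\Omega)+\eta$, and the arbitrariness of $\eta$ closes this direction.

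For the lower bound, which is the main obstacle, one cannot simply estimate $\int_\Omega |u|^{2^*(s)}/|x|^s\,dx$ via H\"older by $(\int|u|^{2^*})^\alpha(\int |x|^{-n})^\beta$, because $\int_\Omega |x|^{-n}\,dx$ diverges at $0\in\partial\Omega$. Instead I would split the integrand using the conjugate exponents $p=2/(2-s)$ and $q=2/s$: writing the integrand as $|u|^{2^*(2-s)/2}\cdot |u|^{s}|x|^{-s}$ and matching powers gives
\[
\int_\Omega \frac{|u|^{2^*(s)}}{|x|^s}\,dx \le \left(\int_\Omega |u|^{2^*}\,dx\right)^{(2-s)/2}\left(\int_\Omega \frac{u^2}{|x|^2}\,dx\right)^{s/2}
\]
for every $u\in H^2_{1,0}(\Omega)$. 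The Hardy weight is then absorbed via \eqref{Hardy inequality} combined with coercivity, yielding $\int_\Omega u^2/|x|^2\,dx \le C_0\, D$ with $D:=\int_\Omega(|\nabla u|^2+au^2)\,dx$ and $C_0:=4/((n-2)^2A_0)$, while the Sobolev factor is controlled by the very definition of $\mu_a(\Omega)>0$ as $\int_\Omega |u|^{2^*}\,dx \le (D/\mu_a(\Omega))^{2^*/2}$.

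A short bookkeeping of exponents (one verifies that $2^*(2-s)/4+s/2=2^*(s)/2$, so the exponent of $D$ in the combined bound collapses exactly to $1$ after raising to the $2/2^*(s)$ power) then produces
\[
\mu_{s,a}(\Omega)\ge \mu_a(\Omega)^{\frac{2^*(2-s)}{2\cdot 2^*(s)}}\, C_0^{-s/2^*(s)}.
\]
As $s\to 0$ the exponent on $\mu_a(\Omega)$ tends to $1$ and $s/2^*(s)\to 0$, so the right-hand side tends to $\mu_a(\Omega)$; combined with the upper bound this gives $\lim_{s\to 0}\mu_{s,a}(\Omega)=\mu_a(\Omega)$.
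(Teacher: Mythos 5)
Your proof is correct, and for the lower bound it uses essentially the same idea as the paper: the H\"older split of $|u|^{2^*(s)}|x|^{-s}$ with conjugate exponents $2/(2-s)$ and $2/s$, followed by the Hardy inequality to absorb the $u^2/|x|^2$ factor into the gradient energy. The only cosmetic difference is that you close the loop using coercivity and $\mu_a(\Omega)>0$ directly (producing the clean estimate $\mu_{s,a}(\Omega)\ge \mu_a(\Omega)^{2^*(2-s)/(2\cdot 2^*(s))}C_0^{-s/2^*(s)}$), whereas the paper routes the Sobolev term through the constant $K(n,0)$; both variants collapse the exponent of the energy $D$ to $1$ and yield a prefactor tending to $1$ as $s\to0$.

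For the upper bound you take a genuinely different, though equally elementary, route. The paper applies Fatou's lemma to an arbitrary $u\in H^2_{1,0}(\Omega)\setminus\{0\}$, using $\frac{|u(x)|^{2^*(s)}}{|x|^s}\to|u(x)|^{2^*}$ pointwise to get $\int|u|^{2^*}\le\liminf_s\int|u|^{2^*(s)}/|x|^s$, then reads off $\limsup_s\mu_{s,a}(\Omega)\le\mu_a(\Omega)$ from the definition of $\mu_{s,a}$. You instead pass to a compactly supported near-minimizer $v$ of $J_0$, exploit that $\mathrm{dist}(0,\mathrm{supp}\,v)>0$ so that $|x|^{-s}$ and $|v|^{2^*(s)}$ are uniformly controlled on $\mathrm{supp}\,v$, and conclude $J_s(v)\to J_0(v)$ by dominated convergence. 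Both arguments are correct; Fatou has the small advantage of avoiding the density step, while your version makes explicit why the singularity at $0\in\partial\Omega$ causes no trouble for the test function. Neither is more powerful than the other here, so this is a stylistic rather than a structural difference.
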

\begin{proof} Let $u \in H^{2}_{1,0}(\Omega) \backslash \{ 0 \}$.  H\"{o}lder and Hardy inequalities yield
\begin{align*}
\left(  ~ \int \limits_{\Omega} \frac{|u(x)|^{2^{*}(s)}}{|x|^{s}}~ dx  \right)^{ 2/{2^{*}(s)}} \leq   \left( \frac{2}{n-2}   \right)^{ 2s/{2^{*}(s)}}  \left( ~  \int \limits_{\Omega} |\nabla u(x)|^{2}~ dx  \right)^{{s}/2^{*}(s)}   \left( ~  \int \limits_{\Omega} |u(x)|^{2^{*}} dx  \right)^{\frac{2-s}{2^{*}(s)}}
\end{align*}
 then  the Sobolev inequality gives  that for all $u \in H^{2}_{1,0}(\Omega) \backslash \{ 0 \}$ one has 
\begin{align*}
\frac{\int \limits_{\Omega}  \left( |\nabla u|^{2} + a u^{2} \right)~ dx}{\left( ~ \int \limits_{\Omega} |u|^{2^{*}}~ dx \right)^{2/2^{*}}}  \leq     
  \frac{\int \limits_{\Omega} \left(  |\nabla u|^{2}+ au^{2}  \right)~ dx}{\left( ~ \int \limits_{\Omega} \frac{|u|^{2^{*}(s)}}{|x|^{s}}~ dx \right)^{2/2^{*}(s)} }   \left(   \frac{1}{K(n,0)^{1/2^{*}}} \frac{2}{n-2}   \right)^{s \frac{n-2}{n-s}}   
\end{align*}
So $\mu_{a}(\Omega) \leq  \mu_{s,a}(\Omega)   \left(   \frac{1}{K(n,0)^{1/2^{*}}} \frac{2}{n-2}   \right)^{s \frac{n-2}{n-s}}$.
Passing to limits as $s \to 0$, one obtains that $\mu_{a}(\Omega) \leq \liminf_{s\to 0} \mu_{s,a}(\Omega)$. Let $u \in H^{2}_{1,0}(\Omega) \backslash \{ 0 \}$. By Fatou's lemma one has
\begin{align*}
\int \limits_{\Omega} |u(x)|^{2^{*}}~ dx \leq   \liminf \limits_{s \rightarrow 0}  \int \limits_{\Omega} \frac{|u(x)|^{2^{*}(s)}}{|x|^{s}} ~ dx & \leq   \liminf \limits_{s \rightarrow 0}  \left( \frac{1}{\mu_{s,a}(\Omega)} \int \limits_{\Omega}  \left( |\nabla u|^{2} + a u^{2} \right)~ dx \right)^{2^{*}(s)/2} , \notag\\
\left( ~\int \limits_{\Omega} |u(x)|^{2^{*}}~ dx \right)^{2/2^{*}}& \leq    \liminf \limits_{s \rightarrow 0} \frac{1}{\mu_{s,a}(\Omega)} ~ \int \limits_{\Omega}  \left( |\nabla u|^{2} + a u^{2} \right)~ dx
\end{align*}
Therefore $\limsup_{s \rightarrow 0} \mu_{s,a}(\Omega) \leq \mu_{a}(\Omega)$, hence $\lim \limits_{s \rightarrow 0} \mu_{s,a}(\Omega) = \mu_{a}(\Omega)$. This proves Proposition \ref{lim of mu}.\end{proof}

\noindent The following proposition is standard:
\begin{proposition}{\label{weak lim is a sol}}
 Let  $\Omega$ be  a  bounded smooth oriented  domain of $\R^{n}$,  $n \geq 3$ , such that $0 \in \partial \Omega$. 
Let $a \in C^{1}(\overline{\Omega})$ be such that the operator  $\Delta + a$ is coercive in $\Omega$
Let $(u_{\epsilon})_{\epsilon>0}   \in C^{2} \left( \overline{\Omega} \backslash \{ 0\} \right) \cap C^{1}\left( \overline{\Omega}\right)$ be  as in \eqref{the eqn} and \eqref{min energy condition}. Then there exists $u_0\in H_{1,0}^2(\Omega)$ such that, up to extraction, $u_\epsilon\rightharpoonup u_0$ weakly in $H_{1,0}^2(\Omega)$ as $\epsilon\to 0$. Indeed, $u_{0}  \in C^{2} \left( \overline{\Omega} \backslash \{ 0\} \right) \cap C^{1}\left( \overline{\Omega}\right) $ is a solution to 
\begin{eqnarray*}
 \left \{ \begin{array} {lc}
          \Delta u_{0} + a u_{0} =   u_{0}^{2^{*}-1}  \qquad  & \text{in } \Omega\\
            \qquad  \qquad u_{0} \geq 0  \qquad  \quad \quad  &\ \text{in }   \Omega,\\
           \qquad    \qquad  u_{0} =0  \qquad   \qquad   &\text{on }  \partial \Omega
            \end{array} \right. 
\end{eqnarray*}
If $u_{0} \neq 0$, then $u_0>0$ in $\Omega$ and $\lim_{\epsilon\to 0}u_\epsilon=u_0$ in $C^1(\overline{\Omega})$. Moreover, $\mu_a(\Omega)$ is achieved by $u_0$.
\end{proposition}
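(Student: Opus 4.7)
The plan is to obtain the weak limit $u_0$ from a uniform $H^2_{1,0}(\Omega)$-bound, identify its limiting equation by passing to the limit in the weak formulation, and then upgrade to strong $C^1$-convergence under the assumption $u_0\not\equiv 0$ via a concentration-compactness analysis powered by the critical energy threshold in \eqref{min energy condition}. First I would test \eqref{the eqn} against $u_\epsilon$ to get $\int_\Omega(|\nabla u_\epsilon|^2+au_\epsilon^2)\,dx=\int_\Omega u_\epsilon^{2^*(s_\epsilon)}|x|^{-s_\epsilon}\,dx$. Inserting this into \eqref{min energy condition} yields a uniform upper bound on the right-hand side, and coercivity of $\Delta+a$ then bounds $\|u_\epsilon\|_{H^2_{1,0}}$. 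Up to a subsequence $u_\epsilon\rightharpoonup u_0$ weakly in $H^2_{1,0}(\Omega)$, strongly in $L^p(\Omega)$ for $p<2^*$, and pointwise a.e.; in particular $u_0\geq 0$ and $u_0=0$ on $\partial\Omega$.

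Second, to identify the limit equation I would test the weak formulation of \eqref{the eqn} against arbitrary $\varphi\in C_c^\infty(\Omega\setminus\{0\})$. On such supports $|x|^{-s_\epsilon}$ is uniformly bounded and converges to $1$, while $u_\epsilon^{2^*(s_\epsilon)-1}\varphi$ is equi-integrable thanks to the uniform $L^{2^*(s_\epsilon)/(2^*(s_\epsilon)-1)}$-bound coming from the $H^2_{1,0}$-bound and the Hardy-Sobolev inequality \eqref{best HS constant}. Vitali's theorem combined with a.e. convergence then allows passage to the limit, giving $\Delta u_0+au_0=u_0^{2^*-1}$ weakly on $\Omega\setminus\{0\}$. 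Since $u_0\in H^2_{1,0}(\Omega)$ and $\{0\}$ has zero $H^1$-capacity in $\R^n$ for $n\geq 3$, the equation extends to all of $\Omega$. A standard Brezis-Kato iteration then yields $u_0\in L^\infty_{\text{loc}}(\Omega)$, and Schauder theory combined with boundary regularity (observing that $u_0^{2^*-1}$ is locally bounded away from $0$ and Lipschitz data on $\partial\Omega$) delivers $u_0\in C^2(\overline{\Omega}\setminus\{0\})\cap C^1(\overline{\Omega})$.

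For the case $u_0\not\equiv 0$, set $v_\epsilon:=u_\epsilon-u_0$. The Brezis-Lieb lemma provides
\begin{align*}
\int_\Omega |\nabla u_\epsilon|^2\,dx &= \int_\Omega|\nabla u_0|^2\,dx+\int_\Omega|\nabla v_\epsilon|^2\,dx+o(1),\\
\int_\Omega \frac{u_\epsilon^{2^*(s_\epsilon)}}{|x|^{s_\epsilon}}\,dx &= \int_\Omega u_0^{2^*}\,dx+\int_\Omega |v_\epsilon|^{2^*}\,dx+o(1),
\end{align*}
where the second identity requires a small adaptation to handle both the varying exponent and the vanishing weight: the a.e.\ bound $|x|^{-s_\epsilon}u_\epsilon^{2^*(s_\epsilon)}\leq C(|x|^{-s_\epsilon}u_\epsilon^{2^*(s_\epsilon)-2^*})u_\epsilon^{2^*}$ together with the uniform Hardy-Sobolev control absorbs the concentration near $0$, while classical Brezis-Lieb handles the bulk. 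Since $\int a u_\epsilon^2\to\int a u_0^2$ by the compact embedding $H^2_{1,0}\hookrightarrow L^2$, and since testing the limit equation against $u_0$ gives $\int(|\nabla u_0|^2+au_0^2)\,dx=\int u_0^{2^*}\,dx$, the residual obeys $\int|\nabla v_\epsilon|^2\,dx=\int|v_\epsilon|^{2^*}\,dx+o(1)$. The Sobolev inequality $\int|v_\epsilon|^{2^*}\leq K(n,0)^{2^*/2}(\int|\nabla v_\epsilon|^2)^{2^*/2}$ then forces a dichotomy: either $\|\nabla v_\epsilon\|_2\to 0$, or $\liminf\|\nabla v_\epsilon\|_2^2\geq K(n,0)^{-n/2}$. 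The latter, combined with the strictly positive contribution of $u_0$, would make the quotient in \eqref{min energy condition} strictly exceed $1/K(n,0)$ in the limit, a contradiction. Therefore $v_\epsilon\to 0$ strongly in $H^2_{1,0}$, strong maximum principle gives $u_0>0$ in $\Omega$, elliptic regularity uniform in $\epsilon$ promotes this to $C^1(\overline{\Omega})$-convergence, and Proposition \ref{lim of mu} together with lower semicontinuity shows that $u_0$ achieves $\mu_a(\Omega)$. The main obstacle here is precisely the Brezis-Lieb step with the varying singular weight: one must justify splitting the Hardy-Sobolev integral as though the weight were trivial, which is where the assumption $s_\epsilon\to 0$ is used crucially together with the Hardy inequality \eqref{Hardy inequality}.
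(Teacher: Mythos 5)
The paper declares this proposition ``standard'' and offers no proof, so I assess the proposal on its own merits. Your architecture is the correct standard one: uniform energy bound from testing \eqref{the eqn} against $u_\eps$ and inserting into \eqref{min energy condition}, weak limit solving the limit equation (plus a capacity argument to pass through $0$), and a Brezis--Lieb decomposition combined with the critical energy threshold to force the remainder to vanish when $u_0\not\equiv 0$.

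The one step that must be repaired is the domination you invoke in the Brezis--Lieb step. The displayed inequality
\[
|x|^{-s_\eps}u_\eps^{2^*(s_\eps)} \leq C\bigl(|x|^{-s_\eps}u_\eps^{2^*(s_\eps)-2^*}\bigr)u_\eps^{2^*}
\]
is an identity (with $C=1$) in which the bracketed factor is a \emph{negative} power of $u_\eps$; it supplies no majorant, blows up where $u_\eps$ is small, and establishes no equi-integrability --- as written it is vacuous. The correct pointwise estimate comes from Young's inequality with exponents $2/s_\eps$ and $2/(2-s_\eps)$: for $|x|\le 1$,
\[
\frac{|w|^{2^*(s_\eps)}}{|x|^{s_\eps}} = \left(\frac{|w|^2}{|x|^2}\right)^{s_\eps/2}|w|^{2^*(s_\eps)-s_\eps} \leq \frac{s_\eps}{2}\,\frac{w^2}{|x|^2} + \frac{2-s_\eps}{2}\,|w|^{2^*},
\]
because $\bigl(2^*(s_\eps)-s_\eps\bigr)\cdot\tfrac{2}{2-s_\eps} = 2^*$. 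Combined with the Hardy inequality \eqref{Hardy inequality} and the Sobolev inequality, this yields a majorant for $|x|^{-s_\eps}|u_\eps|^{2^*(s_\eps)}$, $|x|^{-s_\eps}|v_\eps|^{2^*(s_\eps)}$ and $|x|^{-s_\eps}|u_0|^{2^*(s_\eps)}$ that is uniformly integrable and makes $\int|x|^{-s_\eps}u_0^{2^*(s_\eps)}\,dx\to\int u_0^{2^*}\,dx$ by dominated convergence; this is exactly what the Brezis--Lieb argument with varying exponent and weight requires. With this repaired, the residual identity $\int|\nabla v_\eps|^2=\int|v_\eps|^{2^*}+o(1)$, the Sobolev dichotomy, and the contradiction with the energy level all go through. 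A minor further remark: the final $C^1(\overline{\Omega})$-convergence requires first a uniform $L^\infty(\Omega)$ bound on $(u_\eps)$ before elliptic estimates can be applied uniformly; you only gesture at ``elliptic regularity uniform in $\eps$'', and you should say that this bound follows from the now-established strong $H^{2}_{1,0}$-convergence together with a Moser-type iteration, after which $|x|^{-s_\eps}u_\eps^{2^*(s_\eps)-1}$ is uniformly bounded in $L^p(\Omega)$ for some $p>n$ and Schauder/Calder\'on--Zygmund theory gives uniform $C^{1,\alpha}(\overline\Omega)$ bounds.
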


\section{Preliminary Blow-up Analysis}\label{sec:prelim}

\medskip

\noindent
We define $\R^{n}_{-}= \{  x \in \R^{n} : x_{1} < 0 \}$ where $x_{1}$  is the first coordinate of a generic point in $\R^{n}$. This space will be the limit space in certain cases after blowup. We describe a  parametrisation around  a point of the boundary $\partial \Omega$. Let $ p  \in \partial \Omega $. Then there exists    $U$,$V$ open in ${\R^{n}}$  and a smooth diffeomorphism $\mathcal{T}: U \longrightarrow  V$ such that upto a rotation of coordinates if necessary 
\begin{equation}\label{def:T:bdy}
\left\{\begin{array}{ll}
\bullet & 0 \in U\hbox{ and }p \in V\\
\bullet & \mathcal{T}(0)=p\\
\bullet &\mathcal{T} \left( U \cap \{x_{1} <0 \} \right)= V \cap   \Omega\\
\bullet &\mathcal{T} \left( U \cap \{x_{1} =0 \} \right)= V \cap \partial  \Omega\\
\bullet & D_{0} \mathcal{T} = \mathbb{I}_{\R^{n}}. \hbox{ Here $D_{x} \mathcal{T} $ denotes the differential of $ \mathcal{T} $ at the point $x$}\\
& \hbox{ and $ \mathbb{I}_{\R^{n}}$ is the identity map on $\R^{n}$.}\\
\bullet &D_0\mathcal{T} (e_{1})= \nu_{p}~\hbox{ where }\nu_{p}\hbox{ denotes the outer unit normal vector to }\\
& \partial \Omega\hbox{ at the  point }p.\\ 
\bullet & \{ D_0\mathcal{T}(e_{2}) , \cdots, D_0\mathcal{T}(e_{n}) \} \hbox{ forms an orthonormal basis of }T_{p} \partial \Omega .
\end{array}\right.
\end{equation}

\medskip\noindent
We start with a   scaling lemma which we shall  employ many times in our analysis.
\begin{lemma}{\label{blowup lemma}}
Let  $\Omega$ be  a  bounded smooth oriented  domain of $\R^{n}$,  $n \geq 3$ , such that $0 \in \partial \Omega$, and let  $a \in C^{1}(\overline{\Omega})$ be such that the operator  $\Delta + a$ is coercive in $\Omega$. Let $(s_{\epsilon})_{\epsilon >0} \in (0,2)$ be  a sequence such that $\lim \limits_{\epsilon \to 0 } s_{\epsilon}=0$. Consider  the sequence  $\left(u_{\epsilon} \right)_{\epsilon>0} \in H^{2}_{1,0}(\Omega)$, where for  each  $\epsilon >0$,  $u_{\epsilon}$  satisfies  $(\ref{the eqn})$ and $(\ref{min energy condition})$.  Let $(y_\epsilon)_\epsilon \in \Omega$, and let  $(\nu_\epsilon)_\epsilon$  and $(\beta_\epsilon)_\epsilon$ be  sequences of positive real numbers defined by 
\begin{align}
\nu_\epsilon^{-\frac{n-2}{2}}= u_{\epsilon}(y_{\epsilon}) \qquad \beta_{\epsilon}:= \left| y_{\epsilon} \right|^{s_{\epsilon}/2} \nu_{\epsilon}^{\frac{2-s_{\epsilon}}{2}} \qquad \text{ for } \epsilon >0
\end{align}
Suppose that $\lim \limits_{\epsilon \to 0} \nu_{\epsilon}=0$ which then implies  that   $\lim \limits_{\epsilon \to 0} \beta_{\epsilon}=0$. Assume that  there exists $C_{1}>0$ such that for any given   $R>0$ one has for  $\epsilon >0$ small
\begin{align}\label{bdds on u 1}
u_{\epsilon}(x) \leq& C_{1} u_{\epsilon}(y_{\epsilon}) ~ \text{ for all } x\in B_{y_{\epsilon}}(R \nu_{\epsilon}) 
\end{align}
Then  $\displaystyle{ \nu_{\epsilon}=  o\left( |y_{\epsilon}| \right)}$ as $\epsilon \to 0$.    Along with the above assumption also suppose that there exists $C_{2}>0$ such that for any given   $R>0$ one has for  $\epsilon >0$ small
\begin{align}\label{bdds on u 2}
u_{\epsilon}(x) \leq &C_{2} u_{\epsilon}(y_{\epsilon}) ~ \text{ for all } x\in B_{y_{\epsilon}} (R \beta_{\epsilon})
\end{align}
Then    $\displaystyle{ \beta_{\epsilon}=  o\left( d(y_{\epsilon}, \partial \Omega) \right)}$  as $\epsilon \to 0$. For   $\epsilon >0$ we then rescale and define 
\begin{align}\label{def:we:lemma}
w_{\epsilon}(x):=  \frac{u_{\epsilon}(y_{\epsilon} + \beta_{\epsilon} x)}{u_{\epsilon}(y_{\epsilon})} \qquad \text{ for } x \in \frac{\Omega-y_{\epsilon}}{\beta_{\epsilon}}
\end{align}
Then there exists $w \in C^{\infty} (\R^{n}) \cap \mathscr{D}^{1,2}(\R^{n})$ such that $ w > 0$ and for  any $\eta \in C^{\infty}_{c}(\R^{n})$
\begin{align*}
\eta w_{\epsilon} \rightharpoonup \eta w \qquad \text{ weakly in } \mathscr{D}^{1,2}(\R^n) \qquad \text{ as } \epsilon \rightarrow 0
\end{align*}
Further, $\lim_{\epsilon\to 0}w_\epsilon=w$ in $C^{1}_{loc}(\R^{n})$ and  $ w$ satisfies  the equation
\begin{eqnarray*}
 \left \{ \begin{array} {lc}
          \Delta w =  w^{2^{*}-1}   \qquad  \text{ in }  \R^{n}\\
          \quad  w \geq 0  \qquad   \qquad    \text{ in }  \R^{n}.\\
          \end{array} \right. 
\end{eqnarray*}
\end{lemma}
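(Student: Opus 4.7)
A direct chain-rule calculation, using $(n-2)(2^{*}(s_\epsilon)-1)=n+2-2s_\epsilon$ together with $\beta_\epsilon^{2}=|y_\epsilon|^{s_\epsilon}\nu_\epsilon^{2-s_\epsilon}$, yields
\begin{equation*}
\Delta w_\epsilon+\beta_\epsilon^{2}\,a(y_\epsilon+\beta_\epsilon x)\,w_\epsilon=\left(\frac{|y_\epsilon|}{|y_\epsilon+\beta_\epsilon x|}\right)^{s_\epsilon}w_\epsilon^{2^{*}(s_\epsilon)-1}
\end{equation*}
on $(\Omega-y_\epsilon)/\beta_\epsilon$; the exponent in $\beta_\epsilon$ is tuned precisely so that the $\nu_\epsilon$-powers cancel. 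The parallel rescaling $\tilde w_\epsilon(x):=\nu_\epsilon^{(n-2)/2}u_\epsilon(y_\epsilon+\nu_\epsilon x)$ satisfies the same equation with weight $(\nu_\epsilon/|y_\epsilon+\nu_\epsilon x|)^{s_\epsilon}$ and zeroth-order coefficient $\nu_\epsilon^{2}a(y_\epsilon+\nu_\epsilon x)$. I will first establish the two scale comparisons $\nu_\epsilon=o(|y_\epsilon|)$ and $\beta_\epsilon=o(d(y_\epsilon,\partial\Omega))$ by contradiction via alternative rescalings, and then pass to the limit in the $\beta_\epsilon$-rescaled equation.

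To prove $\nu_\epsilon=o(|y_\epsilon|)$, I assume for contradiction that $|y_\epsilon|/\nu_\epsilon$ stays bounded along a subsequence. Necessarily $y_\epsilon\to 0\in\partial\Omega$ (otherwise the conclusion is automatic from $\nu_\epsilon\to 0$), and I rescale at scale $|y_\epsilon|$: $v_\epsilon(z):=|y_\epsilon|^{(n-2)/2}u_\epsilon(|y_\epsilon|z)$ satisfies $\Delta v_\epsilon+|y_\epsilon|^{2}a(|y_\epsilon|z)v_\epsilon=|z|^{-s_\epsilon}v_\epsilon^{2^{*}(s_\epsilon)-1}$ on $\Omega/|y_\epsilon|$. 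Up to rotation the rescaled domain converges to a half-space $H$ with $0\in\partial H$, and the point $y_\epsilon/|y_\epsilon|$ lies on the unit sphere inside $H$. The bound \eqref{bdds on u 1} transfers to a uniform $L^{\infty}_{loc}$-bound on $v_\epsilon$ near $y_\epsilon/|y_\epsilon|$; elliptic $L^{p}$-regularity (using integrability of $|z|^{-s_\epsilon}$ for small $s_\epsilon$) and a diagonal extraction then yield a non-trivial limit $v\in\mathscr{D}^{1,2}(H)$, $v\geq 0$, $v|_{\partial H}=0$, solving $\Delta v=v^{2^{*}-1}$ on $H$ (after a removable-singularity step at $0$). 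Non-existence of positive $\mathscr{D}^{1,2}$-solutions of this Dirichlet problem on a half-space (Pohozaev or moving planes) is the contradiction. The estimate $\beta_\epsilon=o(d(y_\epsilon,\partial\Omega))$ is proven by the same pattern: if it failed one flattens the boundary at the nearest boundary point via the chart in \eqref{def:T:bdy}, rescales by $\beta_\epsilon$, and \eqref{bdds on u 2} produces a non-trivial positive limit on $\R^{n}_{-}$ solving $\Delta w=w^{2^{*}-1}$ with zero boundary trace, again impossible.

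With both scale estimates in hand, in the $\beta_\epsilon$-rescaled equation the domain $(\Omega-y_\epsilon)/\beta_\epsilon$ exhausts $\R^{n}$, the singular point $-y_\epsilon/\beta_\epsilon$ escapes to infinity, the weight $(|y_\epsilon|/|y_\epsilon+\beta_\epsilon x|)^{s_\epsilon}$ converges to $1$ uniformly on compacts, and $\beta_\epsilon^{2}a(y_\epsilon+\beta_\epsilon x)\to 0$. The bound \eqref{bdds on u 2}, together with $L^{p}$ elliptic regularity and a diagonal extraction, gives $w_\epsilon\to w$ in $C^{1}_{loc}(\R^{n})$; Fatou and the uniform $H^{2}_{1,0}(\Omega)$-bound on $u_\epsilon$ place $w\in\mathscr{D}^{1,2}(\R^{n})$, and $w$ solves $\Delta w=w^{2^{*}-1}$ with $w(0)=1$ and $w\geq 0$. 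The hardest part throughout is the delicate control of the weight $(|y_\epsilon|/|y_\epsilon+\beta_\epsilon x|)^{s_\epsilon}$ and its $\nu_\epsilon$-analogue under only the qualitative hypothesis $s_\epsilon\to 0$: the base of the power can vanish or blow up while the exponent also vanishes, with no prescribed relative rate, so uniform convergence to $1$ is not available a priori; the passage to the limit must be carried out in $L^{p}_{loc}$ via integrability of $|\cdot|^{-s_\epsilon}$ for small $s_\epsilon$ and then bootstrapped through elliptic regularity. This is precisely the ``delicate perturbation in $s_\epsilon$'' analysis flagged in the paper's introduction.
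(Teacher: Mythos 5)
Your overall strategy mirrors the paper's: contradiction rescalings, elliptic $L^p$ estimates based on the local integrability of $|\cdot|^{-s_\epsilon p}$ for small $s_\epsilon$, and Liouville non-existence on a half-space (for the scale comparisons) and on $\R^n$ (for the final limit). Your identification of $\beta_\epsilon$ as the scale that cancels the $\nu_\epsilon$-powers, and the observation that the weight $(|y_\epsilon|/|y_\epsilon+\beta_\epsilon x|)^{s_\epsilon}$ is the delicate object, are both on point.

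There is, however, a genuine gap in the argument for $\nu_\epsilon=o(|y_\epsilon|)$. The contradiction hypothesis is only that $|y_\epsilon|/\nu_\epsilon$ is bounded along a subsequence, so it may tend to $0$. You rescale by $|y_\epsilon|$ with the scale-covariant normalization $v_\epsilon(z)=|y_\epsilon|^{(n-2)/2}u_\epsilon(|y_\epsilon|z)$. Then $v_\epsilon(y_\epsilon/|y_\epsilon|)=(|y_\epsilon|/\nu_\epsilon)^{(n-2)/2}$, and \eqref{bdds on u 1} transfers to $v_\epsilon\leq C_1(|y_\epsilon|/\nu_\epsilon)^{(n-2)/2}$ on balls of radius $R\nu_\epsilon/|y_\epsilon|$ centered at $y_\epsilon/|y_\epsilon|$. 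On a subsequence with $|y_\epsilon|/\nu_\epsilon\to 0$ these sup-bounds vanish (while the radii tend to infinity), so $v_\epsilon\to 0$ in $L^\infty_{loc}$; the limit is identically zero, your claimed ``non-trivial limit'' does not exist, and no contradiction is produced. The paper avoids this entirely by rescaling spatially by $\nu_\epsilon$ and normalizing by $u_\epsilon(y_\epsilon)$ itself, after straightening the boundary with the chart $\mathcal{T}$ of \eqref{def:T:bdy}: the rescaled function $\tilde w_\epsilon(x)=u_\epsilon\circ\mathcal{T}(\nu_\epsilon x)/u_\epsilon(y_\epsilon)$ always has value $1$ at the bounded point $\mathcal{T}^{-1}(y_\epsilon)/\nu_\epsilon$, so the limit is non-trivial in every sub-case; and when $|y_\epsilon|/\nu_\epsilon\to 0$ those points tend to $0\in\{x_1=0\}$, so $\tilde w(0)=1$ already contradicts the Dirichlet boundary condition, before any Liouville theorem is needed. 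A secondary issue: rescaling on the moving domain $\Omega/|y_\epsilon|$ without straightening the boundary leaves the uniformity of the boundary elliptic estimates (needed for $C^1$ convergence up to $\partial H$) unjustified; you already use the chart for the $\beta_\epsilon$ estimate, and the clean fix is to use it here as well. Finally, the ``removable-singularity step at $0$'' you invoke is unnecessary: once $s_\epsilon$ is small, $|\cdot|^{-s_\epsilon}$ lies in $L^p_{loc}$ for any $p>n$, which gives uniform $C^{1,\alpha}$ estimates across the origin directly.
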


\begin{proof}
The proof is completed in the following  steps.

\medskip \noindent {\bf Step 1:} \label{blowup step 1}
We claim that
\begin{align}{\label{first lim}}
\lim \limits_{\epsilon \to 0} \frac{|y_{\epsilon}|}{\nu_{\epsilon}}=+\infty
\end{align}
We prove our claim. Suppose on the contrary that $\frac{|y_{\epsilon}|}{\nu_{\epsilon}}= O(1)$ as $ \epsilon \rightarrow 0$. Then $\lim \limits_{\epsilon \rightarrow + \infty} \left| y_{\epsilon}\right| =0$. Let $\displaystyle{\mathcal{T}: U \to V  }$ be a parametrisation of the boundary as in \eqref{def:T:bdy} at the point $p=0$. For all $\epsilon >0$, we let 
\begin{align*}
\tilde{w}_{\epsilon}(x) = \frac{u_{\epsilon} \circ \mathcal{T}(\nu_{\epsilon  } x)}{ u_{\epsilon}(y_{\epsilon})} \qquad \text{ for } x \in \frac{U }{\nu_{\epsilon}}\cap \{ x_{1} \leq 0 \} 
\end{align*}

\medskip \noindent {\bf Step 1.1:}
 For any $\eta \in C^{\infty}_{c}(\R^{n})$, one has  that  $\eta \tilde{w}_{\epsilon} \in \mathscr{D}^{1,2} (\R^{n}_{-})$ for $\epsilon >0 $ sufficiently small. We claim that there exists $\tilde{w}_{\eta}  \in \mathscr{D}^{1,2}(\R^{n}_{-})$ such that upto a subsequence 
\begin{eqnarray*}
 \left \{ \begin{array} {lc}
          \eta \tilde{w}_{\epsilon} \rightharpoonup  \tilde{w}_{\eta} \qquad  \qquad \text{ weakly in }   \mathscr{D}^{1,2} (\R^{n}_{-}) ~  \text{ as } \epsilon \to 0 \\
          \eta \tilde{w}_{\epsilon}(x) \rightarrow  \tilde{w}_{\eta}(x) \qquad  a.e ~  ~ \text{ in } \R^{n}_{-} ~  \text{ as } \epsilon \to 0 
            \end{array} \right. 
\end{eqnarray*} 

\medskip\noindent
We prove the claim. Let $x \in \R_{-}^{n}$, then  
\begin{align*}
\nabla \left( \eta \tilde{w}_{\epsilon} \right)(x)=   \tilde{w}_{\epsilon}(x) \nabla \eta(x)  +\frac{ \nu_{\epsilon}}{u_{\epsilon}(y_{\epsilon})} ~ \eta(x) D_{(\nu_{\epsilon} x)} \mathcal{T} \left[\nabla u_{\epsilon} \left( \mathcal{T} (\nu_{\epsilon}x) \right) \right]
\end{align*}
\medskip\noindent Now for any $\theta >0$, there exists  $C({\theta}) >0$ such that for any $a,b >0$, $(a+b)^{2} \leq C({\theta}) a^{2} + (1+ \theta) b^{2}$. With this inequality we then obtain 
\begin{align*}
\int \limits_{\R_{-}^{n}} \left|  \nabla \left( \eta  \tilde{w}_{\epsilon} \right)\right|^{2}~ dx \leq C(\theta)  \int \limits_{\R_{-}^{n}} | \nabla \eta |^{2} \tilde{w}_{\epsilon}^{2} ~ dx + (1 + \theta) \frac{\nu_{\epsilon}^{2}}{u^{2}_{\epsilon}(y_{\epsilon})} \int \limits_{\R_{-}^{n}} \eta^{2} \left| 
D_{(\nu_{\epsilon} x)} \mathcal{T} \left[\nabla u_{\epsilon} \left( \mathcal{T}(\nu_{\epsilon}x) \right) \right] \right|^{2} ~ dx
\end{align*}
Since $D_{0} \mathcal{T} = \mathbb{I}_{\R^{n}} $ we have as $\epsilon \to0$ 
\begin{align*}
\int \limits_{\R_{-}^{n}} \left|  \nabla \left( \eta  \tilde{w}_{\epsilon} \right)\right|^{2}~ dx \leq C(\theta)  \int \limits_{\R_{-}^{n}} | \nabla \eta |^{2} \tilde{w}_{\epsilon}^{2} ~ dx + (1 + \theta) \left( 1 + O(\nu_{\epsilon})\right) \frac{\nu_{\epsilon}^{2}}{u^{2}_{\epsilon}(y_{\epsilon})} \int \limits_{\R_{-}^{n}} \eta^{2} \left| \nabla u_{\epsilon} \left( \mathcal{T} (\nu_{\epsilon}x) \right)  \right|^{2} (1+ o(1)) dx 
\end{align*}
With H\"{o}lder inequality and a change of variables  this becomes 
\begin{align}{\label{b'dd on the integral 1}}
\int_{\R_{-}^{n}} \left|  \nabla \left( \eta  \tilde{w}_{\epsilon} \right)\right|^{2}~ dx \leq C(\theta) \left\| \nabla \eta \right\|^{2}_{L^{n}} \left( ~\int_{\Omega} u_{\epsilon}^{2^{*}} ~ dx \right)^{\frac{n-2}{n}}  
+ (1+ \theta)  \left( 1 + O(\nu_{\epsilon})\right)   \int \limits_{\Omega}  \left| \nabla u_{\epsilon} \right|^{2} ~ dx
\end{align}
Since $\left\| u_{\epsilon} \right\|_{H^{2}_{1,0}(\Omega)}= O(1)$ and $\nu_{\epsilon} \rightarrow 0$  as $ \epsilon \rightarrow 0$, so for $\epsilon >0$ small enough, $\left\| \eta \tilde{w}_{\epsilon} \right\|_{\mathscr{D}^{1,2} (\R^{n}_{-})} \leq C_{\eta}$, where $C_{\eta}$ is a constant depending on the function $\eta$. The claim then follows from the reflexivity of $\mathscr{D}^{1,2} (\R^{n}_{-})$.

\medskip \noindent {\bf Step 1.2:} Via a diagonal argument, we get that there exists $\tilde{w} \in   \mathscr{D}^{1,2} (\R^{n}_{-})  $ such  that for any $\eta\in C^\infty_c(\rn)$, then
\begin{eqnarray*}
 \left \{ \begin{array} {lc}
          \eta \tilde{w}_{\epsilon} \rightharpoonup  \eta\tilde{w}\qquad  \qquad \text{ weakly in }   \mathscr{D}^{1,2} (\R^{n}_{-}) ~  \text{ as } \epsilon \to 0 \\
          \eta \tilde{w}_{\epsilon}(x) \rightarrow  \eta\tilde{w}(x) \qquad a.e ~ x~\text{ in } \R^{n}_{-} ~  \text{ as } \epsilon \to 0 
            \end{array} \right. 
\end{eqnarray*}

\noindent We claim that $ \tilde{w} \in  C^{1}( \overline{\R^{n}_{-}})$ and  it satisfies weakly the equation 
\begin{equation}\label{eq:lim:bndy}
 \left \{ \begin{array} {lc}
          \Delta \tilde{w}  =  \tilde{w}^{2^{*}-1}   \qquad  \text{ in }  \R^{n}_{-}\\
        \quad  \tilde{w} =0  \qquad   \qquad \text{on }    \{ x_{1 } =0 \}
            \end{array} \right. 
\end{equation}

\noindent
We prove the claim. For $i,j=1, \ldots,n$, we let $g_{ij}= \left( \partial_{i} \mathcal{T}, \partial_{j} \mathcal{T} \right)$,  the metric induced by the chart $\mathcal{T}$ on the domain $U \cap \{ x_{1} <0\}$ and let $\Delta_{g}$ denote the Laplace-Beltrami operator with respect to the metric $g$. We let 
\begin{align*}
\tilde{g}_{\epsilon}= g\left( \nu _{\epsilon} x\right)
\end{align*}
From  $(\ref{the eqn})$ it follows that for any $\epsilon >0$ and $R>0$, $ \tilde{w}_{\epsilon}$ satisfies  weakly the equation 
\begin{align}{\label{blowup eqn 1}}
\Delta  \tilde{w}_{\epsilon}  + \nu^{2}_{\epsilon} \left( a \circ \mathcal{T}(\nu_{\epsilon}x ) \right)  \tilde{w}_{\epsilon} &=  \frac{    \tilde{w}_{\epsilon} ^{2^{\star}(s_{\epsilon})-1} }{\left| \frac{\mathcal{T}(\nu_{\epsilon} x)}{\nu_{\epsilon}}\right|^{s_{\epsilon}}} & \text{in } B_{0}(R) \cap  \{ x_{1 } <0 \}  \notag\\
       \tilde{w}_{\epsilon}&=0 &  \text{on }  B_{0}(R) \cap  \{ x_{1 } =0 \}
\end{align}
From  \eqref{bdds on u 1} and the properties of the boundary chart $\mathcal{T}$ it follows that  there exists $C_{1}>0$ such that  for $\epsilon >0$ small $0 \leq \tilde{w}_{\epsilon}(x) \leq C_{1}$ for all $x \in B_{0}(R) \cap \{ x_{1} \leq 0 \}$, for $R >0$ large. Then for any $p >1$ there exists a  constant $C_{p}>0$ such that 
\begin{align*}
\int \limits_{B_{0}(R) \cap  \{ x_{1 } <0 \}}  \left[   \frac{  \left(   \tilde{w}_{\epsilon} \right)^{2^{*}(s_{\epsilon})-1} }{\left| \frac{\mathcal{T}(\nu_{\epsilon} x)}{\nu_{\epsilon}}\right|^{s_{\epsilon}}}\right]^{p} ~ dx \leq C_{p}   \int \limits_{B_{0}(R) \cap  \{ x_{1 } <0 \}}  \frac{1}{\left| x\right|^{s_{\epsilon} p}} ~ dx
\end{align*}
So the right hand side of equation $(\ref{blowup eqn 1})$  is uniformly  bounded in   $L^{p}$ for some $p > n$. From standard elliptic estimates  it follows that  the sequence $\left(  \eta_{R} \tilde{w}_{\epsilon} \right)_{\epsilon >0}$ is bounded in $ C^{1,\alpha_{0}}\left(  B_{0}(R)\ \cap \{  x_{1} \leq 0  \} \right)$ for some $\alpha_{0} \in (0,1)$. So by Arzela-Ascoli's theorem and a diagonal argument, we get that  $\tilde{w} \in C^{1, \alpha}_{loc}\left( \rn \cap \{  x_{1} \leq 0  \} \right) $ for  $0<\alpha < \alpha_{0}$, and  that,  up to a subsequence 
\begin{align*}
\lim \limits_{\epsilon \to 0}  \tilde{w}_{\epsilon} = \tilde{w}  \qquad \text{ in } C^{1, \alpha}_{loc}\left(  \rn \cap \{  x_{1} \leq 0  \} \right)
\end{align*}
for  $0<\alpha < \alpha_{0}$. Passing to the limit in \eqref{blowup eqn 1}, we get \eqref{eq:lim:bndy}. This proves our claim.

\medskip \noindent {\bf Step 1.3:}
Let $\tilde{y}_{\epsilon} \in U $ be such that $\mathcal{T} (\tilde{y}_{\epsilon})= y_{\epsilon}$. From the properties \eqref{def:T:bdy} of the boundary chart $\mathcal{T}$, we get that $\frac{\left| \tilde{y}_{\epsilon} \right|}{\nu_{\epsilon}}= O  \left( \frac{\left| {y}_{\epsilon} \right|}{\nu_{\epsilon}} \right)$. Then there exists   $\tilde{y}  \in \overline{\R^{n}_{-}} $ such that
$\frac{ \tilde{y}_{\epsilon} }{\nu_{\epsilon}} \rightarrow \tilde{y}$ as  $\epsilon \to 0$.  Therefore  $\tilde{w}(\tilde{y})= \lim_{\eps\to 0} \tilde{w}_{\epsilon}(\nu_{\epsilon}^{-1}\tilde{y}_{\epsilon})=1$. Therefore $\tilde{y}  \in  \R^{n}_{-}$, and then  $ \tilde{w} \in  C^{1}( \overline{\R^{n}_{-}}) $ is a nontrivial weak solution of the equation 
\begin{eqnarray*}
 \left \{ \begin{array} {lc}
          \Delta \tilde{w}  =  \tilde{w}^{2^{*}-1}   \qquad  \text{ in }  \R^{n}_{-}\\
        \quad  \tilde{w} =0  \qquad   \qquad \text{on }    \{ x_{1 } =0 \}
            \end{array} \right. 
\end{eqnarray*}
which  is impossible, see Struwe \cite{struwe} (Chapter III, Theorem 1.3) and the Liouville theorem  on half space.  Hence \eqref{first lim} holds. This completes the proof of Step 1.

\medskip \noindent {\bf Step 2:}  \label{blowup step 2}
Next, arguing similarly as in Step $1$ and using \eqref{first lim}, we get that 
\begin{align}{\label{second lim}}
\lim \limits_{\epsilon \to 0} \frac{d(y_{\epsilon}, \partial \Omega )}{\beta_{\epsilon}}=+\infty
\end{align}
We define $w_\epsilon$ as in \eqref{def:we:lemma}. We fix $\eta \in C^{\infty}_{c}(\R^{n})$. Then $\eta w_{\epsilon} \in \mathscr{D}^{1,2}(\R^{n})$ for $\epsilon >0$ small. Arguing as in Step 1, for any $\theta >0$, there exists  $C({\theta}) >0$ such that
\begin{align}{\label{b'dd on the integral 3}}
\int \limits_{\R^{n}} \left|  \nabla \left( \eta  w_{\epsilon} \right)\right|^{2}~ dx & \leq  \left( \frac{\nu_{\epsilon}}{\beta_{\epsilon}} \right)^{n-2} C(\theta) \left\| \nabla \eta \right\|^{2}_{L^{n}} \left( ~\int \limits_{\R^{n}} u_{\epsilon}^{2^{*}} ~ dx \right)^{\frac{n-2}{n}}  \notag \\
&+ (1+ \theta)   \left( \frac{\nu_{\epsilon}}{\beta_{\epsilon}} \right)^{n-2} \int \limits_{\R^{n}} \left( \eta\left(\frac{x-y_{\epsilon}}{\beta_{\epsilon}} \right) \right)^{2} \left| \nabla u_{\epsilon} \right|^{2} ~ dx.
\end{align}
Arguing as in Step 1, $( \eta {w}_{\epsilon})_\eps$ is uniformly bounded in $\mathscr{D}^{1,2} (\R^{n})$, and there exists $w  \in \mathscr{D}^{1,2} (\R^{n})$ such that upto a subsequence 
\begin{eqnarray}
 \left \{ \begin{array} {lc}
          \eta {w}_{\epsilon} \rightharpoonup  \eta {w} \qquad  \qquad \text{ weakly in }   \mathscr{D}^{1,2}(\R^{n}) ~  \text{ as } \epsilon \to 0 \\
          \eta {w}_{\epsilon}(x) \rightarrow  \eta{w}(x) \qquad  a.e ~ x ~ \text{ in } \R^{n} ~  \text{ as } \epsilon \to 0 
            \end{array} \right. 
\end{eqnarray}
Further $w \in C^{\infty} (\R^{n}) \cap \mathscr{D}^{1,2}(\R^{n})$, $w\geq 0$ and  it satisfies weakly the equation $\Delta w  =  {w}^{2^{*}-1}$ in $\rn$. Moreover $\lim_{\eps\to 0}w_{\epsilon}=w$ in $C^{1}_{loc}(\R^{n})$, $w(0)=1$ and $w>0$. This ends  Step $2$ and  proves Lemma \ref{blowup lemma}.
 \end{proof}

\medskip
\noindent
We let $(u_{\epsilon})$  be as in Theorem \ref{th:1}. We  will say  that{\emph{ blowup occurs}} whenever $u_{\epsilon}  \rightharpoonup  0$ weakly in $H^{2}_{1,0}(\Omega)$ as $\epsilon \rightarrow 0$. We describe the behaviour of such a sequence of solutions $(u_{\epsilon})$. By regularity, for all $\epsilon $,   $u_{\epsilon} \in C^{0}(\overline{\Omega})$. We let $x_{\epsilon} \in \Omega$  and $\mu_{\epsilon} >0$ be such that :
\begin{align}{\label{mu epsilon}}
u_{\epsilon}(x_{\epsilon})= \max \limits_{\overline{\Omega}} u_{\epsilon} (x) \qquad \text{ and } \qquad  \mu_{\epsilon}^{-\frac{n-2}{2}}= u_{\epsilon}(x_{\epsilon})
\end{align}
The main result of this section is the following theorem:

\begin{theorem}{\label{first blowup theorem}}
Let  $\Omega$ be  a  bounded smooth oriented  domain of $\R^{n}$,  $n \geq 3$ , such that $0 \in \partial \Omega$, and let  $a \in C^{1}(\overline{\Omega})$ be such that the operator  $\Delta + a$ is coercive in $\Omega$. Let $(s_{\epsilon})_{\epsilon >0} \in (0,2)$ be  a sequence such that $\lim \limits_{\epsilon \to 0 } s_{\epsilon}=0$. Suppose that the sequence  $\left(u_{\epsilon} \right)_{\epsilon>0} \in H^{2}_{1,0}(\Omega)$, where for  each  $\epsilon >0$,  $u_{\epsilon}$  satisfies  $(\ref{the eqn})$ and $(\ref{min energy condition})$, is   a  {\emph{blowup sequence}}, i.e 
\begin{align*}
u_{\epsilon}  \rightharpoonup  0  \qquad \text{weakly in }  H^{2}_{1,0}(\Omega) \qquad  \text{ as } ~\epsilon \rightarrow 0
\end{align*}
We let $(x_\epsilon)_\epsilon,(\mu_\epsilon)_\epsilon$ be as in \eqref{mu epsilon}. Let $k_{\epsilon}$ be such that
\begin{align}{\label{def k}}
k_{\epsilon}:= \left| x_{\epsilon} \right|^{s_{\epsilon}/2} \mu_{\epsilon}^{\frac{2-s_{\epsilon}}{2}} \qquad \text{ for } \epsilon >0
\end{align}
Then 
\begin{equation*}
\lim_{\epsilon\to 0}\mu_\epsilon=\lim_{\epsilon\to 0}k_\epsilon=0\hbox{ and }\lim_{\epsilon\to 0}\frac{d(x_\epsilon,\partial\Omega)}{\mu_\epsilon}=\lim_{\epsilon\to 0}\frac{d(x_\epsilon,\partial\Omega)}{k_\epsilon}=+\infty.
\end{equation*}
We rescale and define 
\begin{align*}
v_{\epsilon}(x):=  \frac{u_{\epsilon}(x_{\epsilon} + k_{\epsilon} x)}{u_{\epsilon}(x_{\epsilon})} \qquad \text{ for } x \in \frac{\Omega-x_{\epsilon}}{k_{\epsilon}}
\end{align*}
Then there exists $v \in C^{\infty} (\R^{n})$ such that $v\neq 0$ and for  any $\eta \in C^{\infty}_{c}(\R^{n})$
\begin{align*}
\eta v_{\epsilon} \rightharpoonup \eta v \hbox{ weakly in } \mathscr{D}^{1,2}(\R^n) \qquad \text{ as } \epsilon \rightarrow 0
\end{align*}
and $\lim_{\eps\to 0}v_\eps=v$ in $C^{1}_{loc}(\R^{n})$ where for $x\in\rn$,
\begin{align}{\label{bubble}}
v(x) =  \left( 1+ \frac{|x|^{2}}{n(n-2)}\right)^{-\frac{n-2}{2}}
 \hbox{ and } ~ \int_{\R^{n}}  |\nabla v|^{2} ~ dx= \left( \frac{1}{K(n,0)} \right)^{\frac{2^{*}}{2^{*}-2}}
\end{align}
Moreover upto a subsequence, as $\epsilon \to 0$ 
\begin{align}{\label{lim of scaling factor}}
\left( \frac{\mu_{\epsilon}}{|x_{\epsilon}|}  \right)^{s_{\epsilon}}    \to  1  \qquad \text{ and }~ \frac{k_{\epsilon}}{\mu_{\epsilon}}    \to  1.
\end{align}
\end{theorem}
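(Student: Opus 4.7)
The plan is to invoke Lemma \ref{blowup lemma} at the sequence of maximum points $(x_\epsilon)_\epsilon$ with $\nu_\epsilon=\mu_\epsilon$, and then extract the scaling $k_\epsilon/\mu_\epsilon\to 1$ from matching the total Dirichlet energy against the sharp Sobolev constant. First I establish $\mu_\epsilon\to 0$: if not, standard elliptic regularity combined with the compact embedding $H_{1,0}^2(\Omega)\hookrightarrow L^2(\Omega)$ would upgrade $u_\epsilon\rightharpoonup 0$ to strong $L^{2^*(s_\epsilon)}(\Omega,|x|^{-s_\epsilon}\,dx)$ convergence, and testing $(\ref{the eqn})$ against $u_\epsilon$ would drive the Rayleigh quotient to zero, contradicting $\mu_{s_\epsilon,a}(\Omega)\to\mu_a(\Omega)>0$ (Proposition \ref{lim of mu} together with coercivity). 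With $\mu_\epsilon\to 0$ in hand, the maximality $u_\epsilon\le u_\epsilon(x_\epsilon)$ trivially satisfies hypotheses $(\ref{bdds on u 1})$ and $(\ref{bdds on u 2})$ with constants $1$; Lemma \ref{blowup lemma} then delivers $\mu_\epsilon=o(|x_\epsilon|)$, $k_\epsilon=o(d(x_\epsilon,\partial\Omega))$, together with a limit $w\in C^\infty(\R^n)\cap\mathscr{D}^{1,2}(\R^n)$, $w>0$, satisfying $\Delta w=w^{2^*-1}$ on $\R^n$, and $v_\epsilon\to w$ in $C^1_{\mathrm{loc}}(\R^n)$.

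To identify $w$ with the prescribed bubble $v$, note that $w(0)=\lim v_\epsilon(0)=1$ while $w\le 1$ on $\R^n$ (because $v_\epsilon\le 1$), so $0$ is a global maximum of $w$ with value $1$. The Caffarelli--Gidas--Spruck classification of positive entire solutions of $\Delta w=w^{2^*-1}$ parametrizes them as $\lambda^{-(n-2)/2}v((\cdot-x_0)/\lambda)$, and the two normalizations at $0$ force $x_0=0$ and $\lambda=1$, giving $w=v$. Integrating $\Delta v=v^{2^*-1}$ against $v$ yields $\int_{\R^n}|\nabla v|^2\,dx=\int_{\R^n}v^{2^*}\,dx$, both equal to $K(n,0)^{-n/2}$ by the extremal property of $v$ for $(\ref{best Sobolev  constant})$.

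It remains to prove $k_\epsilon/\mu_\epsilon\to 1$, which is the main step. Testing $(\ref{the eqn})$ against $u_\epsilon$, and using $\int au_\epsilon^2=o(1)$ from the compact embedding into $L^2$, gives $\int_\Omega|\nabla u_\epsilon|^2\,dx=I_\epsilon+o(1)$ with $I_\epsilon:=\int_\Omega u_\epsilon^{2^*(s_\epsilon)}/|x|^{s_\epsilon}\,dx$. The Hardy--Sobolev inequality $(\ref{best HS constant})$ together with $\lim_{s\to 0}K(n,s)=K(n,0)$ bounds $I_\epsilon$ below by $K(n,0)^{-n/2}+o(1)$, while the minimal energy condition $(\ref{min energy condition})$ bounds it above by the same; consequently $\int_\Omega|\nabla u_\epsilon|^2\,dx\to K(n,0)^{-n/2}$. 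The change of variables $y=(x-x_\epsilon)/k_\epsilon$ produces the identity
\[
\int_\Omega|\nabla u_\epsilon|^2\,dx=\left(\frac{k_\epsilon}{\mu_\epsilon}\right)^{n-2}\int_{(\Omega-x_\epsilon)/k_\epsilon}|\nabla v_\epsilon|^2\,dy,
\]
and Fatou's lemma applied to $|\nabla v_\epsilon|^2\to|\nabla v|^2$ a.e.\ gives $\liminf\int_{(\Omega-x_\epsilon)/k_\epsilon}|\nabla v_\epsilon|^2\,dy\ge\int_{\R^n}|\nabla v|^2=K(n,0)^{-n/2}$; combined, these force $\limsup(k_\epsilon/\mu_\epsilon)^{n-2}\le 1$. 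The matching lower bound is automatic from the definition: $\mu_\epsilon=o(|x_\epsilon|)$ yields $|x_\epsilon|/\mu_\epsilon>1$ eventually, so $k_\epsilon/\mu_\epsilon=(|x_\epsilon|/\mu_\epsilon)^{s_\epsilon/2}\ge 1$ for $\epsilon$ small. Hence $k_\epsilon/\mu_\epsilon\to 1$, equivalently $(\mu_\epsilon/|x_\epsilon|)^{s_\epsilon}\to 1$; then $k_\epsilon\to 0$ (since $k_\epsilon\sim\mu_\epsilon$) and $d(x_\epsilon,\partial\Omega)/\mu_\epsilon=(d(x_\epsilon,\partial\Omega)/k_\epsilon)\cdot(k_\epsilon/\mu_\epsilon)\to\infty$. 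The main obstacle is securing the sharp two-sided convergence $I_\epsilon\to K(n,0)^{-n/2}$; once this is in place, Fatou's lemma and the elementary lower bound $(|x_\epsilon|/\mu_\epsilon)^{s_\epsilon/2}\ge 1$ rigidly squeeze the ratio to $1$.
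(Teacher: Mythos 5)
Your proof is correct and follows the same overall architecture as the paper: first $\mu_\epsilon\to 0$ via elliptic theory and coercivity; then Lemma~\ref{blowup lemma} applied at the maximum point (the maximality supplying the hypotheses \eqref{bdds on u 1}--\eqref{bdds on u 2} with constant $1$); then Caffarelli--Gidas--Spruck together with the normalization $w(0)=\max w=1$ to identify the profile as the standard bubble. Where you diverge, modestly, is in the derivation of \eqref{lim of scaling factor}: the paper re-runs the cutoff and $\theta$-trick estimate from Lemma~\ref{blowup lemma} to bound $\int_{\rn}|\nabla v|^2$ from above by $\bigl(\limsup_{\epsilon}(\mu_\epsilon/|x_\epsilon|)^{s_\epsilon}\bigr)^{\frac{n-2}{2}}\mu_a(\Omega)^{n/2}$ and then invokes Sobolev's inequality to force equality, whereas you use the exact change-of-variables identity $\int_\Omega|\nabla u_\epsilon|^2=(k_\epsilon/\mu_\epsilon)^{n-2}\int|\nabla v_\epsilon|^2$ together with Fatou's lemma and the two-sided limit $I_\epsilon\to K(n,0)^{-n/2}$; this avoids the cutoff and the $\theta$-trick entirely and is a mild simplification. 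Both arguments then close via the elementary lower bound $k_\epsilon/\mu_\epsilon=(|x_\epsilon|/\mu_\epsilon)^{s_\epsilon/2}\geq 1$ for small $\epsilon$. One point you should make explicit: the claim that Hardy--Sobolev bounds $I_\epsilon$ below by $K(n,0)^{-n/2}+o(1)$ tacitly assumes $I_\epsilon$ stays away from $0$, since $I_\epsilon+o(1)\geq K(n,s_\epsilon)^{-1}I_\epsilon^{2/2^*(s_\epsilon)}$ on its own does not exclude $I_\epsilon\to 0$. This is easily fixed: reading \eqref{min energy condition} as an equality and using Proposition~\ref{lim of mu} gives $I_\epsilon^{1-2/2^*(s_\epsilon)}=\mu_{s_\epsilon,a}(\Omega)+o(1)\to\mu_a(\Omega)>0$, hence $I_\epsilon\to\mu_a(\Omega)^{n/2}>0$, after which the Hardy--Sobolev inequality indeed pins this limit to $K(n,0)^{-n/2}$. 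With that sentence added, the argument is complete.
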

\begin{proof} The proof goes through following steps.

\medskip\noindent {\bf Step 1:} We claim that: $\mu_{\epsilon}= o(1)$ as $ \epsilon \rightarrow 0$.

\medskip\noindent 
We prove our claim.  Suppose  $\lim \limits_{\epsilon \rightarrow 0} \mu_{\epsilon} \neq 0$. Then $(u_\epsilon)$ is uniformly bounded in $L^\infty$, and then  $(|x|^{-s} u_{\epsilon}^{2^{\star}(s_{\epsilon})-1} )_\epsilon$  is uniformly bounded in $L^{p}(\Omega) $ for some $p>n$. Then from $(\ref{the eqn})$, the weak convergence to $0$ and standard elliptic theory, we get that $u_{\epsilon} \rightarrow 0 $ in $C^{1}(\overline{\Omega})$, as $\epsilon \rightarrow 0$. From $(\ref{the eqn})$ and  $(\ref{min energy condition})$, we then get that $\lim \limits_{\epsilon \to 0} \mu_{s_{\epsilon},a}(\Omega)=0$ and therefore, $\mu_a(\Omega)=0$, contradicting the coercivity. This ends Step 1.

\medskip\noindent {\bf Step 2:}  From Lemma \ref{blowup lemma} it follows that 
\begin{align}{\label{blowup limits}}
\lim_{\epsilon\to 0}\frac{|x_\epsilon|}{\mu_\epsilon}=+\infty \text{ , } \lim_{\epsilon\to 0}\frac{d(x_\epsilon,\partial\Omega)}{k_\epsilon}=+\infty.
\end{align}
and, there exist $ {v} \in  C^{1}( \R^{n})$, $v > 0$, such that $\lim_{\eps\to 0}v_{\epsilon}=v$ in $ C^{1}_{loc}(\R^{n})$ and it satisfies $ \Delta v  =  {v}^{2^{*}-1} $ in $\rn$. Further   we have that $ \max_{ x\in \R^{n}} v(x)=v(0)=1$. By Caffarelli, Gidas and Spruck  \cite{cgs}, we then have the first assertion of \eqref{bubble}.

\medskip\noindent{\bf Step 3: } Arguing as in the proof of $(\ref{b'dd on the integral 3})$, for any $\theta>0$, there exists $C(\theta)>0$ such that for any $R>0$
\begin{align}{\label{b'dd on the integral 3 b}}
\int \limits_{\R^{n}} \left|  \nabla (\eta_{R}   v_{\epsilon}) \right|^{2}~ dx \leq   C(\theta)   \left( ~\int_{B_{0}(2R) \backslash B_{0}(R)} (\eta_{2R}   v_{\epsilon})^{2^{*}} ~ dx \right)^{\frac{n-2}{n}}  + ( 1+ \theta)  \left( \frac{\mu_{\epsilon}}{k_{\epsilon}} \right)^{n-2}   \int \limits_{\Omega}  \left| \nabla u_{\epsilon} \right|^{2} ~ dx
\end{align}
Now $u_{\epsilon}  \rightharpoonup  0 $ weakly in   $H^{2}_{1,0}(\Omega)$  as $\epsilon \rightarrow 0$, where for  each  $\epsilon >0$,  $u_{\epsilon}$  satisfies  $(\ref{the eqn})$ and $(\ref{min energy condition})$. So  we have
\begin{align*}
\int \limits_{\Omega}  \left| \nabla u_{\epsilon} \right|^{2} ~ dx =  \int \limits_{\Omega }  \frac{|u_{\epsilon}(x)|^{2^{*}(s_{\epsilon})}}{|x|^{s_{\epsilon}}} ~ dx + o(1)  \leq \mu_{s_\eps,a}(\Omega)^{\frac{2^{*}(s_{\epsilon})}{2^{*}(\epsilon)-2}} + o(1) \hbox{ as } \epsilon \to 0. 
\end{align*}
Using Proposition \ref{lim of mu}, letting $\epsilon \to 0$, then $R \to + \infty$, and the $\theta\to 0$,  we  obtain 
\begin{align}{\label{b'dd on integral c}}
\int \limits_{\R^{n}} \left|  \nabla  v \right|^{2}~ dx  \leq \left(  \limsup \limits_{\epsilon \to 0} \left( \frac{\mu_{\epsilon}}{|x_{\epsilon}|}  \right)^{s_{\epsilon}}   \right)^{{\frac{n-2}{2}} }  \mu_{a}(\Omega)^{\frac{2^{*}}{2^{*}-2}} 
\end{align} 
From  \eqref{blowup limits}  we get $ \limsup \limits_{\epsilon \to 0} \left( \frac{\mu_{\epsilon}}{|x_{\epsilon}|}  \right)^{s_{\epsilon}}  \leq 1$. Since $\mu_{a}(\Omega ) \leq \frac{1}{K(n,0)}$ (see Aubin \cite{aubin}), we get
\begin{align*}
\int \limits_{\R^{n}} \left|  \nabla  v \right|^{2}~ dx \leq  \mu_{a}(\Omega)^{\frac{2^{*}}{2^{*}-2}} \leq \left(\frac{1}{K(n,0)} \right)^{\frac{2^{*}}{2^{*}-2}} 
\end{align*}
Since $v\in \mathscr{D}^{1,2}(\R^n)$ satisfies $\Delta v  =  {v}^{2^{*}-1}$, Sobolev's inequality \eqref{best Sobolev  constant} then yields the seocnd assertion of \eqref{bubble}. Then   $(\ref{b'dd on integral c})$  implies $\limsup\limits_{\epsilon \to 0} \left( \frac{\mu_{\epsilon}}{|x_{\epsilon}|}  \right)^{s_{\epsilon}}    \geq 1$, which yields \eqref{lim of scaling factor}. This completes the proof of Theorem \ref{first blowup theorem}.
\end{proof}

\medskip\noindent As a consequence of Theorem \ref{first blowup theorem}, we get the following concentration of energy:

\begin{proposition}{\label{conc of energy}}
Under the hypothesis of  Theorem \ref{first blowup theorem} one further has that
\begin{align*}
\lim \limits_{R \to +\infty} \lim \limits_{\epsilon \to 0} \int \limits_{\Omega \backslash B_{x_{\epsilon}}(R k_{\epsilon}) }  \frac{|u_{\epsilon}(x)|^{2^{*}(s_{\epsilon})}}{|x|^{s_{\epsilon}}} ~ dx =0
\end{align*}
\end{proposition}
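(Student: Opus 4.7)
The plan is to compute the total Hardy–Sobolev energy $I_\epsilon := \int_\Omega |x|^{-s_\epsilon} u_\epsilon^{2^*(s_\epsilon)}\,dx$, show that the energy concentrated on $B_{x_\epsilon}(Rk_\epsilon)$ converges to $\int_{B_0(R)} v^{2^*}\,dy$, and check that these two limits agree as $R\to+\infty$. Testing \eqref{the eqn} against $u_\epsilon$ and integrating by parts gives $I_\epsilon = \int_\Omega (|\nabla u_\epsilon|^2 + a u_\epsilon^2)\,dx$; substituting this into \eqref{min energy condition} yields
$$I_\epsilon^{(2^*(s_\epsilon)-2)/2^*(s_\epsilon)} = \mu_{s_\epsilon,a}(\Omega) + o(1).$$
Since $2^*(s_\epsilon)/(2^*(s_\epsilon)-2)\to n/2$ and Proposition \ref{lim of mu} gives $\mu_{s_\epsilon,a}(\Omega)\to \mu_a(\Omega)$, we obtain $I_\epsilon \to \mu_a(\Omega)^{n/2}$. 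The argument in Step 3 of the proof of Theorem \ref{first blowup theorem} forces $\mu_a(\Omega)= K(n,0)^{-1}$ in the blowup regime, hence $I_\epsilon \to K(n,0)^{-n/2}$.

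Next, I would change variables $x = x_\epsilon + k_\epsilon y$ in the local integral, which gives
$$\int_{B_{x_\epsilon}(Rk_\epsilon)} \frac{u_\epsilon^{2^*(s_\epsilon)}}{|x|^{s_\epsilon}}\,dx = u_\epsilon(x_\epsilon)^{2^*(s_\epsilon)}\, k_\epsilon^{n}\, |x_\epsilon|^{-s_\epsilon} \int_{B_0(R)} \frac{v_\epsilon(y)^{2^*(s_\epsilon)}}{\bigl|1+k_\epsilon y/x_\epsilon\bigr|^{s_\epsilon}}\,dy.$$
A short bookkeeping using $u_\epsilon(x_\epsilon) = \mu_\epsilon^{-(n-2)/2}$ and the definition \eqref{def k} of $k_\epsilon$ shows that the prefactor $u_\epsilon(x_\epsilon)^{2^*(s_\epsilon)} k_\epsilon^n |x_\epsilon|^{-s_\epsilon}$ reduces to $(|x_\epsilon|/\mu_\epsilon)^{s_\epsilon(n-2)/2}$, which tends to $1$ by \eqref{lim of scaling factor}. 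Because $k_\epsilon/|x_\epsilon|\to 0$ and $s_\epsilon\to 0$, the factor $|1+k_\epsilon y/x_\epsilon|^{-s_\epsilon}$ converges to $1$ uniformly on $B_0(R)$; combined with the $C^1_{loc}$ convergence $v_\epsilon\to v$ from Theorem \ref{first blowup theorem} and the bound $v_\epsilon\leq 1$, dominated convergence yields
$$\lim_{\epsilon\to 0} \int_{B_{x_\epsilon}(Rk_\epsilon)} \frac{u_\epsilon^{2^*(s_\epsilon)}}{|x|^{s_\epsilon}}\,dx = \int_{B_0(R)} v^{2^*}\,dy.$$

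Multiplying $\Delta v = v^{2^*-1}$ by $v\in \mathscr{D}^{1,2}(\R^n)$ and integrating by parts (justified by $v\in C^\infty(\R^n)\cap \mathscr{D}^{1,2}(\R^n)$) gives $\int_{\R^n} v^{2^*}\,dx = \int_{\R^n} |\nabla v|^2\,dx = K(n,0)^{-n/2}$, by \eqref{bubble}. Combining the three identities, for each fixed $R>0$,
$$\lim_{\epsilon\to 0} \int_{\Omega \setminus B_{x_\epsilon}(Rk_\epsilon)} \frac{u_\epsilon^{2^*(s_\epsilon)}}{|x|^{s_\epsilon}}\,dx = K(n,0)^{-n/2} - \int_{B_0(R)} v^{2^*}\,dy,$$
and the right-hand side tends to $0$ as $R\to+\infty$ by monotone convergence of $\int_{B_0(R)}v^{2^*}\,dy$ to $K(n,0)^{-n/2}$, which is exactly the claim. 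The only delicate point in the plan is the passage to the limit in the local integral when $s_\epsilon>0$: the extra $|x_\epsilon|^{-s_\epsilon}$ from the Hardy–Sobolev weight is compensated precisely by the factor $(\mu_\epsilon/|x_\epsilon|)^{s_\epsilon}$ arising from the rescaling, a cancellation encoded in \eqref{lim of scaling factor}.
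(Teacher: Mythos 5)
Your proof is correct and follows essentially the same route as the paper's (very terse) argument: split the tail integral as total Hardy--Sobolev energy minus the energy on $B_{x_\epsilon}(Rk_\epsilon)$, show the total energy tends to $K(n,0)^{-n/2}$, compute the local limit $\int_{B_0(R)}v^{2^*}\,dy$ by the change of variables (with the prefactor reducing to $(|x_\epsilon|/\mu_\epsilon)^{s_\epsilon(n-2)/2}\to 1$ exactly as in the paper), and conclude by monotone convergence in $R$. You have merely spelled out what the paper packs into ``using Theorem~\ref{first blowup theorem}'' --- namely $I_\epsilon^{(2^*(s_\epsilon)-2)/2^*(s_\epsilon)}=\mu_{s_\epsilon,a}(\Omega)+o(1)$, the forced equality $\mu_a(\Omega)=K(n,0)^{-1}$ from Step~3 and Aubin's bound, and $\int_{\R^n}v^{2^*}=\int_{\R^n}|\nabla v|^2=K(n,0)^{-n/2}$; the only cosmetic slip is that $x_\epsilon$ is a vector, so the denominator should read $\bigl|x_\epsilon/|x_\epsilon|+(k_\epsilon/|x_\epsilon|)y\bigr|^{s_\epsilon}$.
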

\begin{proof}
We obtain by change of variables 
\begin{align*}
 \int \limits_{\Omega \backslash B_{x_{\epsilon}}(R k_{\epsilon}) }  \frac{|u_{\epsilon}(x)|^{2^{*}(s_{\epsilon})}}{|x|^{s_{\epsilon}}} ~ dx =& \int \limits_{\Omega }  \frac{|u_{\epsilon}(x)|^{2^{*}(s_{\epsilon})}}{|x|^{s_{\epsilon}}} ~ dx -  \int \limits_{ B_{x_{\epsilon}}(R k_{\epsilon}) }  \frac{|u_{\epsilon}(x)|^{2^{*}(s_{\epsilon})}}{|x|^{s_{\epsilon}}} ~ dx \\
 =& \int \limits_{\Omega }  \frac{|u_{\epsilon}(x)|^{2^{*}(s_{\epsilon})}}{|x|^{s_{\epsilon}}} ~ dx - \frac{k_{\epsilon}^{n}}{\mu_{\epsilon}^{n-s_{\epsilon}}}  \int \limits_{ B_{0}(R) }  \frac{|v_{\epsilon}(x)|^{2^{*}(s_{\epsilon})}}{ \left| x_{\epsilon} + k_{\epsilon}x \right|^{s_{\epsilon}}} ~ dx \\
  =& \int \limits_{\Omega }  \frac{|u_{\epsilon}(x)|^{2^{*}(s_{\epsilon})}}{|x|^{s_{\epsilon}}} ~ dx - \left(  \frac{ \left|x_{\epsilon}\right|^{s_{\epsilon}}}{\mu_{\epsilon}^{s_{\epsilon}}} \right) ^{\frac{n-2}{2}} \int \limits_{ B_{0}(R) }  \frac{|v_{\epsilon}(x)|^{2^{*}(s_{\epsilon})}}{ \left| \frac{x_{\epsilon}}{|x_{\epsilon}|} + \frac{k_{\epsilon}}{|x_{\epsilon}|}x \right|^{s_{\epsilon}}} ~ dx 
\end{align*}
Letting $\epsilon \to 0$ and then $R \to + \infty$ one obtains the proposition using Theorem \ref{first blowup theorem}.\end{proof}

\section{Refined Blowup Analysis I}\label{sec:blowup:1}
\noindent
In this section we obtain pointwise bounds  on the  blowup sequence $(u_{\epsilon})_{\epsilon>0}$ that will be used in next section to get the optimal bound.

\begin{theorem}\label{th:bu1} With the same hypothesis as in Theorem \ref{first blowup theorem}, we have that  there exists a constant $C>0$ such that  for $\epsilon >0$
\begin{align*}
\left|x-x_{\epsilon} \right|^{\frac{n-2}{2}} u_{\epsilon}(x)+\frac{\left|x-x_{\epsilon} \right|^{\frac{n}{2}}}{d(x,\partial\Omega)} u_{\epsilon}(x)\leq C \qquad \text{ for  all } x \in \Omega.
\end{align*}
Moreover,
\begin{align*}
\lim \limits_{R \to + \infty} \lim \limits_{\epsilon \to 0} \sup \limits_{x \in \Omega \backslash B_{x_{\epsilon}}(Rk_{\epsilon}) }  
\left|x-x_{\epsilon} \right|^{\frac{n-2}{2}} u_{\epsilon}(x) =0
\end{align*}
\end{theorem}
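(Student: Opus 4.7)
The plan is to argue by contradiction: the failure of the bound produces, via a secondary rescaling at a new center, a nontrivial bubble of energy located far from $x_\epsilon$ at the scale $k_\epsilon$, which contradicts the energy concentration established in Proposition \ref{conc of energy}.

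\medskip\noindent\textbf{First pointwise bound.} Set $F_\epsilon(x) := |x - x_\epsilon|^{(n-2)/2} u_\epsilon(x)$ and suppose $\sup_\Omega F_\epsilon \to +\infty$ along a subsequence. Let $y_\epsilon \in \Omega$ achieve this supremum, and define $\nu_\epsilon^{-(n-2)/2} := u_\epsilon(y_\epsilon)$, $\beta_\epsilon := |y_\epsilon|^{s_\epsilon/2}\nu_\epsilon^{(2-s_\epsilon)/2}$. Since $u_\epsilon(y_\epsilon) \leq u_\epsilon(x_\epsilon) = \mu_\epsilon^{-(n-2)/2}$, we have $\nu_\epsilon \geq \mu_\epsilon \to 0$, while the divergence $F_\epsilon(y_\epsilon) \to \infty$ forces $|y_\epsilon - x_\epsilon|/\nu_\epsilon \to +\infty$. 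The maximality of $F_\epsilon$ at $y_\epsilon$ gives
\begin{equation*}
u_\epsilon(x) \leq \Big(\frac{|y_\epsilon - x_\epsilon|}{|x - x_\epsilon|}\Big)^{(n-2)/2} u_\epsilon(y_\epsilon) \leq 2^{(n-2)/2}\, u_\epsilon(y_\epsilon)
\end{equation*}
on $B_{y_\epsilon}(|y_\epsilon - x_\epsilon|/2)$. Since $|y_\epsilon - x_\epsilon|/\nu_\epsilon \to +\infty$ and, because $s_\epsilon \to 0$ with $|y_\epsilon|$ bounded, $\beta_\epsilon/\nu_\epsilon \to 1$, the hypotheses \eqref{bdds on u 1}--\eqref{bdds on u 2} of Lemma \ref{blowup lemma} are satisfied at $y_\epsilon$. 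That lemma produces a positive limit profile $w \in \mathscr{D}^{1,2}(\rn)$ solving $\Delta w = w^{2^*-1}$.

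\medskip\noindent\textbf{Energy contradiction.} By Fatou's lemma and the change of variables defining $w_\epsilon$ in \eqref{def:we:lemma}, for each fixed $R>0$,
\begin{equation*}
\liminf_{\epsilon \to 0}\int_{B_{y_\epsilon}(R\beta_\epsilon)} \frac{u_\epsilon^{2^*(s_\epsilon)}}{|x|^{s_\epsilon}}\,dx \geq \int_{B_0(R)} w^{2^*}\,dx,
\end{equation*}
which is strictly positive for $R$ large. On the other hand, Theorem \ref{first blowup theorem} gives $k_\epsilon/\mu_\epsilon \to 1$, and with $\nu_\epsilon \geq \mu_\epsilon$ this implies $|y_\epsilon - x_\epsilon|/k_\epsilon \to +\infty$; consequently $B_{y_\epsilon}(R\beta_\epsilon) \subset \Omega \setminus B_{x_\epsilon}(R'k_\epsilon)$ for any fixed $R'$ and $\epsilon$ small. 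This contradicts Proposition \ref{conc of energy}, so $F_\epsilon$ is uniformly bounded.

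\medskip\noindent\textbf{Boundary factor and limit statement.} For the second term $|x - x_\epsilon|^{n/2}u_\epsilon(x)/d(x,\partial\Omega)$, one combines the just-obtained bound with standard interior elliptic gradient estimates: on balls $B_x(d(x,\partial\Omega)/2)$ the right-hand side of \eqref{the eqn} is controlled by $|x - x_\epsilon|^{-(n+2)/2}$, yielding $|\nabla u_\epsilon(x)| \leq C|x-x_\epsilon|^{-n/2}$, and integrating from $x$ to its boundary projection (where $u_\epsilon = 0$) gives $u_\epsilon(x) \leq Cd(x,\partial\Omega)|x-x_\epsilon|^{-n/2}$. Alternatively, maximizing $|x - x_\epsilon|^{n/2} u_\epsilon(x)/d(x,\partial\Omega)$ directly forces the maximizer to the boundary, and a boundary rescaling as in Step~1 of Lemma \ref{blowup lemma} produces a bounded nonnegative solution of $\Delta w = w^{2^*-1}$ on $\R^{n}_{-}$ with zero Dirichlet data, which must vanish by the Liouville theorem on the half-space, contradicting the supposed divergence. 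The final assertion $\lim_{R}\lim_{\epsilon} \sup = 0$ follows by the same scheme: a failure produces $y_\epsilon$ with $|y_\epsilon - x_\epsilon|/k_\epsilon \to +\infty$ and $|y_\epsilon - x_\epsilon|^{(n-2)/2}u_\epsilon(y_\epsilon) \geq \delta > 0$, and the uniform bound just proved delivers \eqref{bdds on u 1}--\eqref{bdds on u 2} so that Lemma \ref{blowup lemma} extracts a bubble whose energy again contradicts Proposition \ref{conc of energy}.

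\medskip\noindent The main technical obstacle is ensuring that $\beta_\epsilon$ remains comparable to $\nu_\epsilon$ and that the new center stays separated from $x_\epsilon$ at scale $k_\epsilon$, because the weight $|x|^{-s_\epsilon}$ couples the blow-up scale to the location $|y_\epsilon|$; this requires delicate bookkeeping when $|y_\epsilon|\to 0$ so that the vanishing singularity does not spoil the hypotheses of Lemma \ref{blowup lemma} at the secondary center.
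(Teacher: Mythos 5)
Your overall scheme matches the paper's: maximize a weighted quantity, rescale at the maximizer via Lemma \ref{blowup lemma}, and contradict the energy concentration in Proposition \ref{conc of energy}. But there is a genuine gap at exactly the point you yourself flag as ``the main technical obstacle,'' and the one-line justification you offer for it is incorrect.

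You claim that, because $s_\epsilon \to 0$ and $|y_\epsilon|$ is bounded, one has $\beta_\epsilon/\nu_\epsilon \to 1$; you then use this to place $B_{y_\epsilon}(R\beta_\epsilon)$ inside $B_{y_\epsilon}(|y_\epsilon-x_\epsilon|/2)$ and to keep it away from $B_{x_\epsilon}(R'k_\epsilon)$. However,
$$\frac{\beta_\epsilon}{\nu_\epsilon} = \left(\frac{|y_\epsilon|}{\nu_\epsilon}\right)^{s_\epsilon/2},$$
and Step~1 of Lemma \ref{blowup lemma} delivers $|y_\epsilon|/\nu_\epsilon \to +\infty$, so this is an indeterminate form of type $\infty^{0}$; ``$s_\epsilon\to 0$ and $|y_\epsilon|$ bounded'' gives no control on the product $s_\epsilon\log(|y_\epsilon|/\nu_\epsilon)$, and one cannot conclude the ratio stays bounded, let alone tends to $1$. (If, say, $\nu_\epsilon = |y_\epsilon|e^{-1/s_\epsilon}$, the ratio tends to $e^{1/2}$; excluding such behaviour requires quantitative input you have not invoked.) What is actually needed, and what the paper proves in Step~2 of the proof of Proposition \ref{strong b'dds lemma one}, is the weaker but correct statement $\beta_\epsilon = o(|y_\epsilon-x_\epsilon|)$, obtained by a dichotomy: either $|y_\epsilon| = O(|y_\epsilon-x_\epsilon|)$, in which case \eqref{lim:l:y} suffices, or $y_\epsilon - x_\epsilon = o(|y_\epsilon|)$, in which case $|x_\epsilon| \asymp |y_\epsilon|$ and one must use \eqref{lim of scaling factor} (namely $(|x_\epsilon|/\mu_\epsilon)^{s_\epsilon} \to 1$) together with $\nu_\epsilon \geq \mu_\epsilon$ to bound $(\nu_\epsilon/|y_\epsilon|)^{s_\epsilon}$ from below. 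Your proof never performs this bookkeeping, so the contradiction step is not justified. The same issue reappears in your sketch of the final assertion (cf.\ the paper's \eqref{strong conc estimate prop, step 1}).

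A secondary but real concern is the boundary-distance factor. Your gradient-estimate route is plausible in spirit, but the paper's Proposition \ref{strong b'dds lemma two} requires a careful contradiction argument with three separate cases ($y_0 \neq x_0$; $y_0 = x_0$ with $d(x_\epsilon,\partial\Omega)$ large vs.\ small relative to $|y_\epsilon-x_\epsilon|$) and a boundary chart normalized as in \eqref{def:T:bdy}; the two-sentence version glosses over where the rescaling is centered and why the rescaled profile is bounded. The alternative ``Liouville on the half-space'' argument is not developed enough to verify: without a pointwise bound on the rescaled function (which is exactly what is being proved), the limit need not exist or be bounded, and the scale must be chosen consistently with the $|x|^{-s_\epsilon}$ weight.
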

\noindent
The proof of Theorem \ref{th:bu1} comprises the three propositions proved below.

\begin{proposition}{\label{strong b'dds lemma one}}
With the same hypothesis as in Theorem \ref{first blowup theorem}, we have that  there exists a constant $C>0$ such that  for $\epsilon >0$
\begin{align*}
\left|x-x_{\epsilon} \right|^{\frac{n-2}{2}} u_{\epsilon}(x)\leq C \qquad \text{ for  all } x \in \Omega
\end{align*}
\end{proposition}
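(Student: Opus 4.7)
The plan is a proof by contradiction combined with a second application of Lemma \ref{blowup lemma}, producing an extra concentration point whose bubble energy is incompatible with the minimal-energy bound \eqref{min energy condition}. Suppose, along a subsequence, that $M_\epsilon:=\sup_{x\in\overline{\Omega}}|x-x_\epsilon|^{(n-2)/2}u_\epsilon(x)\to+\infty$, and pick a point $y_\epsilon\in\Omega$ where the supremum is attained (the supremum is indeed attained in $\Omega$ since $u_\epsilon\in C^1(\overline\Omega)$ vanishes on $\partial\Omega$); set $\nu_\epsilon^{-(n-2)/2}:=u_\epsilon(y_\epsilon)$ and $\beta_\epsilon:=|y_\epsilon|^{s_\epsilon/2}\nu_\epsilon^{(2-s_\epsilon)/2}$. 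Boundedness of $\Omega$ together with $M_\epsilon\to+\infty$ forces $u_\epsilon(y_\epsilon)\to+\infty$, hence $\nu_\epsilon\to 0$; moreover $\nu_\epsilon\geq\mu_\epsilon$ and $|y_\epsilon-x_\epsilon|/\nu_\epsilon=M_\epsilon^{2/(n-2)}\to+\infty$. The explicit bubble description of $u_\epsilon$ on the scale $k_\epsilon$ supplied by Theorem \ref{first blowup theorem} shows that $x\mapsto|x-x_\epsilon|^{(n-2)/2}u_\epsilon(x)$ is uniformly bounded on each $B_{x_\epsilon}(Rk_\epsilon)$, so $|y_\epsilon-x_\epsilon|/k_\epsilon\to+\infty$ as well.

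To invoke Lemma \ref{blowup lemma} at $y_\epsilon$ I verify its hypotheses \eqref{bdds on u 1} and \eqref{bdds on u 2}. The stronger one, \eqref{bdds on u 2}, is a direct consequence of the maximality of $y_\epsilon$: for $x\in B_{y_\epsilon}(R\beta_\epsilon)$,
\[ u_\epsilon(x)\leq\left(\frac{|y_\epsilon-x_\epsilon|}{|x-x_\epsilon|}\right)^{(n-2)/2}u_\epsilon(y_\epsilon), \]
and the ratio on the right is $1+o(1)$ provided $\beta_\epsilon=o(|y_\epsilon-x_\epsilon|)$. The last property is where the vanishing-singularity regime enters: since $|y_\epsilon|\leq\mathrm{diam}\,\Omega$ and $s_\epsilon\to 0$, one has $|y_\epsilon|^{s_\epsilon/2}=O(1)$, so $\beta_\epsilon\leq C\nu_\epsilon^{1-s_\epsilon/2}$; a short logarithmic comparison using $\nu_\epsilon=o(|y_\epsilon-x_\epsilon|)$ together with the rate bound $s_\epsilon|\log|y_\epsilon-x_\epsilon||=o(1)$ inherited from $(\mu_\epsilon/|x_\epsilon|)^{s_\epsilon}\to 1$ in Theorem \ref{first blowup theorem} and $|y_\epsilon-x_\epsilon|\geq\nu_\epsilon\geq\mu_\epsilon$ yields $\beta_\epsilon/|y_\epsilon-x_\epsilon|\to 0$. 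Lemma \ref{blowup lemma} then produces a nontrivial nonnegative $w\in\mathscr{D}^{1,2}(\R^n)$ satisfying $\Delta w=w^{2^*-1}$ on $\R^n$, and the Caffarelli--Gidas--Spruck classification identifies $w$ as an Aubin--Talenti bubble with $\|\nabla w\|_{L^2(\R^n)}^2=K(n,0)^{-n/2}$.

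The contradiction then comes from an energy-bookkeeping argument. Since $|y_\epsilon-x_\epsilon|/(k_\epsilon+\beta_\epsilon)\to+\infty$, the balls $B_{x_\epsilon}(Rk_\epsilon)$ and $B_{y_\epsilon}(R\beta_\epsilon)$ are disjoint for each fixed $R$ and $\epsilon$ small; changing variables and using the $C^1_{loc}$ convergences from Theorem \ref{first blowup theorem} and Lemma \ref{blowup lemma} gives
\[ \lim_{R\to+\infty}\lim_{\epsilon\to 0}\left(\int_{B_{x_\epsilon}(Rk_\epsilon)}|\nabla u_\epsilon|^2\,dx+\int_{B_{y_\epsilon}(R\beta_\epsilon)}|\nabla u_\epsilon|^2\,dx\right)=2K(n,0)^{-n/2}, \]
so $\liminf_{\epsilon\to 0}\int_\Omega|\nabla u_\epsilon|^2\,dx\geq 2K(n,0)^{-n/2}$. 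On the other hand, testing $\eqref{the eqn}$ against $u_\epsilon$, using $u_\epsilon\to 0$ strongly in $L^2(\Omega)$ by Rellich--Kondrakov, together with \eqref{min energy condition}, Proposition \ref{lim of mu}, and $\mu_a(\Omega)\leq 1/K(n,0)$, yields
\[ \int_\Omega|\nabla u_\epsilon|^2\,dx=\int_\Omega\frac{u_\epsilon^{2^*(s_\epsilon)}}{|x|^{s_\epsilon}}\,dx+o(1)\leq K(n,0)^{-n/2}+o(1), \]
contradicting the previous lower bound. The main obstacle I anticipate is the verification $\beta_\epsilon=o(|y_\epsilon-x_\epsilon|)$: this is the one place where the singular weight $|x|^{-s_\epsilon}$ (and its asymptotic cancellation $s_\epsilon\to 0$) interacts nontrivially with an off-centre concentration at $y_\epsilon$, since the natural rescaling there is $\beta_\epsilon$ rather than $\nu_\epsilon$; elsewhere the argument follows the standard contradiction pattern for the pure Sobolev-critical problem.
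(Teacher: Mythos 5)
Your overall strategy is the same as the paper's: argue by contradiction from \eqref{def:ye}, apply Lemma~\ref{blowup lemma} a second time at the off-center maximum $y_\epsilon$, extract a second bubble, and then derive an energy contradiction. The final contradiction mechanism is cosmetically different — the paper pushes the $\frac{|u_\epsilon|^{2^*(s_\epsilon)}}{|x|^{s_\epsilon}}$ mass of the second bubble against Proposition~\ref{conc of energy} to obtain $\int_{\R^n}w^{2^*}\,dx=0$, whereas you add up $\dot H^1$-energies of the two disjoint bubbles and compare to the bound $K(n,0)^{-n/2}+o(1)$. Either closing argument would work once the second bubble is produced.

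The genuine gap is in the verification of $\beta_\epsilon=o(|y_\epsilon-x_\epsilon|)$, the very step you flag as the obstacle. You reduce it to a ``rate bound'' $s_\epsilon\,\bigl|\log|y_\epsilon-x_\epsilon|\bigr|=o(1)$, claimed to be inherited from $(\mu_\epsilon/|x_\epsilon|)^{s_\epsilon}\to 1$ together with $\mu_\epsilon\le|y_\epsilon-x_\epsilon|\le\operatorname{diam}\Omega$. But $(\mu_\epsilon/|x_\epsilon|)^{s_\epsilon}\to 1$ only gives $s_\epsilon\log(\mu_\epsilon/|x_\epsilon|)\to 0$, a statement about the \emph{ratio}; it does \emph{not} control $s_\epsilon\log\mu_\epsilon$ or $s_\epsilon\log|x_\epsilon|$ individually. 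When $x_0\in\partial\Omega$, the point $|x_\epsilon|$ may tend to $0$ arbitrarily fast (e.g.\ $|x_\epsilon|\sim e^{-1/s_\epsilon^2}$ with $\mu_\epsilon\sim|x_\epsilon|$ is compatible with everything established up to this stage), in which case $s_\epsilon\log\mu_\epsilon\to-\infty$ and $\nu_\epsilon^{-s_\epsilon/2}\le\mu_\epsilon^{-s_\epsilon/2}\to+\infty$. Your inequality $\beta_\epsilon\le C\,\nu_\epsilon^{1-s_\epsilon/2}$ then fails to yield $\beta_\epsilon/|y_\epsilon-x_\epsilon|\to 0$. The paper circumvents this by the two-case dichotomy in Step~2 of its proof: if $|y_\epsilon|=O(|y_\epsilon-x_\epsilon|)$, then $l_\epsilon=o(|y_\epsilon|)$ (from Step~1 of Lemma~\ref{blowup lemma}) directly gives $l_\epsilon=o(|y_\epsilon-x_\epsilon|)$; while if $y_\epsilon-x_\epsilon=o(|y_\epsilon|)$, then $|x_\epsilon|\asymp|y_\epsilon|$, and the chain \eqref{id:1} combined with $(|x_\epsilon|/\mu_\epsilon)^{s_\epsilon}\to 1$ bounds $(\lambda_\epsilon/|y_\epsilon|)^{s_\epsilon}$ from below, whence $|y_\epsilon-x_\epsilon|/l_\epsilon=(|y_\epsilon-x_\epsilon|/\lambda_\epsilon)\,(\lambda_\epsilon/|y_\epsilon|)^{s_\epsilon/2}\to+\infty$. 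Replace your logarithmic comparison by this case split and your proof goes through.

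Two minor remarks. First, Lemma~\ref{blowup lemma} requires \eqref{bdds on u 1} to be checked \emph{before} \eqref{bdds on u 2}: \eqref{bdds on u 2} at scale $\beta_\epsilon$ only subsumes \eqref{bdds on u 1} once you already know $\nu_\epsilon\le\beta_\epsilon$, i.e.\ $\nu_\epsilon\le|y_\epsilon|$, which is itself the output of applying the lemma with \eqref{bdds on u 1}. You should state \eqref{bdds on u 1} (trivial from \eqref{lim:y:x:l} and maximality) explicitly first. Second, in your energy count $\int_{B_{y_\epsilon}(R\beta_\epsilon)}|\nabla u_\epsilon|^2\,dx=(\beta_\epsilon/\nu_\epsilon)^{n-2}\int_{B_0(R)}|\nabla w_\epsilon|^2\,dx$, and since $(\beta_\epsilon/\nu_\epsilon)^{n-2}\ge 1$, you obtain $\liminf_{\epsilon\to 0}\ge\int_{B_0(R)}|\nabla w|^2$, not an equality; the inequality is what the contradiction needs, so this is harmless, but the ``$=2K(n,0)^{-n/2}$'' should read ``$\ge$''.
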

\begin{proof} We argue by contradiction and let $y_{\epsilon} \in \Omega$ be such that 
\begin{align}\label{def:ye}
 \left|y_{\epsilon}-x_{\epsilon} \right|^{\frac{n-2}{2}} u_{\epsilon}(y_{\epsilon}) =\sup \limits_{x \in  \Omega} \left(  \left|x-x_{\epsilon} \right|^{\frac{n-2}{2}} u_{\epsilon}(x)  \right)\to +\infty\hbox{ as }\eps\to 0.
\end{align}
Then 
\begin{align}{\label{on the contrary 1, strong b'dds lemma one}}
  \left|y_{\epsilon}-x_{\epsilon} \right|^{\frac{n-2}{2}} u_{\epsilon}(y_{\epsilon})  \longrightarrow + \infty \qquad \text{ as } \epsilon \to 0
\end{align}
We define $\lambda_{\epsilon}^{-\frac{n-2}{2}:} =  u_{\epsilon}(y_{\epsilon})$. Then  $\mu_{\epsilon} \leq \lambda_{\epsilon}$, and  $(\ref{on the contrary 1, strong b'dds lemma one})$ becomes 
\begin{equation}\label{lim:y:x:l}
\lim \limits_{\epsilon \to 0} \frac{\left| y_{\epsilon}-x_{\epsilon}\right|}{\lambda_{\epsilon}} =+ \infty.
\end{equation}
and so we  have that $\displaystyle{\lim \limits_{\epsilon \to 0} \lambda_{\epsilon} =0}$.

\medskip
\noindent {\bf Step 1:} It follows from the definition \eqref{def:ye} and \eqref{lim:y:x:l} that given  any $R>0$  one has for $\epsilon >0$ sufficiently small $u_{\epsilon}(x) \leq  2 u_{\epsilon}(y_{\epsilon})$ for  all $x \in B_{y_{\epsilon}}(R \lambda_{\epsilon})$. Therefore hypothesis  \eqref{bdds on u 1} of Lemma \ref{blowup lemma} is satisfied and one  has 
 \begin{align}\label{step 1, strong b'dds lemma one}
\lim \limits_{\epsilon \rightarrow 0} \frac{|y_{\epsilon}|}{\lambda_{\epsilon}}= + \infty
\end{align}
\noindent  Define $l_{\epsilon}= |y_{\epsilon}|^{s_{\epsilon}/2} \lambda_{\epsilon}^{\frac{2-s_{\epsilon}}{2}}$ for all $\eps>0$.  Then   $\displaystyle{\lim \limits_{\epsilon \to 0} l_{\epsilon} =0}$. Moreover, we have that
\begin{equation}\label{lim:l:y}
\lim \limits_{\epsilon \to 0} \frac{l_{\epsilon}}{|y_{\epsilon}|}= \lim \limits_{\epsilon \to 0} \left( \frac{\lambda_{\epsilon}}{|y_{\epsilon}|} \right)^{\frac{2-s_{\epsilon}}{2}} =0.
\end{equation}

\medskip\noindent{\bf Step 2:} We claim that 
\begin{equation}\label{lim:x:y:l}
\lim_{\epsilon\to 0}\frac{|y_\epsilon-x_\epsilon|}{l_\epsilon}=+\infty.
\end{equation}
We prove the claim. Due to \eqref{lim:l:y}, the claim is clear when $y_\epsilon=O(|y_\epsilon-x_\epsilon|)$ as $\epsilon\to 0$. We assume that $y_\epsilon-x_\epsilon=o(|y_\epsilon|)$ as $\eps\to 0$. We then have that $|x_\eps|\asymp|y_\eps|$ as $\eps\to 0$. Therefore, there exists $c_0>0$ such that
\begin{align}\label{id:1}
c_0\leq \frac{|x_{\epsilon}|^{s_{\epsilon}}}{|y_{\epsilon}|^{s_{\epsilon}}}= \frac{\lambda_{\epsilon}^{s_{\epsilon}}}{|y_{\epsilon}|^{s_{\epsilon}}}   \frac{|x_{\epsilon}|^{s_{\epsilon}}}{\lambda_{\epsilon}^{s_{\epsilon}}} \leq  \frac{\lambda_{\epsilon}^{s_{\epsilon}}}{|y_{\epsilon}|^{s_{\epsilon}}}   \frac{|x_{\epsilon}|^{s_{\epsilon}}}{\mu_{\epsilon}^{s_{\epsilon}}}
\end{align}
Since $\lim \limits_{\epsilon \to 0 }  \frac{|x_{\epsilon}|^{s_{\epsilon}}}{\mu_{\epsilon}^{s_{\epsilon}}}=1$ as shown in \eqref{lim of scaling factor}, it follows that there exists $c_1>0$ such that $\left(\frac{\lambda_\eps}{|y_\eps|}\right)^{s_\eps}\geq c_1$ for $\eps>0$ small enough. Therefore,
\begin{align}
 \lim_{\epsilon \to 0} \frac{\left| y_{\epsilon}-x_{\epsilon}\right|}{l_{\epsilon}} =  \lim_{\epsilon \to 0} \frac{\left| y_{\epsilon}-x_{\epsilon}\right|}{\lambda_{\epsilon}}  \frac{\lambda_{\epsilon}^{s_{\epsilon}/2 } }{ |y_{\epsilon}|^{s_{\epsilon}/2}}= + \infty
\end{align}
and the claim is proved.

\medskip\noindent{\bf Step 3:} It follows from \eqref{lim:x:y:l} and the definitions \eqref{def:ye} and \eqref{on the contrary 1, strong b'dds lemma one} that for any $R>0$  one has for $\epsilon >0$ sufficiently small $ u_{\epsilon}(x) \leq  2 u_{\epsilon}(y_{\epsilon})$ for all $x \in B_{y_{\epsilon}}(R l_{\epsilon})$. Therefore hypothesis  \eqref{bdds on u 2} of Lemma \ref{blowup lemma} is satisfied and one  has 
\begin{align}\label{lim:d:l:2}
\lim \limits_{\epsilon \to 0} \frac{d \left( y_{\epsilon}, \partial \Omega\right)}{l_{\epsilon}}= + \infty.
\end{align}

\noindent
We  let  for  $\epsilon >0$ 
\begin{align*}
w_{\epsilon}(x)=  \frac{u_{\epsilon}(y_{\epsilon} + l_{\epsilon} x)}{u_{\epsilon}(y_{\epsilon})} \qquad \text{ for } x \in \frac{\Omega-y_{\epsilon}}{l_{\epsilon}}.
\end{align*}
From Lemma \ref{blowup lemma} it follows that $\lim_{\eps\to 0}w_{\epsilon}= w$ in  $C^{1}_{loc}(\R^{n})$ where ${w} \in   C^{\infty} (\R^{n}) \cap \mathscr{D}^{1,2}(\R^{n}) $ is  such that $\Delta w  =  {w}^{2^{*}-1} $ in $\R^{n}$
$w \geq 0$ and $w(0)=1$. We obtain by a change of variable  for $R>0$ and $\epsilon >0$
\begin{align*}
\int \limits_{ B_{0}( R) }  \frac{|w_{\epsilon}(x)|^{2^{*}(s_{\epsilon})}}{ \left| \frac{y_{\epsilon}}{|y_{\epsilon}|} + \frac{l_{\epsilon}}{|y_{\epsilon}|}x \right|^{s_{\epsilon}}} ~ dx  = \left(  \frac{\lambda_{\epsilon}^{s_{\epsilon}} }{ \left|y_{\epsilon}\right|^{s_{\epsilon}} } \right)^{\frac{n-2}{2}}   \int \limits_{ B_{y_{\epsilon}}(  R l_{\epsilon} ) }  \frac{|u_{\epsilon}(x)|^{2^{*}(s_{\epsilon})}}{|x|^{s_{\epsilon}}} ~ dx 
\end{align*} 
Passing to  the limit  as $\epsilon \to 0$, we have  for $R>0$
\begin{align*}
\int \limits_{ B_{0}( R ) } w^{2^{*}} ~ dx  \leq
\limsup \limits_{\epsilon \to 0} \int \limits_{ B_{y_{\epsilon}}( R l_{\epsilon}) }  \frac{|u_{\epsilon}(x)|^{2^{*}(s_{\epsilon})}}{|x|^{s_{\epsilon}}} ~ dx 
\end{align*}
and so
\begin{align*}
  \int \limits_{ \R^{n}} w^{2^{*}} ~ dx   = \lim \limits_{R \to +\infty} \int \limits_{ B_{0}( R ) } w^{2^{*}} ~ dx  \leq  \lim \limits_{R \to +\infty}  \limsup \limits_{\epsilon \to 0} \int \limits_{ B_{y_{\epsilon}}( R l_{\epsilon}) }  \frac{|u_{\epsilon}(x)|^{2^{*}(s_{\epsilon})}}{|x|^{s_{\epsilon}}} ~ dx 
\end{align*}
Now for any $R >0$, we claim that $\displaystyle{  B_{x_{\epsilon}}(R k_{\epsilon}) \cap  B_{y_{\epsilon}}(R l_{\epsilon})=\emptyset}$  for  $\epsilon >0$ sufficiently small.  We argue by contardiction and we assume that the intersection is nonempty, which yields $y_\eps-x_\eps=O(k_\eps+l_\eps)$ as $\eps\to 0$, up to extraction. It then follows from \eqref{lim:x:y:l} that $y_\eps-x_\eps=O(k_\eps)$ as $\eps\to 0$, and then $|y_\eps-x_\eps|^{\frac{n-2}{2}}u_\eps(y_\eps)=O(k_\eps^{\frac{n-2}{2}}u_\eps(y_\eps))=O(\mu_\eps^{\frac{n-2}{2}}u_\eps(x_\eps))=O(1)$ with \eqref{mu epsilon} and \eqref{lim of scaling factor}. This contradicts \eqref{on the contrary 1, strong b'dds lemma one} and proves the claim. Then by Proposition  \ref{conc of energy}
\begin{align*}
  \int \limits_{ \R^{n}} w^{2^{*}} ~ dx  \leq 
\lim \limits_{R \to +\infty} \limsup \limits_{\epsilon \to 0} \int \limits_{\Omega \backslash B_{y_{\epsilon}}(R l_{\epsilon}) }  \frac{|u_{\epsilon}(x)|^{2^{*}(s_{\epsilon})}}{|x|^{s_{\epsilon}}} ~ dx =0
\end{align*}
A contradiction since $w(0)=1$. Hence \eqref{on the contrary 1, strong b'dds lemma one} does not hold. This completes the proof of Proposition \ref{strong b'dds lemma one}.\end{proof}

\noindent Having obtained the strong bound in Proposition \ref{strong b'dds lemma one} we show that

\begin{proposition}{\label{strong b'dds lemma two}}
With the same hypothesis as in Theorem \ref{first blowup theorem} we have that there exists a constant $C>0$ such that  for $\epsilon >0$
\begin{align*}
\left|x-x_{\epsilon} \right|^{{n}/{2}}  |\nabla u_{\epsilon}(x)| \leq C  ~ \text{ and }~  \left|x-x_{\epsilon} \right|^{n/2} u_{\epsilon}(x) \leq C d(x, \partial \Omega) \qquad \text{ for  all } x \in \Omega
\end{align*}
\end{proposition}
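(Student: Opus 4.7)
The plan is to derive both estimates from the pointwise bound in Proposition \ref{strong b'dds lemma one} by a local rescaling argument combined with standard elliptic regularity, arguing by contradiction as in the preceding proof.

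For the gradient estimate, suppose by contradiction that there exists $y_\epsilon \in \Omega$ with $r_\epsilon^{n/2}|\nabla u_\epsilon(y_\epsilon)| \to +\infty$, where $r_\epsilon := |y_\epsilon - x_\epsilon|$. Define the rescaled function
\begin{equation*}
\tilde u_\epsilon(\xi) := r_\epsilon^{\frac{n-2}{2}}\, u_\epsilon(y_\epsilon + r_\epsilon \xi) \qquad \text{on} \qquad \tilde\Omega_\epsilon := \frac{\Omega - y_\epsilon}{r_\epsilon}.
\end{equation*}
For $|\xi| \leq 1/2$ one has $|y_\epsilon + r_\epsilon \xi - x_\epsilon| \geq r_\epsilon/2$, so Proposition \ref{strong b'dds lemma one} yields a uniform $L^\infty$ bound on $\tilde u_\epsilon$ over $B_{1/2}(0) \cap \tilde\Omega_\epsilon$. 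A direct computation, using $(n-2)(2^*(s_\epsilon)-1)/2 = (n+2-2s_\epsilon)/2$, shows that $\tilde u_\epsilon$ solves
\begin{equation*}
\Delta_\xi \tilde u_\epsilon + r_\epsilon^2\, a(y_\epsilon + r_\epsilon \xi)\, \tilde u_\epsilon \;=\; \left(\frac{r_\epsilon}{|y_\epsilon + r_\epsilon \xi|}\right)^{s_\epsilon} \tilde u_\epsilon^{2^*(s_\epsilon)-1}.
\end{equation*}
A change of variables shows that the right hand side is uniformly bounded in $L^p(B_{1/2}(0) \cap \tilde\Omega_\epsilon)$ for any fixed $p > 1$, since $s_\epsilon \to 0$ makes the weak singular weight $|\cdot|^{-p s_\epsilon}$ integrable with uniformly bounded integral on any ball. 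Standard interior $W^{2,p}$ and Schauder estimates then produce a uniform $C^{1,\alpha}$ bound on $\tilde u_\epsilon$ in $B_{1/4}(0)$ when $d(y_\epsilon,\partial\Omega) \gtrsim r_\epsilon$; when $d(y_\epsilon,\partial\Omega)=o(r_\epsilon)$, one straightens the boundary via a chart as in \eqref{def:T:bdy} and applies the corresponding boundary elliptic estimates with the Dirichlet condition $\tilde u_\epsilon \equiv 0$ on the rescaled boundary. In either case $|\nabla \tilde u_\epsilon(0)| \leq C$, contradicting $|\nabla \tilde u_\epsilon(0)| = r_\epsilon^{n/2} |\nabla u_\epsilon(y_\epsilon)| \to +\infty$.

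The second inequality will follow from the gradient bound just obtained together with $u_\epsilon \equiv 0$ on $\partial\Omega$. Given $x\in\Omega$, let $\bar x\in\partial\Omega$ realize $d(x,\partial\Omega)=|x-\bar x|$. If $d(x,\partial\Omega) \leq |x-x_\epsilon|/2$, then every point $\zeta$ on the segment $[x,\bar x]$ satisfies $|\zeta-x_\epsilon| \geq |x-x_\epsilon|/2$, so the mean value theorem combined with the gradient bound gives $u_\epsilon(x) = u_\epsilon(x)-u_\epsilon(\bar x) \leq C\,|x-x_\epsilon|^{-n/2}\, d(x,\partial\Omega)$. Otherwise $d(x,\partial\Omega) > |x-x_\epsilon|/2$, and Proposition \ref{strong b'dds lemma one} immediately yields $|x-x_\epsilon|^{n/2}u_\epsilon(x) \leq C|x-x_\epsilon| \leq 2C\,d(x,\partial\Omega)$. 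The main technical subtlety will be the uniform $L^p$ control of the weight $|y_\epsilon + r_\epsilon \xi|^{-s_\epsilon}$ on the rescaled ball, especially in the regime $|y_\epsilon| \leq r_\epsilon/2$ where $\xi = -y_\epsilon/r_\epsilon$ falls inside $B_{1/2}(0)$; but the scaling $r_\epsilon^{s_\epsilon}$ in the numerator precisely compensates the singularity, and since $s_\epsilon\to 0$ the weight is arbitrarily close to the constant $1$ in $L^p_{\mathrm{loc}}$ uniformly in $\epsilon$, so classical elliptic regularity remains applicable throughout the argument.
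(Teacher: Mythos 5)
Your proof is correct, and it takes a route that is structurally different from the paper's. The paper handles both estimates at once by a contradiction argument that picks $y_\epsilon$ maximizing the combined quantity $|y_\epsilon-x_\epsilon|^{n/2}|\nabla u_\epsilon(y_\epsilon)| + |y_\epsilon-x_\epsilon|^{n/2}u_\epsilon(y_\epsilon)/d(y_\epsilon,\partial\Omega)$ and then splits into cases; in the interior case it rescales around $x_\epsilon$ and works on the annulus $B_{5/4}\setminus \overline{B_{1/2}}$ (where the singularity of the rescaled solution sits at the origin), while in the boundary case it rescales around $z_\epsilon$, the nearest boundary point to $x_\epsilon$, straightens the boundary, and reads off both the gradient bound and the $d(\cdot,\partial\Omega)$-bound simultaneously from the $C^1$ estimate plus the fact that the rescaled function vanishes on the flat boundary piece. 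You instead decouple the two bounds: first you prove only the gradient estimate by rescaling around $y_\epsilon$ itself — which has the small advantage that the blow-up point $x_\epsilon$ lands at distance $1$ in rescaled coordinates and stays out of the way — and then you derive the $d(\cdot,\partial\Omega)$-bound a posteriori from the gradient bound, the Dirichlet condition, and the mean value theorem along the segment to the nearest boundary point, splitting on whether $d(x,\partial\Omega)\lessgtr |x-x_\epsilon|/2$. The paper's version is more compact (one contradiction argument yields both conclusions) but requires carefully tracking the vanishing of the rescaled function on the straightened boundary; your version is slightly more modular and makes the logical dependence of the second estimate on the first transparent. One small point of care in your interior case: when $d(y_\epsilon,\partial\Omega)/r_\epsilon$ is bounded below by $c\in(0,1/2)$ but not by $1/2$, the fixed radius $1/4$ you quote may leave $B_{1/4}(0)\cap\tilde\Omega_\epsilon^c\neq\emptyset$; you should instead apply the interior estimate on $B_{c/2}(0)$, which is all you need for $|\nabla\tilde u_\epsilon(0)|$. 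Your handling of the weight $|y_\epsilon/r_\epsilon+\xi|^{-s_\epsilon}$ (uniform $L^p$ bound since $s_\epsilon\to 0$) is exactly right and is the same point the paper makes in its Step 1.2.
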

\begin{proof}
We proceed by contradiction and assume that there exists a sequence of points $(y_{\epsilon})_{\epsilon >0}$ in $\Omega $ such that 
\begin{align}{\label{on the contrary, strong b'dds lemma two}}
\left|y_{\epsilon} -x_{\epsilon} \right|^{{n}/{2}}  |\nabla u_{\epsilon}(y_{\epsilon} )| + \frac{\left|y_{\epsilon} -x_{\epsilon} \right|^{n/2} u_{\epsilon}(y_{\epsilon})}{d(y_{\epsilon} , \partial \Omega)} \longrightarrow + \infty  \qquad \text{ as } \epsilon \to 0 
\end{align}
We define $ \lim_{\epsilon \to 0} x_{\epsilon} =x_{0} \in \overline{\Omega}$ and $\lim_{\epsilon \to 0} y_{\epsilon} =y_{0} \in \overline{\Omega}$.

\medskip\noindent{\bf Case 1:} we assume that $x_{0} \neq y_{0}$. We  choose   $\delta >0$ such that  $ 0 < 4 \delta < {\left| x_{0}- y_{0} \right|}$. Then one has that $\delta < \left| x- x_{\epsilon} \right|$ for all $ x \in B_{y_{0}}(2\delta) \cap \Omega$ and  Lemma \ref{strong b'dds lemma one}  then gives us that there exists a constant $C(\delta) >0$ such that $0 \leq u_{\epsilon} \leq C(\delta)$ in $ B_{y_{0}}(2\delta)$. Then from equation \eqref{the eqn} and standard elliptic theory, $u_\eps$  is bounded in $ C^{1}\left(  B_{y_{0}}(\delta)  \cap \overline{\Omega} \right)$.  So there exists a constant $C >0$ such that $|\nabla u_{\epsilon}(x)| \leq C$ and $u_{\epsilon} (x) \leq C d(x, \partial \Omega)$ for all $x \in B_{y_{0}}(\delta) \cap \overline{\Omega}$. This contradictis \eqref{on the contrary, strong b'dds lemma two}. The proposition is proved in Case 1. 

\medskip\noindent{\bf Case 2:} we assume that $x_{0} = y_{0}$. Define $\alpha_{\epsilon} = \left| y_{\epsilon}- x_{\epsilon} \right|$, so that $\lim \limits_{\epsilon \to 0} \alpha_{\epsilon}=0$. 

\medskip\noindent{\bf Case 2.1:} We assume that upto a subsequence 
\begin{align*}
d(x_{\epsilon}, \partial \Omega)  \geq 2  \left| y_{\epsilon}- x_{\epsilon} \right|
\end{align*}
For $\epsilon >0$ we let 
\begin{align*}
\tilde{u}_{\epsilon}(x)= \alpha_{\epsilon}^{\frac{n-2}{2}} u_{\epsilon} \left( x_{\epsilon}  + \alpha_{\epsilon} x \right)   \qquad \text{ for } ~ x \in  B_{0}(3/2)
\end{align*}
This is well defined since $B_{x_{\epsilon}}(2 \alpha_{\epsilon}  ) \subset \Omega$. Using Lemma  \ref{strong b'dds lemma one}  one  obtains that there exists a  constant $C>0$ such that 
\begin{align*}
|x|^{\frac{n-2}{2}} \tilde{u}_{\epsilon}(x) \leq C \qquad \text{ for } x \in  B_{0}(3/2).
\end{align*}
Arguing as in Step 1.3 of the proof of Lemma \ref{strong b'dds lemma one}, standard elliptic theory yields
\begin{align*}
\left\| \tilde{u}_{\epsilon} \right\|_{C^{1}\left(  B_{0}(5/4) \setminus \overline{B_{0}(1/2) } \right)} =O(1) \qquad \text{ as } \epsilon \to 0
\end{align*}
Then one then obtains  as  $\epsilon \to 0$
\begin{align*}
\left| \nabla \tilde{u}_{\epsilon} \left( \frac{ y_{\epsilon} - x_{\epsilon}}{ | y_{\epsilon}-x_{\epsilon} |} \right) \right| =O(1) \hbox{ and }   \tilde{u}_{\epsilon} \left( \frac{ y_{\epsilon} - x_{\epsilon}}{ | y_{\epsilon}-x_{\epsilon} |} \right)  = O(1). 
\end{align*}
coming back to the definition of $\tilde{u}_{\epsilon}$, this contradicts \eqref{on the contrary, strong b'dds lemma two}. This ends Case 2.1.

\medskip\noindent{\bf Case 2.2:} We assume that upto a subsequence 
\begin{align*}
d(x_{\epsilon}, \partial \Omega)  \leq 2  \left| y_{\epsilon}- x_{\epsilon} \right|
\end{align*}
Let $\displaystyle{\mathcal{T}: U \to V  }$ be a parametrisation of the boundary $\partial \Omega$ as in \eqref{def:T:bdy} around the point $p=x_{0}$.
Let $z_{\epsilon} \in \partial \Omega$ be such that $\left|  z_{\epsilon} - x_{\epsilon} \right| = d(x_{\epsilon}, \partial \Omega)$ for $\epsilon >0$. We let $\tilde{x}_{\epsilon}$, $\tilde{z}_{\epsilon} \in U$  be such that  $\mathcal{T}(\tilde{x}_{\epsilon})= {x}_{\epsilon}$ and $\mathcal{T}(\tilde{z}_{\epsilon})= {z}_{\epsilon} $. Then it follows from the properties of the boundary chart $\mathcal{T}$, that $\lim_{\epsilon \to 0} \tilde{x}_{\epsilon}=0= \lim_{\epsilon \to 0} \tilde{z}_{\epsilon}$, $(\tilde{x}_{\epsilon})_{1} <0 $ and $(\tilde{z}_{\epsilon})_{1} =0$. For all $\epsilon >0$, we let 
\begin{align*}
\tilde{u}_{\epsilon}(x) = \alpha^{\frac{n-2}{2}}_{\epsilon} u_{\epsilon} \circ \mathcal{T}(\tilde{z}_{\epsilon}+\alpha_{\epsilon  } x) \qquad \text{ for } x \in \frac{U -\tilde{z}_{\epsilon} }{\alpha_{\epsilon}}\cap \{ x_{1} \leq 0 \} 
\end{align*}
For any $R >0$,  $\tilde{u}_{\epsilon}$ is defined in $B_{0}(R) \cap \{ x_{1} \leq 0 \} $ for $\epsilon >0$ small enough. Using lemma  Lemma \ref{strong b'dds lemma one} and the properties of the chart $\mathcal{T}$, one  obtains that there exists a  constant $C>0$ such that 
\begin{align*}
  \left| \rho_{\epsilon}  -x\right|^{\frac{n-2}{2}} \tilde{u}_{\epsilon}(x) \leq C \qquad \text{ for } x \in B_{0}(R) \cap \{ x_{1} \leq 0 \} 
\end{align*}
where $\rho_{\epsilon}=  \frac{\tilde{x}_{\epsilon}- \tilde{z}_{\epsilon} }{\alpha_{\epsilon}}$ and there exists $\rho_{0} \in \overline{\R}_{-}$ such that $\rho_{\epsilon}  \rightarrow \rho_{0}$ as $\eps\to 0$. Arguing again as in Step 1.3 of the proof of Lemma \ref{blowup lemma},  standard elliptic theory yields 
 \begin{align*}
 \left\| \tilde{u}_{\epsilon} \right\|_{C^{1}\left( \overline{B_{0}(R/2)}\setminus{{B_{\rho_{0}}(2\delta)}} \cap \{ x_{1} \leq 0 \} \right)} =O(1) \qquad \text{ as } \epsilon \to 0
 \end{align*}
 and $\tilde{u}_{\epsilon}$ vanishes on the boundary $B_{0}(R/2)\setminus{\overline{B_{\rho_{0}}(2\delta)}} \cap    \{ x_{1 } =0 \}$. 
Let $\tilde{y}_{\epsilon} \in U$ be such that  $\mathcal{T}(\tilde{y}_{\epsilon})= y_{\epsilon}$. It then follows that, as $\epsilon \to 0$
\begin{align*}
\left| \nabla \tilde{u}_{\epsilon} \left( \ \frac{\tilde{y}_{\epsilon}-\tilde{z}_{\epsilon}}{\alpha_{\epsilon} } \right) \right| =O(1), \qquad   \tilde{u}_{\epsilon} \left(  \frac{\tilde{y}_{\epsilon}-\tilde{z}_{\epsilon}}{\alpha_{\epsilon} } \right)  = O(1) \qquad 
\end{align*}
and since $\tilde{u}_{\epsilon}$ vanishes on the boundary $B_{0}(R/2)\setminus{\overline{B_{\rho_{0}}(2\delta)}} \cap    \{ x_{1 } =0 \}$, it follows that 
\begin{align*}
0 \leq  \tilde{u}_{\epsilon} \left(  \frac{\tilde{y}_{\epsilon}-\tilde{z}_{\epsilon}}{\alpha_{\epsilon} } \right) =  O \left(\frac{(\tilde{y}_{\epsilon}- \tilde{z}_{\epsilon})_{1}}{\alpha_{\epsilon}  } \right)=O \left(\frac{(\tilde{y}_{\epsilon})_{1}}{\alpha_{\epsilon}  } \right) =  O \left(\frac{d(y_{\epsilon}, \partial \Omega )}{\alpha_{\epsilon}  } \right) 
\end{align*}
comig back to the definition of $\tilde{u}_{\epsilon}$ this  implies that as  $\epsilon \to 0$
$$\left|y_{\epsilon} -x_{\epsilon} \right|^{{n}/{2}}  |\nabla u_{\epsilon}(y_{\epsilon} )| = O(1), \hbox{ and }
 \left|y_{\epsilon} -x_{\epsilon} \right|^{{n}/{2}}  u_{\epsilon}(y_{\epsilon} )   =O(d(x_{\epsilon}, \partial \Omega)),$$
contradicting  \eqref{on the contrary, strong b'dds lemma two}. This ends Case 2.2.

\medskip\noindent All these cases prove Proposition \ref{strong b'dds lemma two}.\end{proof}

\medskip\noindent As a consequence of Proposition \ref{strong b'dds lemma one} and   Proposition \ref{strong b'dds lemma two} we get the following:
\begin{corollary}{\label{u strongly goes to 0 locally}}
Let $\left(u_{\epsilon} \right)_{\epsilon >0}$ be  as in Theorem \ref{first blowup theorem}, and let $\lim \limits_{\epsilon \to 0} x_{\epsilon} \to x_{0} \in \overline{\Omega}$, then  upto a subsequence $\lim_{\epsilon \to 0} u_{\epsilon}=0$ in $C^{1}_{loc}(\overline{\Omega}\backslash \{ x_{0}\})$.
\end{corollary}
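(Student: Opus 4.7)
The strategy is to combine the pointwise estimates of Propositions \ref{strong b'dds lemma one} and \ref{strong b'dds lemma two} with standard elliptic regularity to upgrade the weak $H^2_{1,0}$-convergence of $(u_\epsilon)_\epsilon$ to zero into locally uniform $C^1$-convergence away from $x_0$.

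First, I would fix an arbitrary compact set $K \subset \overline{\Omega}\setminus\{x_0\}$ and set $\delta:=\tfrac12 d(K,x_0)>0$, so that $|x-x_\epsilon|\geq \delta$ for every $x\in K$ and every $\epsilon$ small enough. Proposition \ref{strong b'dds lemma one} then yields a constant $M=M(K)$ with $0\leq u_\epsilon(x)\leq M$ on $K$. Rewriting \eqref{the eqn} as $\Delta u_\epsilon + a u_\epsilon = f_\epsilon$ with $f_\epsilon := u_\epsilon^{2^*(s_\epsilon)-1}|x|^{-s_\epsilon}$, one has $|f_\epsilon(x)|\leq M^{2^*(s_\epsilon)-1}|x|^{-s_\epsilon}$ on $K$. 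Since $s_\epsilon \to 0$, for any fixed $p\in (n,\infty)$ we have $ps_\epsilon<n$ eventually, hence $\|f_\epsilon\|_{L^p(U)}\leq C(p,K)$ uniformly in $\epsilon$, where $U$ is a slight enlargement of $K$ still contained in $\overline{\Omega}\setminus\{x_0\}$.

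Second, I would apply standard $L^p$-theory for the Dirichlet problem to the coercive operator $\Delta+a$ on the smooth domain $\Omega$. Using interior estimates combined with boundary estimates near $K\cap \partial\Omega$ (valid because $u_\epsilon=0$ on $\partial\Omega$ and $\partial\Omega$ is smooth away from $x_0$ if $x_0\in\partial\Omega$), I obtain $\|u_\epsilon\|_{W^{2,p}(K)}\leq C'(p,K)$. Morrey's embedding then gives a uniform bound in $C^{1,\alpha}(K)$ with $\alpha=1-n/p>0$. By Arzel\`a--Ascoli applied to an exhaustion of $\overline{\Omega}\setminus\{x_0\}$ by such compact sets $K$, together with a diagonal extraction, we obtain a subsequence, still denoted $(u_\epsilon)_\epsilon$, converging in $C^1_{loc}(\overline{\Omega}\setminus\{x_0\})$ to some $u\in C^1(\overline{\Omega}\setminus\{x_0\})$.

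Finally, I would identify the limit as $u\equiv 0$. The compact embedding $H^2_{1,0}(\Omega)\hookrightarrow L^2(\Omega)$ combined with $u_\epsilon\rightharpoonup 0$ weakly in $H^2_{1,0}(\Omega)$ forces $u_\epsilon\to 0$ strongly in $L^2(\Omega)$, and along a further subsequence almost everywhere, which together with the $C^1_{loc}$-convergence forces $u\equiv 0$. The only mild technical obstacle is the uniformity of the elliptic estimates up to the boundary in the presence of the singular weight $|x|^{-s_\epsilon}$, which only matters if $0\in K$; this is handled by exploiting $s_\epsilon\to 0$ to keep $|x|^{-s_\epsilon}$ uniformly bounded in $L^p$ on bounded sets for any prescribed $p$.
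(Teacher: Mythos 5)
Your proposal is correct and supplies the standard argument that the paper omits (the Corollary is stated without proof as a direct consequence of Propositions \ref{strong b'dds lemma one} and \ref{strong b'dds lemma two}). Your chain — uniform $L^\infty$ bound on a compact $K$ away from $x_0$ from Proposition \ref{strong b'dds lemma one}, uniform $L^p$ bound on $|x|^{-s_\epsilon}u_\epsilon^{2^*(s_\epsilon)-1}$ for a fixed $p>n$ using $s_\epsilon\to 0$, interior and boundary $W^{2,p}$ elliptic estimates, Morrey embedding, Arzel\`a--Ascoli with diagonal extraction, and identification of the limit via the compact embedding $H^2_{1,0}(\Omega)\hookrightarrow L^2(\Omega)$ together with the hypothesis $u_\epsilon\rightharpoonup 0$ — is exactly what one would write in for this step. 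Your remark about the singular weight only mattering when $0\in K$ (i.e.\ when $x_0\neq 0$) is the right subtlety to flag, and your handling of it is sound. One remark: the paper advertises the Corollary as following from both Propositions \ref{strong b'dds lemma one} and \ref{strong b'dds lemma two}, while your argument invokes only the first; this is not a gap, because the $L^p$-elliptic route already delivers $C^{1,\alpha}$ compactness. Proposition \ref{strong b'dds lemma two} would instead give pointwise Lipschitz bounds directly, but one still needs an elliptic bootstrap to pass from equi-Lipschitz to $C^1$ convergence, so your route is if anything cleaner.
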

We slightly improve our estimate  in Proposition \ref{strong b'dds lemma one}   to  obtain  
\begin{proposition}{\label{strong conc estimate prop}}
With the same hypothesis as in Theorem \ref{first blowup theorem}  we have
\begin{align*}
\lim \limits_{R \to + \infty} \lim \limits_{\epsilon \to 0} \sup \limits_{x \in \Omega \backslash B_{x_{\epsilon}}(Rk_{\epsilon}) }  
\left|x-x_{\epsilon} \right|^{\frac{n-2}{2}} u_{\epsilon}(x) =0
\end{align*}
\end{proposition}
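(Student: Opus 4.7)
I would argue by contradiction, following the standard strong-pointwise-control strategy of Druet--Hebey--Robert \cite{dhr}. If the conclusion fails, there exist $\delta>0$, a sequence $R_\epsilon\to+\infty$, and points $y_\epsilon\in\Omega$ with $|y_\epsilon-x_\epsilon|\geq R_\epsilon k_\epsilon$ and $|y_\epsilon-x_\epsilon|^{(n-2)/2}u_\epsilon(y_\epsilon)\geq \delta$. I would refine the choice of $y_\epsilon$ so that $|y_\epsilon-x_\epsilon|^{(n-2)/2}u_\epsilon(y_\epsilon)$ is within a factor $2$ of the supremum on $\Omega\setminus B_{x_\epsilon}(R_\epsilon k_\epsilon)$. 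Setting $\lambda_\epsilon:=u_\epsilon(y_\epsilon)^{-2/(n-2)}$, Proposition~\ref{strong b'dds lemma one} and the lower bound $|y_\epsilon-x_\epsilon|^{(n-2)/2}u_\epsilon(y_\epsilon)\geq\delta$ together force $\lambda_\epsilon\asymp |y_\epsilon-x_\epsilon|$, and in particular $\lambda_\epsilon/k_\epsilon\to+\infty$. Corollary~\ref{u strongly goes to 0 locally} rules out $y_\epsilon\to y_0\neq x_0$, so $y_\epsilon\to x_0$ and $\lambda_\epsilon\to 0$.

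Next I would perform a blow-up. Since $\lambda_\epsilon\asymp|y_\epsilon-x_\epsilon|$, up to extraction $(y_\epsilon-x_\epsilon)/\lambda_\epsilon\to\xi_0\in\R^n\setminus\{0\}$, so under the rescaling $z=(x-y_\epsilon)/\lambda_\epsilon$ the concentration point $x_\epsilon$ moves to $-\xi_0$. Define $v_\epsilon(z):=u_\epsilon(y_\epsilon+\lambda_\epsilon z)/u_\epsilon(y_\epsilon)$. The refined choice of $y_\epsilon$ combined with Proposition~\ref{strong b'dds lemma one} yields $v_\epsilon(z)\leq C/|z+\xi_0|^{(n-2)/2}$ uniformly on compact subsets of $\R^n\setminus\{-\xi_0\}$. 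The rescaled equation reads
\[
\Delta v_\epsilon=\left(\frac{\lambda_\epsilon}{|y_\epsilon+\lambda_\epsilon z|}\right)^{s_\epsilon} v_\epsilon^{2^*(s_\epsilon)-1}-\lambda_\epsilon^{2}\, a(y_\epsilon+\lambda_\epsilon z)\, v_\epsilon,
\]
whose right-hand side is uniformly bounded in $L^p_{loc}$ on $\R^n\setminus\{-\xi_0\}$ for some $p>n$. Standard elliptic theory as in the proof of Lemma~\ref{blowup lemma}, together with Proposition~\ref{strong b'dds lemma two} to handle a possible boundary at finite distance after rescaling, gives $v_\epsilon\to v$ in $C^1_{loc}$ on $\R^n\setminus\{-\xi_0\}$ (intersected with a half-space if $x_0\in\partial\Omega$), where $v\geq 0$ satisfies $v(0)=1$.

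The $C^0_{loc}$-convergence together with $v(0)=1$ produces $\rho>0$ and $c_0>0$ such that $\int_{B_0(\rho)} v_\epsilon^{2^*(s_\epsilon)}\,dz\geq c_0$ for $\epsilon$ small. Changing variables,
\[
\int_{B_{y_\epsilon}(\rho\lambda_\epsilon)}\frac{u_\epsilon^{2^*(s_\epsilon)}}{|x|^{s_\epsilon}}\,dx=\int_{B_0(\rho)} v_\epsilon^{2^*(s_\epsilon)}\left(\frac{\lambda_\epsilon}{|y_\epsilon+\lambda_\epsilon z|}\right)^{s_\epsilon}\,dz.
\]
The main obstacle is the careful analysis of the weight $(\lambda_\epsilon/|y_\epsilon+\lambda_\epsilon z|)^{s_\epsilon}$, a nontrivial feature absent in the case $s_\epsilon=0$. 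Since $|y_\epsilon+\lambda_\epsilon z|\leq |x_\epsilon|+(|\xi_0|+\rho+o(1))\lambda_\epsilon$ on $B_0(\rho)$, one distinguishes two regimes: if $|x_\epsilon|=O(\lambda_\epsilon)$ the weight is trivially bounded below by some $c^{s_\epsilon}\to 1$; if $|x_\epsilon|\gg\lambda_\epsilon$ the weight is at least $C^{-1}(\lambda_\epsilon/|x_\epsilon|)^{s_\epsilon}\geq C^{-1}(k_\epsilon/|x_\epsilon|)^{s_\epsilon}$, which is bounded below by virtue of $k_\epsilon/\mu_\epsilon\to 1$ and $(|x_\epsilon|/\mu_\epsilon)^{s_\epsilon}\to 1$ from \eqref{lim of scaling factor}.

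Finally, I would conclude via energy concentration: since $\lambda_\epsilon/k_\epsilon\to+\infty$ and $|y_\epsilon-x_\epsilon|\asymp \lambda_\epsilon$, for each fixed $R>0$ one has $B_{y_\epsilon}(\rho\lambda_\epsilon)\subset\Omega\setminus B_{x_\epsilon}(Rk_\epsilon)$ for $\epsilon$ small, so Proposition~\ref{conc of energy} forces the integral on the left-hand side of the change-of-variables identity to vanish as $\epsilon\to 0$ then $R\to+\infty$. This contradicts the strictly positive lower bound obtained from the weight analysis, completing the argument.
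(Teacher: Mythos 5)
Your proof plan is correct and takes essentially the same route as the paper: argue by contradiction, fix a bad sequence $(y_\epsilon)$, derive $\lambda_\epsilon\asymp |y_\epsilon-x_\epsilon|$ and $\lambda_\epsilon/k_\epsilon\to+\infty$ from Proposition~\ref{strong b'dds lemma one}, blow up at $y_\epsilon$ to get a nontrivial $C^1_{loc}$-limit with $v(0)=1$, control the singular weight using \eqref{lim of scaling factor}, and contradict the energy concentration of Proposition~\ref{conc of energy}. The only cosmetic difference is that you rescale at $\lambda_\epsilon$ and then prove the weight $(\lambda_\epsilon/|y_\epsilon+\lambda_\epsilon z|)^{s_\epsilon}$ is bounded below by splitting into the regimes $|x_\epsilon|=O(\lambda_\epsilon)$ and $|x_\epsilon|\gg\lambda_\epsilon$, whereas the paper rescales at $l_\epsilon=|y_\epsilon|^{s_\epsilon/2}\lambda_\epsilon^{(2-s_\epsilon)/2}$ and proves $|y_\epsilon|^{s_\epsilon}/\lambda_\epsilon^{s_\epsilon}=O(1)$ by contradiction using \eqref{id:1}; the two reductions are equivalent once one observes $|y_\epsilon|=|x_\epsilon|+O(\lambda_\epsilon)$ and $\lambda_\epsilon\geq(1+o(1))k_\epsilon$.
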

\begin{proof}
Suppose on the contrary there exists $\epsilon_{0}>0$   and a sequence of  points $(y_{\epsilon})_{\epsilon>0} \in \Omega $ such that upto a subsequence 
\begin{align}{\label{strong conc estimate contrary 1}}
 \left|y_{\epsilon}-x_{\epsilon} \right|^{\frac{n-2}{2}} u_{\epsilon}( y_{\epsilon}  ) \geq \epsilon^{\frac{n-2}{2}}_{0} \qquad  \text{ and  } \qquad \lim  \limits_{\epsilon \to 0} \frac{\left|y_{\epsilon}-x_{\epsilon} \right|}{ k_{\epsilon}   } = + \infty
\end{align}
It then follows  from Corollary \ref{u strongly goes to 0 locally} that $\lim  \limits_{\epsilon \to 0} \left|y_{\epsilon}-x_{\epsilon} \right|=0$. We define $\lambda_{\epsilon}^{-\frac{n-2}{2}} =  u_{\epsilon}(y_{\epsilon})$. Then $(\ref{strong conc estimate contrary 1})$ becomes 
\begin{align}{\label{strong conc estimate contrary 2}}
C \geq \frac{\left| y_{\epsilon}-x_{\epsilon}\right|}{\lambda_{\epsilon}}   \geq \epsilon_{0} \qquad  \text{ for all }~ \epsilon >0
\end{align}
and so  $\displaystyle{\lim \limits_{\epsilon \to 0} \lambda_{\epsilon} =0}$.  using Lemma \ref{strong b'dds lemma one} we obtain that  as $\epsilon \to 0$
\begin{align}{\label{strong conc estimate contrary 4 a}}
 \frac{k_{\epsilon}}{\lambda_{\epsilon}}=  \frac{k_{\epsilon}}{|y_{\epsilon}-x_{\epsilon}|} \frac{|y_{\epsilon} - x_{\epsilon} |}{\lambda_{\epsilon}}= O\left( \frac{k_{\epsilon}}{|y_{\epsilon}-x_{\epsilon}|} \right) =o(1)
\end{align}
We define $l_{\epsilon}= |y_{\epsilon}|^{s_{\epsilon}/2} \lambda_{\epsilon}^{\frac{2-s_{\epsilon}}{2}}$ for $\epsilon >0$. Then $\displaystyle{\lim \limits_{\epsilon \to 0} l_{\epsilon} =0}$.

\medskip\noindent We first claim that 
\begin{align}\label{strong conc estimate prop, step 1}
\frac{|y_{\epsilon}|^{s_{\epsilon}}}{\lambda_{\epsilon}^{s_{\epsilon}}}=O(1) \qquad \text{ as } \epsilon \to 0 
\end{align}
We proceed by contradiction and we assume that $\lim \limits_{\epsilon \to 0 } \frac{\lambda_{\epsilon}^{s_{\epsilon}}}{|y_{\epsilon}|^{s_{\epsilon}}} =0$. Now, using \eqref{id:1} and \eqref{lim of scaling factor}, we get that $\lim \limits_{\epsilon \to 0 } \frac{|x_{\epsilon}|^{s_{\epsilon}}}{|y_{\epsilon}|^{s_{\epsilon}}} =0$. And in particular one has that $\lim \limits_{\epsilon \to 0 } \frac{|x_{\epsilon}|}{|y_{\epsilon}|}=0$ and  $\lim \limits_{\epsilon \to 0 } \frac{\lambda_{\epsilon}}{|y_{\epsilon}|}=0$.  Then
\begin{align*}
\frac{\left| y_{\epsilon}-x_{\epsilon}\right|}{\lambda_{\epsilon}}   \geq \frac{|y_{\epsilon}|}{\lambda_{\epsilon}} \left| 1- \frac{|x_{\epsilon}|}{|y_{\epsilon}|}\right| \to + \infty\hbox{ as }\epsilon\to 0
 \end{align*}
A contradiction to \eqref{strong conc estimate contrary 2}, proving our claim.  We note  that  then there exists $c_{2} > 0$ such that for $\epsilon>0$ small
\begin{align}{\label{strong conc estimate contrary 3}}
\frac{\left| y_{\epsilon}-x_{\epsilon}\right|}{l_{\epsilon}} = \frac{\left| y_{\epsilon}-x_{\epsilon}\right|}{\lambda_{\epsilon}} \frac{\lambda_{\epsilon}^{s_{\epsilon}/2}}{|y_{\epsilon}|^{s_{\epsilon}/2}}  \geq c_{2} 
\end{align}
Arguing as in case $2.2$ of Lemma \ref{strong b'dds lemma one} we  see that we  cannot have $\displaystyle{\lim \limits_{\epsilon \to 0} \frac{d(y_{\epsilon}, \partial \Omega )}{l_{\epsilon}}=0}$.  Let $\rho_{0}>0$  be such that upto a subsequence  $\displaystyle{\frac{d(y_{\epsilon}, \partial \Omega )}{l_{\epsilon}} \geq 2 \rho_{0} }$. Without loss of generality we can take $2\rho_{0} < c_{2}$.  Then proceeding as in  step $3$ of Lemma \ref{strong b'dds lemma one} we arrive at a contradiction. These  steps complete the proof of Proposition \ref{strong conc estimate prop}. \end{proof}

\section{Refined Blowup Analysis II}\label{sec:blowup:2}
This section is devoted to the proof of Theorem \ref{th:1}.
\begin{proof}{\bf Step 1:} We claim that for any $\alpha \in  (0, n-2)$, there exists  $C_{\alpha}>0$  such that  for all $\epsilon >0$
\begin{align}\label{strong estimate first approx}
\left| x-x_{\epsilon} \right|^{\alpha} \mu_{\epsilon}^{\frac{n-2}{2} - \alpha} u_{\epsilon}(x) \leq C_{\alpha} \qquad \text{ for all } x \in  \Omega
\end{align}
\begin{proof}
Since the operator $\Delta + a  $ is coercive  on $\Omega$ and $a \in C(\overline{\Omega})$, there exists $U_{0} \subset \R^{n}$ an open set such that $\overline{\Omega} \subset \subset U_{0}$, and there exists $a_{1}>0$, $A_{1}>0$ such that 
\begin{align*}
\int \limits_{U_{0}}  \left| \nabla \varphi \right|^{2} ~ dx+ \int \limits_{U_{0}}   \left(  a -  a_{1} \right)  \varphi^{2} ~ dx \geq  A_{1}  \int \limits_{U_{0}} \varphi^{2}~ dx  \qquad \text{ for all } \varphi \in C^{\infty}_{c}(U_{0}),
\end{align*}  
where we have continuously extended $a$ to $U_0$. In other words  the operator  $\Delta + (  a- a_{1})  $ is   coercive  on $U_{0}$. Let   $\tilde{G}: \overline{U_{0}} \times \overline{U_{0}} \setminus \{ (x,x): x\in \overline{U_{0}} \}  \longrightarrow \R$ be  the {\emph{Green's function}}  of the operator $\Delta +  ( a-a_{1}) $ with Dirichlet boundary conditions. The $\tilde{G}$ satisfies
\begin{align}{\label{Green dist def}}
\Delta \tilde{G}(x, \cdot) +  (  a -a_{1} )    \tilde{G}(x, \cdot)  = \delta_{x} 
\end{align}
Since  the operator  $\Delta + (  a-a_{1}) $ is   coercive on $U_{0}$, $\tilde{G}$ exists.  See Robert \cite{FR greens}. We set  $\tilde{G}_{\epsilon} (x)= \tilde{G}( x_{\epsilon}, x)$ for all $x \in \overline{U_{0}} \backslash \{ x_{\epsilon}\}$ and $\eps>0$. Then there exists $C>0$ such that 
\begin{align*}
0 < \tilde{G}_{\epsilon}(x) <  \frac{C}{\left|x- x_{\epsilon} \right|^{n-2}} \qquad \text{ for } x \in \overline{U_{0}} \backslash \{ x_{\epsilon}\}.
\end{align*}
Moreover there exists  $\delta_{0} >0$ and $C_{0} >0$ such that  for all $\epsilon >0$ 
\begin{align}{\label{Green estimate 2}}
\tilde{G}_{\epsilon}(x) \geq \frac{C_{0}}{\left|x- x_{\epsilon} \right|^{n-2}} ~ \text{ and } ~  \frac{| \nabla \tilde{G}_{\epsilon}(x)|}{| \tilde{G}_{\epsilon}(x) |} \geq \frac{C_{0}}{\left|x- x_{\epsilon} \right|}  \text{ for } x \in {B_{x_{\epsilon}}(\delta_{0})} \backslash \{ x_{\epsilon}\} \subset \subset U_{0}
\end{align}
We  define the operator  
\begin{align*}
\mathcal{L}_{\epsilon} = \Delta + a  - \frac{ u_{\epsilon}^{2^{*}(s_{\epsilon})-2} }{|x|^{s_{\epsilon}}} 
\end{align*}

\medskip\noindent{\bf Step 1.1:} We claim that there exists $\nu_{0}\in (0,1)$  such that given  any  $\nu \in (0, \nu_{0})$ there exists  $R_{1}>0$ such that for  $R > R_{1}$ and $\epsilon >0$ sufficiently small we have 
\begin{align}\label{to apply max principle 1}
\mathcal{L}_{\epsilon}\tilde{G}_{\epsilon}^{1-\nu} >0 \qquad \text{ in } ~ {\Omega} \backslash  B_{x_{\epsilon}}(Rk_{\epsilon})
 \end{align}
We prove the claim. We  choose  $\nu_{0} \in (0,1) $  such that for any $\nu \in (0, \nu_{0})$ one has $\nu \left(a - a_{1} \right) \geq -\frac{a_{1}}{2} $ in $\Omega$. Fix  $\nu \in (0, \nu_{0})$. Using $(\ref{Green dist def})$  we  obtain for  $\epsilon >0$ sufficiently small
\begin{align*}
\frac{\mathcal{L}_{\epsilon}\tilde{G}_{\epsilon}^{1-\nu} }{\tilde{G}_{\epsilon}^{1-\nu} } =& a_{1}   +\nu (a-a_{1} ) + \nu(1-\nu) \frac{| \nabla \tilde{G}_{\epsilon}| ^{2}}{ |\tilde{G}_{\epsilon}|^{2}}  - \frac{ u_{\epsilon}^{2^{*}(s_{\epsilon})-2} }{|x|^{s_{\epsilon}}}  \qquad \text{ in } ~ \Omega \backslash \{ x_{\epsilon}\} \notag\\
&\geq \frac{a_{1}}{2} +\nu(1-\nu) \frac{| \nabla \tilde{G}_{\epsilon}| ^{2}}{ |\tilde{G}_{\epsilon}|^{2}}  - \frac{ u_{\epsilon}^{2^{*}(s_{\epsilon})-2} }{|x|^{s_{\epsilon}}}  \qquad \qquad  \qquad \text{ in } ~ \Omega \backslash \{ x_{\epsilon}\} 
\end{align*}
Let $|x-x_{\epsilon}| \geq \delta_{0}$,where $\delta_{0}$ is as in $(\ref{Green estimate 2})$,  then  from Corollary \ref{u strongly goes to 0 locally}  we have 
\begin{align*}
\lim  \limits_{\epsilon \to 0}  \frac{ u_{\epsilon}^{2^{*}(s_{\epsilon})-2} }{|x|^{s_{\epsilon}}}=0 \qquad \text{ in } C(\overline{\Omega\backslash  B_{x_{\epsilon}}(\delta_{0}) }) 
\end{align*}
Hence for $\epsilon >0$ sufficiently small we have  for $\nu \in (0, \nu_{0})$
\begin{align*}
\frac{\mathcal{L}_{\epsilon}\tilde{G}_{\epsilon}^{1-\nu} }{\tilde{G}_{\epsilon}^{1-\nu} } >0 \qquad \text{ for  } x \in  {\Omega} \backslash  B_{x_{\epsilon}}(\delta_{0})
\end{align*}

\noindent
By strong pointwise estimates,  Proposition \ref{strong conc estimate prop}   we have that, given any $\nu \in (0, \nu_{0})$, there exists $R_{1}>0$ such that for any $R > R_{1}$ 
\begin{align*}
\sup \limits_{\Omega \backslash B_{x_{\epsilon}}(R k_{\epsilon})} \left|x-x_{\epsilon} \right|^{\frac{n-2}{2}} u_{\epsilon}(x)  \leq  \left[ \frac{\nu(1-\nu)}{4} C_{0}^{2} \right]^{\frac{n-2}{4}}
\end{align*}
Here $C_{0}$ is as in $(\ref{Green estimate 2})$. And then using Lemma \ref{strong b'dds lemma two}  we  obtain  for $\epsilon>0$ small 
\begin{align*}
  \frac{ u_{\epsilon}^{2^{*}(s_{\epsilon})-2} }{|x|^{s_{\epsilon}}}
& =   \left[ u_{\epsilon}^{2^{*}(s_{\epsilon})-2-s_{\epsilon}}  \left(\frac{ u_{\epsilon}}{|x|} \right)^{s_{\epsilon}} \right]  \notag
\leq  \frac{\nu(1-\nu)}{2} \frac{C^{2}_{0}}{\left|  x- x_{\epsilon}  \right|^{2}}   
\end{align*} 
for all $x\in \Omega \backslash B_{x_{\epsilon}}(R k_{\epsilon})$. Therefore if $x \in B_{x_{\epsilon}}(\delta_{0}) \backslash B_{x_{\epsilon}}(R k_{\epsilon})$ then with $\eqref{Green estimate 2}$ we get
\begin{align*}
\frac{\mathcal{L}_{\epsilon}\tilde{G}_{\epsilon}^{1-\nu} }{\tilde{G}_{\epsilon}^{1-\nu} } \geq  \frac{a_{1}}{2} +\frac{ \nu(1-\nu)}{2} \frac{C^{2}_{0}}{\left|  x- x_{\epsilon}  \right|^{2}}  >0
\end{align*}
for $\epsilon >0$ small. This proves the claim and ends Step 1.1.

\medskip\noindent{\bf Step 1.2:} Let  $\nu \in (0, \nu_{0})$ and $R > R_{1}$. We claim that there exists $C(R)>0$ such that for  $\epsilon >0$ small
\begin{align}\label{to apply max principle 2}
\mathcal{L}_{\epsilon} \left( C(R) \mu_{\epsilon}^{\frac{n-2}{2}- \nu(n-2)} \tilde{G}_{\epsilon}^{1-\nu} \right) &> \mathcal{L}_{\epsilon} u_{\epsilon} \qquad{ in }~ \Omega\backslash B_{x_{\epsilon}}(Rk_{\epsilon}) \notag\\
C(R) \mu_{\epsilon}^{\frac{n-2}{2}- \nu(n-2)} \tilde{G}_{\epsilon}^{1-\nu} & >  u_{\epsilon}  \ \ \  \qquad{ on }~  \partial \left( \Omega\backslash B_{x_{\epsilon}}(Rk_{\epsilon}) \right)
\end{align}
We prove the claim. Since   $\mathcal{L}_{\epsilon} u_{\epsilon} =0 $ in $\Omega$,  so it follows  from \eqref{to apply max principle 1} that $\mathcal{L}_{\epsilon} \left( C(R) \mu_{\epsilon}^{\frac{n-2}{2}- \nu(n-2)} \tilde{G}_{\epsilon}^{1-\nu} \right) > \mathcal{L}_{\epsilon} u_{\epsilon}$  in  $\Omega\backslash B_{x_{\epsilon}}(Rk_{\epsilon})$  for $R>R_{1}$ and   $\epsilon >0$ sufficiently small. With $(\ref{Green estimate 2} )$ and \eqref{lim of scaling factor},  we obtain for $\epsilon >0$ small 
\begin{align*}
\frac{u_{\epsilon}(x)}{\mu_{\epsilon}^{\frac{n-2}{2}-\nu(n-2)} \tilde{G}_{\epsilon}^{1-\nu}(x)}& \leq  \frac{\mu_{\epsilon}^{-\frac{n-2}{2}}}{\mu_{\epsilon}^{\frac{n-2}{2}-\nu(n-2)} } \frac{ \left( R k_{\epsilon}  \right)^{(n-2)(1-\nu)}}{C_{0}^{1-\nu}}  \leq  \frac{  (2R)^{(n-2)(1-\nu)}}{C_{0}^{1-\nu}}  \end{align*}
for $ x \in \Omega \cap \partial B_{ x_{\epsilon}}(Rk_{\epsilon})$. So for $x \in  \partial \left( \Omega\backslash B_{x_{\epsilon}}(Rk_{\epsilon}) \right)$ one has for $\epsilon >0$ small 
\begin{align*}
\frac{u_{\epsilon}(x)}{\mu_{\epsilon}^{\frac{n-2}{2}-\nu(n-2)} \tilde{G}_{\epsilon}^{1-\gamma}(x)} \leq C(R) \qquad \text{ for } x \in \Omega \cap \partial B_{ x_{\epsilon}}(Rk_{\epsilon})
\end{align*}
This proves the claim and ends Step 1.2.

\medskip\noindent{\bf Step 1.3:} Let  $\nu \in (0, \nu_{0})$ and $R > R_{1}$. Since $\tilde{G}_{\epsilon}^{1-\nu}>0 $ in   $\overline{ \Omega\backslash B_{x_{\epsilon}}(Rk_{\epsilon}) }$ and $\mathcal{L}_{\epsilon}\tilde{G}_{\epsilon}^{1-\nu}>0$ in $ \Omega\backslash B_{x_{\epsilon}}(Rk_{\epsilon}) $, it follows from {\cite{BNV}}  that the operator  $\mathcal{L}_{\epsilon}$ satisfies the comparison principle. Then  from \eqref{to apply max principle 2}  we have that for $\epsilon >0$ small
\begin{align*}
u_{\epsilon}(x) \leq  C(R) \mu_{\epsilon}^{\frac{n-2}{2}- \nu(n-2)} \tilde{G}_{\epsilon}^{1-\nu}(x)  \qquad \text{ for } ~ x \in  \Omega\backslash B_{x_{\epsilon}}(Rk_{\epsilon}) 
\end{align*}
Then with $(\ref{Green estimate 2})$ we get that
\begin{align*}
\left|x- x_{\epsilon} \right|^{(n-2)(1-\nu)} u_{\epsilon}(x) \leq  C(R) \mu_{\epsilon}^{\frac{n-2}{2}- \nu(n-2)}    \qquad \text{ for } ~ x \in  \Omega\backslash B_{x_{\epsilon}}(Rk_{\epsilon}) 
\end{align*}
Taking $\alpha = (n-2)(1-\nu)$, we have for $\alpha$ close to $n-2$
\begin{align*}
\left|x- x_{\epsilon} \right|^{\alpha} \mu_{\epsilon}^{\frac{n-2}{2}-\alpha} u_{\epsilon}(x) \leq  C_{\alpha}    \qquad \text{ for } ~ x \in  \Omega\backslash B_{x_{\epsilon}}(Rk_{\epsilon}) .
\end{align*}
As easily checked, this implies \eqref{strong estimate first approx}  for all  $\alpha \in (0, n-2)$. This ends Step 1.3 and also Step 1.\end{proof}
\medskip

\noindent
Next we show that one can infact take $\alpha= n-2$ in \eqref{strong estimate first approx}. 

\medskip\noindent{\bf Step 2:} We claim that there exists  $C>0$  such that  for all $\epsilon >0$
\begin{align}\label{strong estimate 2nd approx}
\left| x-x_{\epsilon} \right|^{n-2}  u_{\epsilon}(x_{\epsilon}) ~   u_{\epsilon}(x) \leq C \qquad \text{ for all } x \in  \Omega
\end{align}
\begin{proof} The claim is equivalent to proving that for any $(y_{\epsilon})_\eps \in \Omega$, we have that
\begin{align*}
\left| y_{\epsilon}-x_{\epsilon} \right|^{n-2}  u_{\epsilon}(x_{\epsilon}) ~   u_{\epsilon}(y_{\epsilon}) = O(1) \qquad { as }~ \epsilon \to 0
\end{align*}
We have the following two cases. 

\medskip\noindent{\bf Step 2.1:} Suppose that $\left| x_{\epsilon} -y_{\epsilon}\right|=O(\mu_{\epsilon})$ as $\eps\to 0$.
By  definition $(\ref{mu epsilon})$ it follows that $\left| y_{\epsilon}-x_{\epsilon} \right|^{n-2}  u_{\epsilon}(x_{\epsilon}) ~   u_{\epsilon}(y_{\epsilon})  \leq \left| y_{\epsilon}-x_{\epsilon} \right|^{n-2} \mu_{\epsilon}^{2-n}$. This proves  \eqref{strong estimate 2nd approx} in this case and ends Step 2.1.

\medskip\noindent{\bf Step 2.2:} Suppose that
\begin{align}\label{id:2}
\lim \limits_{\epsilon \to 0} \frac{\left| x_{\epsilon} -y_{\epsilon}\right|}{\mu_{\epsilon}}=+ \infty \qquad { as }~ \epsilon \to 0
\end{align}
We let for $\epsilon >0$
\begin{align*}
\hat{v}_{\epsilon}(x)= \mu_{\epsilon}^{\frac{n-2}{2}} u_{\epsilon} \left(  \mu_{\epsilon} x + x_{\epsilon} \right) \qquad \text{ for } ~ x \in \frac{\Omega - x_{\epsilon}}{\mu_{\epsilon}}
\end{align*}
Then from \eqref{strong estimate first approx}, it follows that for any $\alpha \in  (0, n-2)$, there exists  $C'_{\alpha}>0$  such that  for all $\epsilon >0$
\begin{align*}
\hat{v}_{\epsilon}(x) \leq \frac{C'_{\alpha}}{1+\left|  x \right|^{\alpha}} \qquad \text{ for } ~ x \in \frac{\Omega - x_{\epsilon}}{\mu_{\epsilon}}
\end{align*}
Let  $G$ be  the Green's function  of $\Delta +    a $ with Dirichlet boundary conditions. Green's representation formula and  standard estimates on the  Green's function yield
\begin{align*}
u_{\epsilon}(y_{\epsilon}) = \int \limits_{\Omega} G(x,y_{\epsilon}) \frac{ u_{\epsilon}^{2^{*}(s_{\epsilon})-1}(x) }{|x|^{s_{\epsilon}}} ~ dx  \leq  C\int \limits_{\Omega} \frac{1}{\left| x -y_{\epsilon}\right|^{n-2}} \frac{ u_{\epsilon}^{2^{*}(s_{\epsilon})-1}(x) }{|x|^{s_{\epsilon}}} ~ dx\hbox{ for all }\eps>0
\end{align*}
where $C>0$ is  a constant. We  write  the above integral  as follows
\begin{align*}
u_{\epsilon}(y_{\epsilon}) \leq   C\int \limits_{\Omega}  \left( \frac{u_{\epsilon}(x)}{|x|} \right)^{s_{\epsilon}} \frac{1}{\left| x -y_{\epsilon}\right|^{n-2}} ~ u_{\epsilon}(x)^{2^{*}(s_{\epsilon})-1-s_{\epsilon}} ~ dx \qquad \text{ for all } ~  \epsilon >0 
\end{align*}
Using  H\"{o}lder inequality and then by Hardy inequality $(\ref{Hardy inequality})$ we get that  for $\epsilon >0$
\begin{align*}
u_{\epsilon}(y_{\epsilon}) \leq & C  \left( ~\int \limits_{\Omega}  \frac{\left|u_{\epsilon}(x) \right|^{2}}{|x|^{2}} ~ dx \right)^{s_{\epsilon}/2} \left( \int \limits_{\Omega}  \left( \frac{1}{\left| x -y_{\epsilon}\right|^{n-2}}\right)^{\frac{2}{2-s_{\epsilon}}} ~ u_{\epsilon}(x)^{ \left( 2^{*}(s_{\epsilon})-1-s_{\epsilon} \right) \frac{2}{2-s_{\epsilon}}} ~ dx\right)^{\frac{2-s_{\epsilon}}{2}}  \notag\\
\leq &  C  \left( \left(  \frac{2}{n-2} \right)^{2}  \int \limits_{\Omega} |\nabla u_{\epsilon}|^{2} ~ dx \right)^{s_{\epsilon}/2} \left( \int \limits_{\Omega}  \left( \frac{1}{\left| x -y_{\epsilon}\right|^{n-2}}\right)^{\frac{2}{2-s_{\epsilon}}} ~ u_{\epsilon}(x)^{ \left( 2^{*}(s_{\epsilon})-1-s_{\epsilon} \right) \frac{2}{2-s_{\epsilon}}} ~ dx \right)^{\frac{2-s_{\epsilon}}{2}}   \notag\\
\end{align*} 
Since $(u_{\epsilon})_{\epsilon >0}$ is bounded in $H^{2}_{1,0}(\Omega)$,there exists $C >0$ such  that  for $\epsilon>0$ small 
\begin{align*}
u_{\epsilon}(y_{\epsilon})^{\frac{2}{2-s_{\epsilon}}} \leq  C   \int \limits_{\Omega}   \frac{1}{\left| x -y_{\epsilon}\right|^{\frac{2(n-2)}{2-s_{\epsilon}}}} ~ u_{\epsilon}(x)^{ \left( 2^{*}(s_{\epsilon})-1-s_{\epsilon} \right) \frac{2}{2-s_{\epsilon}}} ~ dx
\end{align*}
With a change of variables the above integral becomes 
\begin{align*}
u_{\epsilon}(y_{\epsilon})^{\frac{2}{2-s_{\epsilon}}}   &\leq  
  C \frac{\mu^{n}_{\epsilon}}{\mu_{\epsilon}^{\frac{n-2}{2-s_{\epsilon}}{\left( 2^{*}(s_{\epsilon})-1-s_{\epsilon} \right)  }}} \int \limits_{\frac{\Omega - x_{\epsilon}}{\mu_{\epsilon}}}   \frac{1}{\left| y_{\epsilon} -x_{\epsilon}-\mu_{\epsilon} x \right|^{\frac{2(n-2)}{2-s_{\epsilon}}}}   {\hat{v}}_{\epsilon}(x)^{ \left( 2^{*}(s_{\epsilon})-1-s_{\epsilon} \right) \frac{2}{2-s_{\epsilon}}} 
  ~ dx 
 \end{align*}
 And so we get  that  for $\epsilon>0$ small 
\begin{align}{\label{breakup of integral estimate}}
\left(\mu_{\epsilon}^{-\frac{n-2}{2}} u_{\epsilon}(y_{\epsilon}) \right)^{\frac{2}{2-s_{\epsilon}}} &\leq  C \int \limits_{ \frac{\Omega - x_{\epsilon}}{\mu_{\epsilon}} \cap \left\{ \left| y_{\epsilon} -x_{\epsilon} -\mu_{\epsilon}x \right| \geq \frac{\left| y_{\epsilon}- x_{\epsilon}\right|}{2} \right\} }\frac{1}{\left| y_{\epsilon} -x_{\epsilon}-\mu_{\epsilon} x \right|^{\frac{2(n-2)}{2-s_{\epsilon}}}}   {\hat{v}}_{\epsilon}(x)^{ \left( 2^{*}(s_{\epsilon})-1-s_{\epsilon} \right) \frac{2}{2-s_{\epsilon}}} 
  ~ dx~    \notag\\
& +  C \int \limits_{ \frac{\Omega - x_{\epsilon}}{\mu_{\epsilon}} \cap \left\{ \left| y_{\epsilon} -x_{\epsilon} -\mu_{\epsilon}x \right| \leq \frac{\left| y_{\epsilon}- x_{\epsilon}\right|}{2} \right\} } \frac{1}{\left| y_{\epsilon} -x_{\epsilon}-\mu_{\epsilon} x \right|^{\frac{2(n-2)}{2-s_{\epsilon}}}}   {\hat{v}}_{\epsilon}(x)^{ \left( 2^{*}(s_{\epsilon})-1-s_{\epsilon} \right) \frac{2}{2-s_{\epsilon}}} 
  ~ dx
\end{align}
We estimate the above two  integrals  separately. First we have for $\epsilon >0 $ small and  $\alpha$ close to $n-2$ 
\begin{eqnarray}\label{breakup of integral estimate 1}
&&\int \limits_{ \frac{\Omega - x_{\epsilon}}{\mu_{\epsilon}} \cap \left\{ \left| y_{\epsilon} -x_{\epsilon} -\mu_{\epsilon}x \right| \geq \frac{\left| y_{\epsilon}- x_{\epsilon}\right|}{2} \right\} }   \frac{1}{\left| y_{\epsilon} -x_{\epsilon}-\mu_{\epsilon} x \right|^{\frac{2(n-2)}{2-s_{\epsilon}}}}   {\hat{v}}_{\epsilon}(x)^{ \left( 2^{*}(s_{\epsilon})-1-s_{\epsilon} \right) \frac{2}{2-s_{\epsilon}}} \,
  dx \\
&&  \leq  \frac{2^{\frac{2(n-2)}{2-s_{\epsilon}}}}{\left| y_{\epsilon} - x_{\epsilon}  \right|^{\frac{2(n-2)}{2-s_{\epsilon}}}} 
\int \limits_{ \frac{\Omega - x_{\epsilon}}{\mu_{\epsilon}}}    {\hat{v}}_{\epsilon}(x)^{ \left( 2^{*}(s_{\epsilon})-1-s_{\epsilon} \right) \frac{2}{2-s_{\epsilon}}} ~ dx  \nonumber=  O \left(~ \frac{1}{\left| y_{\epsilon} - x_{\epsilon}  \right|^{\frac{2(n-2)}{2-s_{\epsilon}}}}   \right)  \end{eqnarray}
as $\eps\to 0$. On the other hand    for $\epsilon >0 $ small
\begin{align*}
&\int \limits_{ \frac{\Omega - x_{\epsilon}}{\mu_{\epsilon}} \cap \left\{ \left| y_{\epsilon} -x_{\epsilon} -\mu_{\epsilon}x \right| \leq \frac{\left| y_{\epsilon}- x_{\epsilon}\right|}{2} \right\} } \frac{1}{\left| y_{\epsilon} -x_{\epsilon}-\mu_{\epsilon} x \right|^{\frac{2(n-2)}{2-s_{\epsilon}}}}   {\hat{v}}_{\epsilon}(x)^{ \left( 2^{*}(s_{\epsilon})-1-s_{\epsilon} \right) \frac{2}{2-s_{\epsilon}}} 
  ~ dx    \notag\\
& \leq C_{\alpha} \int \limits_{ \frac{\Omega - x_{\epsilon}}{\mu_{\epsilon}} \cap \left\{ \left| y_{\epsilon} -x_{\epsilon} -\mu_{\epsilon}x \right| \leq \frac{\left| y_{\epsilon}- x_{\epsilon}\right|}{2} \right\} } \frac{1}{\left| y_{\epsilon} - x_{\epsilon} -\mu_{\epsilon} x \right|^{\frac{2(n-2)}{2-s_{\epsilon}}}} \frac{1}{|x|^{  { \left( 2^{*}(s_{\epsilon})-1-s_{\epsilon} \right) \frac{2\alpha}{2-s_{\epsilon}}}} } ~ dx     \notag\\
& \leq C_{\alpha} \left( \frac{2 \mu_{\epsilon}}{\left| y_{\epsilon} - x_{\epsilon}  \right|} \right)^{ { \left( 2^{*}(s_{\epsilon})-1-s_{\epsilon} \right) \frac{2\alpha}{2-s_{\epsilon}}}} \int \limits_{  \left\{ \left| y_{\epsilon} -x_{\epsilon} -\mu_{\epsilon}x \right| \leq \frac{\left| y_{\epsilon}- x_{\epsilon}\right|}{2} \right\} } \frac{1}{\left| y_{\epsilon} -x_{\epsilon}-\mu_{\epsilon} x \right|^{\frac{2(n-2)}{2-s_{\epsilon}}}}    ~ dx   \notag\\
& \leq C_{\alpha} \left( \frac{ \mu_{\epsilon}}{\left| y_{\epsilon} - x_{\epsilon}  \right|} \right)^{ { \left( 2^{*}(s_{\epsilon})-1-s_{\epsilon} \right) \frac{2\alpha}{2-s_{\epsilon}}}-n}  \left(  \frac{1}{\left| y_{\epsilon} - x_{\epsilon}  \right|^{n-2} }  \right)^{ \frac{2}{2-s_{\epsilon}}}
\end{align*}
Taking $\alpha$ close to $(n-2)$, and using \eqref{id:2}, we obtain for $\epsilon$ sufficiently small 
\begin{equation}{\label{breakup of integral estimate 2}}
\int \limits_{ \frac{\Omega - x_{\epsilon}}{\mu_{\epsilon}} \cap \left\{ \left| y_{\epsilon} -x_{\epsilon} -\mu_{\epsilon}x \right| \leq \frac{\left| y_{\epsilon}- x_{\epsilon}\right|}{2} \right\} } \frac{ {\hat{v}}_{\epsilon}^{2^{*}(s_{\epsilon})-1}(x) }{\left| y_{\epsilon} - x_{\epsilon} -\mu_{\epsilon} x \right|^{n-2}} ~ dx   =  o \left(  \frac{1}{\left| y_{\epsilon} - x_{\epsilon}  \right|^{n-2} }  \right)^{ \frac{2}{2-s_{\epsilon}}} 
\end{equation}
as $\eps\to 0$. Combining $(\ref{breakup of integral estimate})$, $(\ref{breakup of integral estimate 1})$ and $(\ref{breakup of integral estimate 2})$ we obtain that 
\begin{align*}
\left(\mu_{\epsilon}^{-\frac{n-2}{2}} u_{\epsilon}(y_{\epsilon}) \right)^{\frac{2}{2-s_{\epsilon}}}   \leq O\left(~ \frac{1}{\left| y_{\epsilon} - x_{\epsilon}  \right|^{\frac{2(n-2)}{2-s_{\epsilon}}}}   \right)  \qquad { as }~ \epsilon \to 0
\end{align*} 
This proves  \eqref{strong estimate 2nd approx} and ends Step 2.2 and then Step 2.\end{proof}

\medskip\noindent{\bf Step 3:} The estimate \eqref{strong estimate 2nd approx} and the definition \eqref{mu epsilon} of $\mu_{\epsilon}$ yield  Theorem \ref{th:1}. \end{proof}

\section{Localizing the Singularity: The Interior Blow-up Case}\label{sec:loc:int}

\noindent
In this section we prove  Theorem \ref{th:2}. We assume that
$$x_0\in\Omega.$$
The proof goes through four steps. We first recall the Pohozaev identity. Let $U$ be a bounded smooth domain in $\R^{n}$,  let $p_{0} \in \R^{n}$ be  a point and let  $u \in C^{2}(\overline{U})$. We have
\begin{align}{\label{pohozaev identity}} 
\int \limits_{U}  \left( (x-p_{0}, \nabla u)  + \frac{n-2}{2} u  \right) \Delta u~ dx = \int \limits_{\partial U} \left( (x-p_{0}, \nu) \frac{|\nabla u|^{2}}{2}- \left(   (x-p_{0}, \nabla u)   + \frac{n-2}{2}u  \right) \partial_{\nu} u\right)  d\sigma
\end{align}
here $\nu$ is the outer normal to the boundary $\partial U$. Using the above Pohozaev Identity  we obtain the following  identity for the Hardy Sobolev equation: Let $U_{\epsilon}$ be  a family of smooth domains such that $ x_{\epsilon} \in U_{\epsilon} \subset \Omega$  for all  $\epsilon >0$.  One has for all  $\epsilon >0$
\begin{align}{\label{pohozaev identity for HS}} 
&\int \limits_{U_{\epsilon} }  \left( a  +  \frac{(x-x_{\epsilon}, \nabla a ) }{2}\right) u_{\epsilon}^{2} ~ dx - \frac{s_{\epsilon}(n-2)}{2(n-s_{\epsilon})}  \int \limits_{U_{\epsilon} } \frac{u_{\epsilon}^{2^{*}(s_{\epsilon}) }}{|x|^{s_{\epsilon}}} \frac{(x, x_{\epsilon})}{|x|^{2}} ~ dx=  \notag\\
&\int \limits_{\partial U_{\epsilon} }  (x-x_{\epsilon}, \nu) \left(\frac{|\nabla u_{\epsilon}|^{2}}{2} + \frac{a u_{\epsilon}^{2}}{2}-  \frac{1}{2^{*}(s_{\epsilon})}  \frac{u_{\epsilon}^{2^{*}(s_{\epsilon}) }}{|x|^{s_{\epsilon}}} \right)  d\sigma  -  \int \limits_{\partial U_{\epsilon} }     
\left(   (x-x_{\epsilon}, \nabla u_{\epsilon})   + \frac{n-2}{2} u_{\epsilon}  \right) \partial_{\nu} u_{\epsilon}   ~d\sigma
\end{align}

\medskip\noindent
Since $x_{0} \in \Omega$, let $\delta >0$ be such that $B_{x_{0}}(3\delta) \subset \Omega$. Note that then $\lim \limits_{\epsilon \to 0} |x_{\epsilon}|^{s_{\epsilon}}=1$,  and  it follows from \eqref{lim of scaling factor} that $\lim \limits_{\epsilon \to 0} \mu_{\epsilon}^{s_{\epsilon}}=1$. We will estimate each of the terms in the above Pohozaev identity and calculate the limit as $\epsilon \to $ and $\delta \to 0$. It will depend on the dimension $n$. 

\medskip\noindent{\bf Step 1:} We prove the following convergence outside $x_0$:
\begin{proposition}{\label{convergence to green's function 1}} We have that $\mu_{\epsilon}^{-\frac{n-2}{2}} u_{\epsilon} \longrightarrow b_{n} G_{x_{0}}$ in $C^{1}_{loc} ( \overline{\Omega }\setminus \{ x_{0} \})$ as $\epsilon \to 0$, where $b_n$ is as in \eqref{def:dn:bn} and $G$ is the Green's function for $\Delta+a$ with Dirichlet condition.
\end{proposition}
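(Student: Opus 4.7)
The strategy is to use the Green's representation formula for $u_\epsilon$ and show that, after rescaling by $\mu_\epsilon^{-(n-2)/2}$, the source term concentrates as a Dirac mass at $x_0$ with total weight $b_n$. Setting $\hat u_\epsilon := \mu_\epsilon^{-(n-2)/2} u_\epsilon$, the representation reads
\begin{equation*}
\hat u_\epsilon(x) = \int_\Omega G(x,y)\, F_\epsilon(y)\, dy,\qquad F_\epsilon(y) := \mu_\epsilon^{-(n-2)/2}\frac{u_\epsilon(y)^{2^*(s_\epsilon)-1}}{|y|^{s_\epsilon}},
\end{equation*}
so the problem reduces to identifying the limit of this integral for $x \neq x_0$, then upgrading to $C^1_{loc}$.

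First I would fix $\delta>0$ small so that $B_{x_0}(2\delta)\subset\subset\Omega$ and $0\notin B_{x_0}(2\delta)$, and split the integral over $A:=B_{x_0}(\delta)$ and $B:=\Omega\setminus A$. Since $x_\epsilon \to x_0$, for $y\in B$ and $\epsilon$ small we have $|y-x_\epsilon|\ge c>0$, so Theorem \ref{th:1} yields $u_\epsilon(y)\le C\mu_\epsilon^{(n-2)/2}$. Hence $F_\epsilon=O(\mu_\epsilon^{2-s_\epsilon})$ uniformly on $B$, and the contribution from $B$ to $\hat u_\epsilon(x)$ tends to $0$ uniformly in $x$.

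For the main contribution on $A$, change variables $y=x_\epsilon+k_\epsilon z$ and use the identity $u_\epsilon(x_\epsilon+k_\epsilon z)=\mu_\epsilon^{-(n-2)/2}v_\epsilon(z)$ from Theorem \ref{first blowup theorem}. A direct exponent computation, using $k_\epsilon^n=|x_\epsilon|^{ns_\epsilon/2}\mu_\epsilon^{n(2-s_\epsilon)/2}$ and $(n-2)(2^*(s_\epsilon)-1)/2 = (n+2-2s_\epsilon)/2$, gives
\begin{equation*}
\int_A G(x,y)F_\epsilon(y)\,dy = \mu_\epsilon^{(2-n)s_\epsilon/2}\,|x_\epsilon|^{ns_\epsilon/2}\int_{(A-x_\epsilon)/k_\epsilon} G(x,x_\epsilon+k_\epsilon z)\,\frac{v_\epsilon(z)^{2^*(s_\epsilon)-1}}{|x_\epsilon+k_\epsilon z|^{s_\epsilon}}\,dz.
\end{equation*}
Because $x_0\in\Omega$ and $0\in\partial\Omega$ imply $x_0\neq 0$, we have $|x_\epsilon|^{ns_\epsilon/2}\to 1$, and by \eqref{lim of scaling factor} also $\mu_\epsilon^{s_\epsilon}\to 1$, so the prefactor tends to $1$. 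I would then split the $z$-integral at $|z|=R$. On $|z|\le R$, dominated convergence using $v_\epsilon\to v$ in $C^1_{loc}(\R^n)$, the smoothness of $G(x,\cdot)$ near $x_0$ for $x$ away from $x_0$, and $|x_\epsilon+k_\epsilon z|^{s_\epsilon}\to 1$, yields
\begin{equation*}
\int_{B_0(R)} G(x,x_\epsilon+k_\epsilon z)\,\frac{v_\epsilon^{2^*(s_\epsilon)-1}}{|x_\epsilon+k_\epsilon z|^{s_\epsilon}}\,dz \longrightarrow G(x,x_0)\int_{B_0(R)} v^{2^*-1}\,dz.
\end{equation*}
For the tail $|z|\ge R$, Theorem \ref{th:1} gives $v_\epsilon(z)\le C(1+|z|)^{-(n-2)}$, and since $x_\epsilon+k_\epsilon z\in A$ forces $|x_\epsilon+k_\epsilon z|\ge |x_0|/2$, the tail is dominated by $CR^{-2+2s_\epsilon}$. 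Letting $\epsilon\to 0$ then $R\to\infty$ and using $\int_{\R^n}v^{2^*-1}\,dz=b_n$ yields the pointwise limit $\hat u_\epsilon(x)\to b_n\,G_{x_0}(x)$ for every $x\neq x_0$.

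Finally, to upgrade to $C^1_{loc}$, observe that $\hat u_\epsilon$ solves $\Delta\hat u_\epsilon+a\hat u_\epsilon=F_\epsilon$ with $\hat u_\epsilon|_{\partial\Omega}=0$, and the $B$-estimate above shows $\|F_\epsilon\|_{L^\infty(K)}\to 0$ on every compact $K\subset\overline{\Omega}\setminus\{x_0\}$. A uniform $L^\infty$ bound on $\hat u_\epsilon$ on enlarged compact neighborhoods follows from the same Green's integral estimate as above. Then standard $W^{2,p}$ elliptic regularity up to the boundary, combined with Sobolev embedding, gives uniform $C^{1,\alpha}$ bounds on $K$, and the pointwise limit identifies the unique $C^1_{loc}$ limit as $b_n G_{x_0}$. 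The main obstacle is the tail estimate in the scaled integral, where the $|y|^{-s_\epsilon}$ weight and the slow decay $(1+|z|)^{-(n+2-2s_\epsilon)}$ of $v_\epsilon^{2^*(s_\epsilon)-1}$ must be dominated simultaneously and uniformly in $\epsilon$; fortunately $x_0\neq 0$ makes the weight benign in the relevant region.
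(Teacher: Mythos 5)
Your proposal is correct and follows essentially the same route as the paper: Green's representation for $\mu_\epsilon^{-(n-2)/2}u_\epsilon$, a split into a small ball around the concentration point and its complement (the latter controlled via Theorem \ref{th:1}), change of variables to $v_\epsilon$ with the prefactor collapsing to $1$ by \eqref{lim of scaling factor} and $|x_0|>0$, dominated convergence using the pointwise bound of Theorem \ref{th:1}, and an elliptic-regularity bootstrap for the $C^1_{loc}$ upgrade. The only cosmetic differences are that you center the ball at $x_0$ rather than $x_\epsilon$ and carry out the $|z|\le R$ versus $|z|\ge R$ split explicitly where the paper simply invokes dominated convergence; your exponent bookkeeping and tail estimate are correct.
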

\begin{proof} We fix $y_{0} \in \Omega$ such that $y_{0} \neq x_{0}$.  We first claim that
\begin{align*}
\lim \limits_{\epsilon \to 0 } ~ \mu_{\epsilon}^{-\frac{n-2}{2}} u_{\epsilon}(y_{0}) \longrightarrow b_{n} G_{x_{0}}(y_{0}). 
\end{align*}
\noindent We prove the claim. We choose $\delta' \in (0, \delta) $ such that  $|x_{0}-y_{0}| \geq 3 \delta'$ and $|x_{0}| \geq 3 \delta'$. From Green's representation formula we have 
\begin{align*}
\mu_{\epsilon}^{-\frac{n-2}{2}}  u_{\epsilon}(y_{0})& =   \mu_{\epsilon}^{-\frac{n-2}{2}} \int \limits_{B_{x_{\epsilon}}(\delta')} G(x,y_{0}) \frac{ u_{\epsilon}^{2^{*}(s_{\epsilon})-1}(x) }{|x|^{s_{\epsilon}}} ~  dx + \mu_{\epsilon}^{-\frac{n-2}{2}} \int \limits_{\Omega \setminus B_{x_{\epsilon}}(\delta')} G(x,y_{0}) \frac{ u_{\epsilon}^{2^{*}(s_{\epsilon})-1}(x) }{|x|^{s_{\epsilon}}}~   dx 
\end{align*}
Using the bounds on $u_{\epsilon}$ obtained in  Theorem \ref{th:1}  and  the  estimates on the Green's function $G$ we get  as $\epsilon \to 0$
\begin{align*}
\mu_{\epsilon}^{-\frac{n-2}{2}} u_{\epsilon}(y_{0}) =    \mu_{\epsilon}^{-\frac{n-2}{2}} \int \limits_{B_{x_{\epsilon}}(\delta')} G(x,y_{0}) \frac{ u_{\epsilon}^{2^{*}(s_{\epsilon})-1}(x) }{|x|^{s_{\epsilon}}} ~  dx   +  O(\mu_{\epsilon}^{2-s_{\epsilon}} )  .
\end{align*}
Recall our definition of $v_{\epsilon}$ in Theorem \ref{first blowup theorem}. With a  change of variable, Theorem \ref{first blowup theorem} yields
\begin{align*}
\mu_{\epsilon}^{-\frac{n-2}{2}} u_{\epsilon}(y_{0}) = \left( \frac{ |x_{\epsilon} |^{s_{\epsilon}}}{\mu_{\epsilon}^{s_{\epsilon}}} \right)^{ \frac{n-2}{2}} \int \limits_{B_{0}(\delta' k^{-1}_{\epsilon} )} G(x_{\epsilon} + k_{\epsilon} x,y_{0}) \frac{ v_{\epsilon}^{2^{*}(s_{\epsilon})-1}(x) }{\left|   \frac{x_{\epsilon} }{|x_{\epsilon}|}+ \frac{k_{\epsilon}}{|x_{\epsilon}|} x\right|^{s_{\epsilon}}} ~  dx +  O(\mu_{\epsilon}^{2-s_{\epsilon}} ) 
\end{align*}
Lebesgue dominated convergence theorem,  Theorems \ref{first blowup theorem}  and  \ref{th:1} then yield 
\begin{align}\label{lim:out:0}
\lim \limits_{ \epsilon \to 0} \mu_{\epsilon}^{-\frac{n-2}{2}} u_{\epsilon}(y_{0}) = G(x_{0}, y_{0}) \int \limits_{\R^{n}} v^{2^{*}-1}  dx = b_nG(x_{0}, y_{0}).
\end{align}
This proves the claim. From \eqref{the eqn}, we get that
\begin{align*}
\Delta ( \mu_{\epsilon}^{-\frac{n-2}{2}} u_{\epsilon} ) + a (x)  (\mu_{\epsilon}^{-\frac{n-2}{2}} u_{\epsilon}) =& \mu_{\epsilon}^{2-s_{\epsilon}} \frac{ (\mu_{\epsilon}^{-\frac{n-2}{2}} u_{\epsilon})^{2^{*}(s_{\epsilon})-1} }{\left| x\right|^{s_{\epsilon}}} \qquad   \text{ in }  \Omega \notag\\
\mu_{\epsilon}^{-\frac{n-2}{2}} u_{\epsilon}=&0 \qquad \text{ on }~ \partial \Omega. 
\end{align*}
It follows from the pointwise estimate of Theorem \ref{th:1} that  $\mu_{\epsilon}^{-\frac{n-2}{2}} u_{\epsilon}$ is uniformly bounded in $L^\infty_{loc}(\Omega\setminus\{x_0\})$. It then follows from standard elliptic theory that the limit \eqref{lim:out:0} holds in $C^1_{loc}(\overline{\Omega}\setminus\{x_0\})$. This completes the proof  of Proposition \ref{convergence to green's function 1}.\end{proof}

\medskip\noindent{\bf Step 2:}
Next we show  that 
\begin{align}\label{convergence to v_{L^{2*}}}
 \lim \limits_{\epsilon \to0 }\int \limits_{B_{x_{\epsilon}}(\delta) } \frac{ u_{\epsilon}^{2^{*}(s_{\epsilon}) }}{|x|^{s_{\epsilon}}} \frac{(x, x_{\epsilon})}{|x|^{2}} ~ dx =  \left( \frac{1}{K(n,0)} \right)^{\frac{2^{*}}{2^{*}-2}}.
\end{align}
\begin{proof} Recall our definition of $v_{\epsilon}$ in   Theorem \ref{first blowup theorem}.  With a change of variable  we have 
\begin{align*}
 \int \limits_{B_{x_{\epsilon}}(\delta) } \frac{ u_{\epsilon}^{2^{*}(s_{\epsilon}) }}{|x|^{s_{\epsilon}}} \frac{(x, x_{\epsilon})}{|x|^{2}} ~ dx  =   \left(  \frac{ \left|x_{\epsilon}\right|^{s_{\epsilon}}}{\mu_{\epsilon}^{s_{\epsilon}}} \right) ^{\frac{n-2}{2}}
  \int \limits_{B_{0} (\delta / k_{\epsilon})}  \frac{(x_{\epsilon}+ k_{\epsilon}x, x_{\epsilon})}{|x_{\epsilon}+ k_{\epsilon}x |^{2}}  \frac{v_{\epsilon}(x)^{2^{*}(s_{\epsilon})}}{ \left| \frac{x_{\epsilon}}{|x_{\epsilon}|} + \frac{k_{\epsilon}}{|x_{\epsilon}|}x \right|^{s_{\epsilon}}} ~ dx 
\end{align*}
Passing to limits,   and using  Theorems \ref{first blowup theorem}  and \ref{th:1} we obtain by Lebesgue dominated convergence theorem
\begin{align*}
\lim \limits_{\epsilon \to 0} \int \limits_{B_{x_{\epsilon}} ( \delta)} \frac{u_{\epsilon}^{2^{*}(s_{\epsilon}) }}{|x|^{s_{\epsilon}}} \frac{(x, x_{\epsilon})}{|x|^{2}} ~ dx =  \int  \limits_{\R^{n}} v^{2^{*}} ~ dx =  \left( \frac{1}{K(n,0)} \right)^{\frac{2^{*}}{2^{*}-2}}.
\end{align*}
This proves \eqref{convergence to v_{L^{2*}}} and ends Step 2.\end{proof}

\medskip\noindent{\bf Step 3:} We define $a_\eps(x):=a(x)  +  \frac{1}{2}(x-x_{\epsilon}, \nabla a ) $ for $x\in\Omega$. We claim that
\begin{align}\label{est:l2}
\int_{B_{x_\eps}(\delta)}  a_\eps u_{\epsilon}^{2} ~ dx  =   \left \{ \begin{array} {lc}
          O(\delta \mu_{\epsilon})   &\text{for } n=3\hbox{ or }a\equiv 0,\\
          \mu^{2}_{\epsilon} \log \left( \frac{1}{k_{\epsilon}}\right) \left[64 \omega_{3} a(x_{0})+ o(1) \right] &\text{for } n=4,\\
           \mu_{\epsilon}^2 \left[ d_{n} a(x_{0})+ o(1)\right]      &\text{for } n\geq 5  .
            \end{array} \right. 
\end{align}
as $\epsilon \to 0$, where $d_n$ is as in \eqref{def:dn:bn}.
\begin{proof} We divide the proof in three steps.\par
\noindent {\bf Case 3.1:} We assume that $n\geq 5$. Recall our definition of $v_{\epsilon}$ in  Theorem \ref{first blowup theorem}. With a  change of variable we obtain  
\begin{align*}
  \mu_{\epsilon}^{-2}  \int \limits_{B_{x_{\epsilon}}(\delta) } a_\eps u_{\epsilon}^{2} ~ dx = 
  \left( \frac{k_{\epsilon}}{\mu_{\epsilon}}  \right)^{4}\int \limits_{B_{0}( \delta / k_{\epsilon}) } a_\eps(x_{\epsilon} + k_{\epsilon} x) v_{\epsilon}^{2} ~ dx. 
\end{align*}
Theorem \ref{th:1} reads $v_\eps(x)\leq C(1+|x|^2)^{1-n/2}$. Therefore, Lebesgue's theorem and Theorem \ref{first blowup theorem} yield \eqref{est:l2} when $n\geq 5$.

\noindent {\bf Case 3.2:} We assume that $n=4$ and we argue as in Case 3.1. With the pointwise control of Theorem \ref{th:1}, we get that
$$\int_{B_{0}( \delta / k_{\epsilon}) } a_\eps(x_{\epsilon} + k_{\epsilon} x) v_{\epsilon}^{2} ~ dx. =\log \left( \delta/ k_{\epsilon}\right)\left(64 \omega_{3}~ a(x_{0})+o(1)\right)\hbox{ as }\eps\to 0.$$

\smallskip\noindent{\bf Case 3.3:} we assume that $n=3$. It follows from Theorem \ref{th:1} that there exists $C>0$ such that $ \mu_{\epsilon}^{-1/2} u_{\epsilon}(x) \leq C|x -x_{\epsilon}|^{-1}$ for all $\epsilon >0$ and $x\in\Omega$. Therefore 
\begin{align*}
& \int_{B_{x_\eps}(\delta)} a_\eps u_{\epsilon}^{2} ~ dx =  O(\mu_{\epsilon})  \int_{B_{x_\eps}(\delta) }  |x|^{-2}\, dx =  O(\delta\mu_{\epsilon})  \hbox{ as }\eps\to 0.
\end{align*}

\end{proof}
\noindent{\bf Step 3:} We prove  Theorem \ref{th:2} for $n\geq 4$. From the Pohozaev identity  \eqref{pohozaev identity for HS} we have  
\begin{align}{\label{pohozaev identity for HS, n>4}}
& \mu_{\epsilon}^{-2} \int \limits_{B_{x_{\epsilon}}(\delta) }  \left( a  +  \frac{(x-x_{\epsilon}, \nabla a ) }{2}\right) u_{\epsilon}^{2} ~ dx - \mu_{\epsilon}^{-2}\frac{s_{\epsilon}(n-2)}{2(n-s_{\epsilon})}  \int \limits_{B_{x_{\epsilon}}(\delta) } \frac{ u_{\epsilon}^{2^{*}(s_{\epsilon}) }}{|x|^{s_{\epsilon}}} \frac{(x, x_{\epsilon})}{|x|^{2}} ~ dx  \notag\\
=& \mu_{\epsilon}^{n-4} \int \limits_{\partial B_{x_{\epsilon}}(\delta) }  (x-x_{\epsilon}, \nu) \left(\frac{|\nabla (  \mu_{\epsilon}^{-\frac{n-2}{2}} u_{\epsilon}  )|^{2}}{2} + \frac{a}{2}  ( \mu_{\epsilon}^{-\frac{n-2}{2}}  u_{\epsilon})^{2}-  \frac{\mu_{\epsilon}^{2-s_{\epsilon}}}{2^{*}(s_{\epsilon})}  \frac{( \mu_{\epsilon}^{-\frac{n-2}{2}}  u_{\epsilon})^{2^{*}(s_{\epsilon}) }}{|x|^{s_{\epsilon}}} \right)  d\sigma  \notag\\
& -  \mu_{\epsilon}^{n-4}\int \limits_{\partial B_{x_{\epsilon}}(\delta) }     
\left(   (x-x_{\epsilon}, \nabla ( \mu_{\epsilon}^{-\frac{n-2}{2}}  u_{\epsilon}))   + \frac{n-2}{2}( \mu_{\epsilon}^{-\frac{n-2}{2}}  u_{\epsilon}) \right) \partial_{\nu} ( \mu_{\epsilon}^{-\frac{n-2}{2}}  u_{\epsilon})   ~d\sigma
\end{align} 
Passing to the limits as $\epsilon \to 0$ in \eqref{pohozaev identity for HS, n>4}, using \eqref{convergence to v_{L^{2*}}}, \eqref{est:l2} and Theorem \ref{convergence to green's function 1}, we get Theorem \ref{th:2} when $n\geq 4$.

\medskip\noindent{\bf Step 4:} We now deal with the case of dimension $n=3$. Recall from the introduction that we write the Green's function $G$ as $G_x(y)=\frac{1}{4\pi|x-y|}+g_x(y)$ for all $x,y\in\Omega$, $x\neq y$, with $g_{x} \in C^{2}( \overline{\Omega } \setminus \{x\})\cap C^{0, \theta}(\Omega)$ for some $0 < \theta <1$. In particular, when $n=3$ or $a\equiv 0$, $g_x(x)$ is defined for all $x\in \Omega$. For any $x \in \Omega$,  $g_{x}$ satifies the equation
\begin{align*}
\Delta g_{x} + a g_{x}= -a/(4\pi|x-y|)  \hbox{ in }~ \Omega \setminus\{ x\} \hbox{ and } g_{x} (y)= \frac{-1}{\omega_{2}|x-y|}\hbox{ on } \partial \Omega .
\end{align*}
\noindent Note that  any  $x \in {\Omega}$ 
\begin{align}\label{green's function n=3 estimates}
\lim \limits_{r \to 0}~ \sup \limits_{ y \in \partial B_{x}(r)} ~ |y-x| |\nabla g_{x}(y)| =0 
\end{align}
The proof goes as in Hebey-Robert \cite{hebeyrobertACV}. We omit it here. From  the Pohozaev identity \eqref{pohozaev identity for HS}, multiplying both the sides by $\mu_{\epsilon}^{-1}$ we obtain  

\begin{align}{\label{pohozaev identity for HS, n=3}}
&\int \limits_{B_{x_{\epsilon}}(\delta) }  \left( a  +  \frac{(x-x_{\epsilon}, \nabla a ) }{2}\right) (\mu_{\epsilon}^{-1/2} u_{\epsilon})^{2} ~ dx - \frac{s_{\epsilon}}{2\mu_{\epsilon}(3-s_{\epsilon})}  \int \limits_{B_{x_{\epsilon}}(\delta) } \frac{u_{\epsilon}^{2^{*}(s_{\epsilon}) }}{|x|^{s_{\epsilon}}} \frac{(x, x_{\epsilon})}{|x|^{2}} ~ dx=  \notag\\
& \int \limits_{\partial B_{x_{\epsilon}}(\delta) }  (x-x_{\epsilon}, \nu) \left(\frac{|\nabla ( \mu_{\epsilon}^{-1/2} u_{\epsilon}) |^{2}}{2} + a \frac{  ( \mu_{\epsilon}^{-1/2} u_{\epsilon})^{2}}{2}- \frac{\mu_{\epsilon}^{2-s_{\epsilon}}}{2^{*}(s_{\epsilon})} \frac{ ( \mu_{\epsilon}^{-1/2} u_{\epsilon})^{2^{*}(s_{\epsilon}) }}{|x|^{s_{\epsilon}}} \right)  d\sigma \notag\\
 & -  \int \limits_{\partial B_{x_{\epsilon}}(\delta) }  \left(   (x-x_{\epsilon}, \nabla ( \mu_{\epsilon}^{-1/2} u_{\epsilon}))   + \frac{n-2}{2}  ( \mu_{\epsilon}^{-1/2} u_{\epsilon}) \right) \partial_{\nu}  ( \mu_{\epsilon}^{-1/2} u_{\epsilon})   ~d\sigma
\end{align} 
It follows from Proposition \ref{convergence to green's function 1} that 
\begin{align*}
&\lim \limits_{\epsilon \to 0} \int \limits_{\partial B_{x_{\epsilon}}(\delta) }  (x-x_{\epsilon}, \nu) \left(\frac{|\nabla ( \mu_{\epsilon}^{-1/2} u_{\epsilon}) |^{2}}{2} + a \frac{  ( \mu_{\epsilon}^{-1/2} u_{\epsilon})^{2}}{2}- \frac{\mu_{\epsilon}^{2-s_{\epsilon}}}{2^{*}(s_{\epsilon})} \frac{ ( \mu_{\epsilon}^{-1/2} u_{\epsilon})^{2^{*}(s_{\epsilon}) }}{|x|^{s_{\epsilon}}} \right)  d\sigma \notag\\
&  - \lim \limits_{\epsilon \to 0}  \int \limits_{\partial B_{x_{\epsilon}}(\delta) }  \left(   (x-x_{\epsilon}, \nabla ( \mu_{\epsilon}^{-1/2} u_{\epsilon}))   + \frac{n-2}{2}  ( \mu_{\epsilon}^{-1/2} u_{\epsilon}) \right) \partial_{\nu}  ( \mu_{\epsilon}^{-1/2} u_{\epsilon})   ~d\sigma \notag\\
& = b^{2}_{3} \int \limits_{\partial B_{x_{0}} (\delta) } \delta    \frac{|\nabla G_{x_{0}} |^{2}}{2} + \frac{ \delta a }{2}  (G_{x_{0}})^{2}     - \frac{ (x-x_{0}, \nabla G_{x_{0}} )^{2}}{\delta}   -  \frac{n-2}{2}  \frac{ (x-x_{0}, \nabla G_{x_{0}} )}{\delta}  G_{x_{0}}     ~d\sigma 
\end{align*}
Using  \eqref{green's function n=3 estimates}, we get that the right-hand-side goes to $ \frac{ b_3^2  }{ 2 } g_{x_{0}}(x_{0})$ as $\delta\to 0$. Putting this identity, \eqref{est:l2} when $n=3$, and \eqref{convergence to v_{L^{2*}}} in \eqref{pohozaev identity for HS, n=3} , we get Theorem \ref{th:2} in the case $n=3$. The proof is similar when $a\equiv 0$.

\section{Localizing the Singularity: The Boundary Blow-up Case}\label{sec:loc:bdy}

\noindent
This section is devoted to the proof of Theorem \ref{th:3}.

\subsection{Convergence to Singular Harmonic Functions}$~$
Here, $G$ is still the Green's function of the coercive operator $\Delta + a $ in $\Omega$ with Dirichlet boundary conditions. The following result for the asymptotic analysis of the Green's function is in the spirit of Proposition 5  of  \cite{FRgreens} and Proposition 12 of \cite{DRW}.
\begin{theorem}[\cites{FRgreens,DRW}]{\label{blow up for  greens}}
Let $(x_{\epsilon})_{\epsilon>0} \in \Omega$ and let  $(r_{\epsilon})_{\epsilon>0} \in (0, + \infty)$ be such that $\lim \limits_{\epsilon \to 0} r_{\epsilon}=0$. 
\begin{enumerate}
\item[(1)] Assume that $\lim_{\epsilon \to 0} \frac{d(x_{\epsilon}, \partial \Omega)}{r_{\epsilon}} = + \infty$. Then  for all $x, y \in \R^{n}$, $x \neq y$, we have that 
\begin{align*}
\lim \limits_{\epsilon \to 0}~ r_{\epsilon}^{n-2} G(x_{\epsilon}+ r_{\epsilon}x , x_{\epsilon}+ r_{\epsilon}y )= \frac{1}{(n-2) \omega_{n-1} |x-y|^{n-2}}
\end{align*}
where $\omega_{n-1}$ is the area of the $(n-1)$- sphere. Moreover for a fixed $x \in \R^{n}$, this convergence holds uniformly in $C^{2}_{loc}(\R^{n} \backslash \{ x\})$. \\
\item[(2)] Assume that $\lim_{\epsilon \to 0} \frac{d(x_{\epsilon}, \partial \Omega)}{r_{\epsilon}} = \rho \in [0, + \infty)$. Then $\lim \limits_{\epsilon \to 0} x_{\epsilon}= x_{0} \in \partial \Omega$. Let $\mathcal{T}$ be a parametrisation  of the boundary $\partial \Omega$ as in \eqref{def:T:bdy} around the point $p=x_{0}$.
We write  $ \mathcal{T}^{-1}(x_{\epsilon})= ((x_{\epsilon})_{1}, x'_{\epsilon})$. Then  for all $x, y \in \R^{n} \cap \{ x_{1} \leq 0\}$, $x \neq y$, we have that 
\begin{align*} 
&\lim \limits_{\epsilon \to 0} ~r_{\epsilon}^{n-2} G \left(  \mathcal{T}(  (0, x'_{\epsilon})+ r_{\epsilon}x) , \mathcal{T}(  (0, x'_{\epsilon})+ r_{\epsilon}y ) \right)   \notag\\
&= \frac{1}{(n-2) \omega_{n-1} |x-y|^{n-2}}- \frac{1}{(n-2) \omega_{n-1} | \pi(x)-y|^{n-2}}
\end{align*}
where $ \pi : \R^{n} \to \R^{n}$ defined by $\pi((x_{1}, x')) \mapsto (-x_{1}, x')$ is the reflection across the plane $\{ x: x_{1}=0\}$. 
Moreover for a fixed $x \in  \overline{\R^{n}_{-}}$, this convergence holds uniformly in $C^{2}_{loc}(  \overline{\R^{n}_{-}} \backslash \{ x\})$. \\
\end{enumerate} 
\end{theorem}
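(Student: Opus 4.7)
The strategy in both cases is the same: rescale the Green's function, obtain uniform local bounds and elliptic compactness, extract a $C^{2}_{loc}$--limit, and then identify the limit via uniqueness of the Green's function on the limit domain (all of $\R^{n}$ in case (1), the half--space $\R^{n}_{-}$ in case (2)).

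\smallskip\noindent\textbf{Case (1): interior rescaling.} Define, for $x,y$ in $\Omega_{\epsilon}:=(\Omega-x_{\epsilon})/r_{\epsilon}$,
\begin{equation*}
G_{\epsilon}(x,y):=r_{\epsilon}^{n-2}\,G(x_{\epsilon}+r_{\epsilon}x,\,x_{\epsilon}+r_{\epsilon}y).
\end{equation*}
A direct chain--rule computation, using $\Delta_{z}G(\cdot,w)+a\,G(\cdot,w)=\delta_{w}$ and the change of variable $z=x_{\epsilon}+r_{\epsilon}x$ (so $\delta_{w}(z)=r_{\epsilon}^{-n}\delta_{y}(x)$), shows that
\begin{equation*}
\Delta_{x}G_{\epsilon}(\cdot,y)+r_{\epsilon}^{2}\,a(x_{\epsilon}+r_{\epsilon}\cdot)\,G_{\epsilon}(\cdot,y)=\delta_{y}\qquad\text{in }\Omega_{\epsilon},\quad G_{\epsilon}(\cdot,y)=0\text{ on }\partial\Omega_{\epsilon}.
\end{equation*}
Under the hypothesis $d(x_{\epsilon},\partial\Omega)/r_{\epsilon}\to+\infty$, every compact $K\subset\R^{n}$ lies in $\Omega_{\epsilon}$ for small $\epsilon$. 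The standard pointwise estimate $0\leq G(z,w)\leq C|z-w|^{2-n}$ (valid since $\Delta+a$ is coercive on a bounded smooth domain) scales to $0\leq G_{\epsilon}(x,y)\leq C|x-y|^{2-n}$, so $(G_{\epsilon}(\cdot,y))_{\epsilon}$ is locally bounded on $\R^{n}\setminus\{y\}$. Since $r_{\epsilon}^{2}a(x_{\epsilon}+r_{\epsilon}\cdot)\to 0$ uniformly on compacts, standard elliptic estimates give boundedness in $C^{2}_{loc}(\R^{n}\setminus\{y\})$ and, by Arzel\`a--Ascoli and a diagonal argument, $C^{2}_{loc}$ convergence (up to a subsequence) to some $H$ with $\Delta H=0$ on $\R^{n}\setminus\{y\}$ and $|H(x)|\leq C|x-y|^{2-n}$.

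\smallskip\noindent\textbf{Identification in case (1).} Passing to the limit in the distributional identity (test against $\varphi\in C^{\infty}_{c}(\R^{n})$ and use the uniform $L^{1}_{loc}$ bound from the pointwise estimate) yields $\Delta H=\delta_{y}$ on $\R^{n}$. The decay $H(x)\to 0$ as $|x|\to\infty$ (immediate from $|H|\leq C|x-y|^{2-n}$) and Liouville's theorem for bounded harmonic functions on $\R^{n}$ then force $H(x)=((n-2)\omega_{n-1}|x-y|^{n-2})^{-1}$, giving uniqueness of the limit and hence convergence of the full family. Since the limit does not depend on the subsequence, the convergence is along the whole family.

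\smallskip\noindent\textbf{Case (2): boundary rescaling.} With $\mathcal{T}$ as in \eqref{def:T:bdy}, set
\begin{equation*}
\tilde{G}_{\epsilon}(x,y):=r_{\epsilon}^{n-2}\,G\!\left(\mathcal{T}((0,x'_{\epsilon})+r_{\epsilon}x),\,\mathcal{T}((0,x'_{\epsilon})+r_{\epsilon}y)\right),
\end{equation*}
for $x,y$ in $((U-(0,x'_{\epsilon}))/r_{\epsilon})\cap\{x_{1}<0\}$, which exhausts $\R^{n}_{-}$ as $\epsilon\to 0$. The pull--back metric $g_{\epsilon}:=\mathcal{T}^{*}\delta$ rescaled at factor $r_{\epsilon}$ converges to the Euclidean metric in $C^{k}_{loc}$ because $D_{0}\mathcal{T}=\mathbb{I}_{\R^{n}}$; correspondingly, $\tilde{G}_{\epsilon}$ solves an equation of the form $\Delta_{g_{\epsilon}}\tilde{G}_{\epsilon}+r_{\epsilon}^{2}(a\circ\mathcal{T})(\cdots)\tilde{G}_{\epsilon}=\delta_{y}$ in the rescaled half--space and vanishes on $\{x_{1}=0\}$. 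The same Green's function pointwise bound applies after composition with $\mathcal{T}$ (which is bi--Lipschitz near $0$), giving $\tilde{G}_{\epsilon}(x,y)\leq C|x-y|^{2-n}$. Elliptic estimates up to the flat boundary $\{x_{1}=0\}$ (now available because $\tilde{G}_{\epsilon}$ has zero Dirichlet data there) yield $C^{2}_{loc}(\overline{\R^{n}_{-}}\setminus\{y\})$ compactness. Any limit $\tilde{H}$ satisfies $\Delta\tilde{H}=\delta_{y}$ in $\R^{n}_{-}$, $\tilde{H}=0$ on $\{x_{1}=0\}$, with $|\tilde{H}|\leq C|x-y|^{2-n}$; uniqueness of the Dirichlet Green's function of the half--space (maximum principle applied to the difference) forces $\tilde{H}(x)=((n-2)\omega_{n-1})^{-1}\bigl(|x-y|^{2-n}-|\pi(x)-y|^{2-n}\bigr)$, which is exactly the asserted formula.

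\smallskip\noindent\textbf{Main obstacle.} The technical heart is case (2): one must verify that the Green's function pointwise bound, the boundary regularity, and the convergence of the metric and of the Dirichlet data all survive the rescaling uniformly up to $\{x_{1}=0\}$, and that the delta source on the right hand side passes correctly through the change of variables. Once this is in place, the identification of the limit reduces to the uniqueness statements above.
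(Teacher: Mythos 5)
The paper does not actually prove Theorem \ref{blow up for  greens}; it cites it from Robert's notes \cite{FRgreens} and from Druet--Robert--Wei \cite{DRW} (Proposition~12), so there is no in-paper proof to compare against. That said, your rescaling--compactness--uniqueness plan is the correct and standard route, and it is essentially the approach taken in those references: rescale the Green's function, use the uniform pointwise bound $0\leq G(z,w)\leq C|z-w|^{2-n}$ to control the rescaled family, pass the delta source through the distributional identity (using that the lower-order term $r_\epsilon^2 a(x_\epsilon+r_\epsilon\cdot)G_\epsilon$ vanishes in $L^1_{loc}$), and identify the limit by uniqueness of the Green's function on $\R^n$ (Case~1) resp.\ on $\R^n_-$ (Case~2). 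The places where your sketch is light but defensible are: (i) the uniqueness argument on the half-space needs the decay $|\tilde H(x)|\leq C|x-y|^{2-n}$ explicitly to rule out nontrivial harmonic additions (a Phragm\'en--Lindel\"of-type step, not merely ``maximum principle applied to the difference''); (ii) the elliptic regularity in Case~2 must be Schauder estimates up to the flat boundary for a metric $\tilde g_\epsilon=\mathcal{T}^*\delta$ that converges in $C^k_{loc}$ to the Euclidean metric, which requires the smoothness of $\mathcal{T}$ beyond bi-Lipschitz; and (iii) the degenerate case $y_1=0$ in Case~2, where the source point sits on $\partial\R^n_-$, should be handled separately (there the limit is zero, consistent with the image-charge formula since $|\pi(x)-y|=|x-y|$). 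None of these is a gap in the strategy; they are details one would have to fill in, and they are indeed worked out in the cited sources.
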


The next proposition shows that   the pointwise behaviour of the blowup sequence $(u_{\epsilon})_{\epsilon >0}$ is well approximated  by bubbles. Note that the following proposition holds with $x_0\in \overline{\Omega}$, in the interior or on the boundary. We omit the proof as it goes exactly like   the  proof of  Proposition 13 in  \cite{DRW} .
\begin{proposition}{\label{conv to bubble}}
We set for all  $ \epsilon > 0$
\begin{align*}
U_{\epsilon}(x)=  \left( \frac{ k_{\epsilon}}{ k_{\epsilon}^{2} +\frac{|x-x_{\epsilon}|^{2}}{n(n-2)}  }\right)^{\frac{n-2}{2}} 
\end{align*}
Suppose that the sequence  $\left(u_{\epsilon} \right)_{\epsilon>0} \in H^{2}_{1,0}(\Omega)$, where for  each  $\epsilon >0$,  $u_{\epsilon}$  satisfies  $(\ref{the eqn})$ and $(\ref{min energy condition})$, is a  {\emph{blowup sequence}}. We let $x_0:=\lim_{\epsilon\to 0}x_\epsilon$. Let $(y_{\epsilon})_{\epsilon >0}$ be a sequence of points in $ \overline{\Omega}$. We have
\begin{enumerate}
\item[(1)] If $\lim_{\epsilon\to 0}y_\epsilon=y_0\neq x_0$, then $\lim \limits_{ \epsilon \to 0} \mu_{\epsilon}^{-\frac{n-2}{2}} u_{\epsilon}(y_{\epsilon}) = b_n G_{x_0}(y_{0}) $ (see Proposition \ref{convergence to green's function 1}).
\item[(2)] If $\lim_{\epsilon\to 0}y_\epsilon=x_0$ and $\lim \limits_{\epsilon \to 0} d(x_{\epsilon}, \partial \Omega)>0$, then 
\begin{align*}
u(y_{\epsilon}) = (1+ o(1)) U_{\epsilon}(y_{\epsilon})  \qquad ~ \text{ as } \epsilon \to 0 
\end{align*}
\item[(3)]  If $\lim_{\epsilon\to 0}y_\epsilon=x_0$ and $\lim \limits_{\epsilon \to 0} d(x_{\epsilon}, \partial \Omega)=0$, then 
\begin{align*}
u(y_{\epsilon}) = (1+ o(1)) \left( U_{\epsilon}(y_{\epsilon}) - {\tilde{U}}_{\epsilon}(y_{\epsilon}) \right)  \qquad  ~ \text{ as } \epsilon \to 0 
\end{align*}
where for $ \epsilon > 0$
\begin{align*}
\tilde{U}_{\epsilon}(x)=   \left( \frac{k_{\epsilon}}{ k_{\epsilon}^{2} + \frac{\left|x- \pi_{\mathcal{T}}(x_{\epsilon} )\right|^{2} }{n(n-2)}}\right)^{\frac{n-2}{2}} 
\end{align*}
where $\pi_{\mathcal{T}}= \mathcal{T} \circ \pi \circ \mathcal{T}^{-1} $. Here, $\mathcal{T}$ and $\pi$  are as in Theorem \ref{blow up for greens}.\end{enumerate}
\end{proposition}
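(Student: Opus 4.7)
Case (1) is a direct consequence of Proposition~\ref{convergence to green's function 1}, whose proof adapts verbatim whether $x_0$ is interior or on the boundary since it only uses the pointwise control of Theorem~\ref{th:1} and the profile convergence of Theorem~\ref{first blowup theorem}. For cases (2) and (3) the backbone is Green's representation formula
\begin{align*}
u_\epsilon(y_\epsilon) = \int_\Omega G(x, y_\epsilon) \frac{u_\epsilon(x)^{2^*(s_\epsilon)-1}}{|x|^{s_\epsilon}}\, dx,
\end{align*}
combined with the pointwise bubble bound of Theorem~\ref{th:1}, the convergence $v_\epsilon \to v$ in $C^1_{loc}(\R^n)$ of Theorem~\ref{first blowup theorem}, and the rescaled Green's function asymptotics of the preceding theorem. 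The argument runs parallel to Proposition~13 of \cite{DRW} and Proposition~5 of \cite{FRgreens}.

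\textbf{First regime: $|y_\epsilon - x_\epsilon| = O(k_\epsilon)$.} Up to a subsequence write $y_\epsilon = x_\epsilon + k_\epsilon w_\epsilon$ with $w_\epsilon \to w_0$. The $C^1_{loc}$ convergence $v_\epsilon \to v$ and formula \eqref{bubble} give $u_\epsilon(y_\epsilon) = \mu_\epsilon^{-(n-2)/2}(v(w_0)+o(1))$, and since $k_\epsilon/\mu_\epsilon \to 1$ by \eqref{lim of scaling factor} this equals $(1+o(1))U_\epsilon(y_\epsilon)$ in case (2). In case (3), the bound $d(x_\epsilon,\partial\Omega)/k_\epsilon \to +\infty$ from Theorem~\ref{first blowup theorem} forces $|y_\epsilon - \pi_\mathcal{T}(x_\epsilon)|/k_\epsilon \to +\infty$, so $\tilde U_\epsilon(y_\epsilon) = o(U_\epsilon(y_\epsilon))$ and the stated expansion holds.

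\textbf{Second regime: $|y_\epsilon - x_\epsilon|/k_\epsilon \to +\infty$.} Split the Green's representation integral into $B_{x_\epsilon}(Rk_\epsilon)$ and its complement. On the complement, Theorem~\ref{th:1} together with the standard pointwise bound on $G$ and the concentration estimate of Proposition~\ref{conc of energy} yield a contribution $o(U_\epsilon(y_\epsilon))$ as $\epsilon \to 0$ then $R \to +\infty$; here the perturbation $s_\epsilon$ is absorbed using $(|x_\epsilon|/\mu_\epsilon)^{s_\epsilon} \to 1$ from \eqref{lim of scaling factor}. On $B_{x_\epsilon}(Rk_\epsilon)$, change variables $x = x_\epsilon + k_\epsilon z$ and apply dominated convergence (justified by Theorem~\ref{th:1}): in case (2) one uses the interior Green's asymptotic with $r_\epsilon = |y_\epsilon-x_\epsilon|$, while in case (3) the boundary asymptotic produces both the fundamental solution and its reflection across $\partial\Omega$. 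Combined with the convolution identity
\begin{align*}
v(w) = \int_{\R^n} \frac{v(z)^{2^*-1}}{(n-2)\omega_{n-1} |w - z|^{n-2}}\, dz
\end{align*}
deduced from $\Delta v = v^{2^*-1}$ and the explicit form \eqref{bubble}, this produces $(1+o(1))U_\epsilon(y_\epsilon)$ in case (2) and $(1+o(1))(U_\epsilon(y_\epsilon) - \tilde U_\epsilon(y_\epsilon))$ in case (3).

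\textbf{Main obstacle.} The subtle point is case (3) in the second regime: the reflected fundamental solution supplied by the Green's function asymptotic must be matched with the image bubble $\tilde U_\epsilon$, whose reflection is defined through the chart $\mathcal{T}$. Since $D_0\mathcal{T}=\mathbb{I}_{\R^n}$, the chart $\mathcal{T}$ differs from the flat reflection by terms of curvature order, and one must verify that these corrections produce only $o(U_\epsilon(y_\epsilon))$ contributions after integration against $v^{2^*-1}$, uniformly as $|y_\epsilon - x_\epsilon|/k_\epsilon \to +\infty$. This is handled by localizing within a small neighborhood of $x_0$ where $\mathcal{T}$ is $C^2$-close to the identity and absorbing the remainder via Theorem~\ref{th:1}, exactly as in the proof of Proposition~13 of \cite{DRW}.
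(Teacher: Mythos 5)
Your sketch follows the same strategy the paper references: the paper explicitly omits the proof and cites Proposition~13 of \cite{DRW}, and your outline (Green's representation split at scale $Rk_\epsilon$, rescaled Green's function asymptotics from Theorem~\ref{blow up for  greens}, the convolution identity for the bubble, and the separate treatment of the near regime $|y_\epsilon-x_\epsilon|=O(k_\epsilon)$) is precisely that argument adapted to the present Hardy-Sobolev setting. One small point to be careful about: in case~(1) the proof of Proposition~\ref{convergence to green's function 1} uses $|x_0|\geq 3\delta'$ to keep the ball $B_{x_\epsilon}(\delta')$ away from the weight's singularity at $0$, which fails when $x_0=0\in\partial\Omega$; the argument still goes through because $|x_\epsilon|/k_\epsilon\to\infty$ controls the rescaled weight and $s_\epsilon\to 0$ makes $|x|^{-s_\epsilon}$ locally integrable with uniformly bounded $L^p$ norms, but this should be said rather than claimed ``verbatim''.
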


\medskip \noindent
Using  Proposition \ref{conv to bubble}, we derive the following  when the sequence of blowup points converge to a point on the  boundary
 \begin{proposition}{\label{convergence to green's function 2}} Let $\left(u_{\epsilon} \right)_{\epsilon>0} \in H^{2}_{1,0}(\Omega)$ be such that for  each  $\epsilon >0$,  $u_{\epsilon}$  satisfies  $(\ref{the eqn})$ and $(\ref{min energy condition})$. We assume that $u_\epsilon\rightharpoonup 0$ weakly in $H^{2}_{1,0}(\Omega)$ as $\epsilon\to 0$. We let $x_0:=\lim_{\epsilon\to 0}x_\epsilon$. Let  $r _{\epsilon}= d(x_{\epsilon}, {\partial \Omega})$. We assume that $\lim \limits_{\epsilon \to 0} r_{\epsilon}=0$. Therefore, $\lim \limits_{\epsilon \to 0} x_{\epsilon} = x_{0}\in\partial\Omega$.  Let $\mathcal{T}$ be a parametrisation  of the boundary $\partial \Omega$ as in \eqref{def:T:bdy} around the point $p=x_{0}$. We write  $ \mathcal{T}^{-1}(x_{\epsilon})= ((x_{\epsilon})_{1}, x'_{\epsilon})$. For $\epsilon >0$,  let
\begin{align*}
 \tilde{v}_{\epsilon}(x) := \frac{r_{\epsilon}^{n-2}}{\mu_{\epsilon}^{\frac{n-2}{2}}} u_{\epsilon} \circ  \mathcal{T}(  (0, x'_{\epsilon})+ r_{\epsilon}x)   \qquad \text { for } ~x \in \frac{U- (0, x'_{\epsilon}) }{r_{\epsilon}} \cap \{ x_{1} \leq 0\}
 \end{align*}
 Then
 \begin{align*}
 \lim \limits_{\epsilon \to 0}  \tilde{v}_{\epsilon}(x)  =  (n(n-2))^{\frac{n-2}{2}} \left(  \frac{1}{ |x-\theta_{0} |^{n-2}}- \frac{1}{ | x-\pi(\theta_{0})|^{n-2}}  \right)  \hbox{ in }~ C^{1}_{loc}( \overline{\R^{n}_{-}}\setminus \{ \theta_{0} \} )
 \end{align*}
where 
\begin{align}\label{def:theta}
\theta_{0} = \lim \limits_{\epsilon \to 0} \theta_{\epsilon} , \qquad  \theta_{\epsilon}= \left( \frac{(x_{\epsilon})_{1}}{r_{\epsilon}}, 0 \right)\in \R^{n}_{-}
\end{align}
and $ \pi : \R^{n} \to \R^{n}$ defined by $\pi((x_{1}, x')) \mapsto (-x_{1}, x')$ is the reflection across the plane $\{ x: x_{1}=0\}$. 
 \end{proposition}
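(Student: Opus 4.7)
The plan is to follow the blueprint of Proposition \ref{convergence to green's function 1}, substituting the interior Green's function asymptotics with the boundary ones from Theorem \ref{blow up for greens}(2). Starting from the Green's representation formula applied to $u_\eps$ and the definition of $\tilde v_\eps$, one has
\begin{align*}
\tilde v_\eps(x) = \frac{r_\eps^{n-2}}{\mu_\eps^{(n-2)/2}} \int_\Omega G(z, y_x^\eps)\, \frac{u_\eps(z)^{2^{*}(s_\eps)-1}}{|z|^{s_\eps}}\, dz,
\end{align*}
where $y_x^\eps := \mathcal{T}((0,x'_\eps) + r_\eps x)$. I would split this integral into the concentration region $B_{x_\eps}(Rk_\eps)$ and its complement in $\Omega$.

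In the inner region, the change of variables $z = x_\eps + k_\eps w$ together with the rescaling $u_\eps(x_\eps+k_\eps w) = \mu_\eps^{-(n-2)/2}v_\eps(w)$ from Theorem \ref{first blowup theorem} yields a scalar prefactor which, after bookkeeping with $k_\eps = |x_\eps|^{s_\eps/2}\mu_\eps^{(2-s_\eps)/2}$, reduces to $r_\eps^{n-2}\cdot(|x_\eps|^{s_\eps}/\mu_\eps^{s_\eps})^{(n-2)/2}\cdot(|x_\eps|/|x_\eps+k_\eps w|)^{s_\eps}$, where the last two factors tend to $1$ by \eqref{lim of scaling factor}. Since $k_\eps = o(r_\eps)$ (by Theorem \ref{first blowup theorem}, $d(x_\eps,\partial\Omega)/k_\eps \to +\infty$ and $r_\eps = d(x_\eps,\partial\Omega)$), the rescaled source point obtained by expressing $x_\eps+k_\eps w$ in the coordinates $(\mathcal{T}^{-1}(z) - (0,x'_\eps))/r_\eps$ tends to $\theta_0$ uniformly for $w$ in any fixed compact set of $\rn$. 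Hence Theorem \ref{blow up for greens}(2), applied with ratio $d(x_\eps,\partial\Omega)/r_\eps = 1$ and continuity of the limiting kernel in its arguments, gives
\begin{align*}
\lim_{\eps \to 0}\, r_\eps^{n-2}\, G(x_\eps + k_\eps w,\, y_x^\eps) = \frac{1}{(n-2)\omega_{n-1}}\left(\frac{1}{|\theta_0-x|^{n-2}} - \frac{1}{|\pi(\theta_0)-x|^{n-2}}\right)
\end{align*}
locally uniformly in $w$, for every fixed $x \in \overline{\R^n_-}\setminus\{\theta_0\}$. Combined with the $C^1_{loc}(\rn)$ convergence $v_\eps \to v$ and Lebesgue's dominated convergence theorem (with an $L^1$ majorant afforded by Theorem \ref{th:1}), the inner integral converges to the above kernel times $\int_{B_0(R)} v^{2^{*}-1}\,dw$, which tends to $b_n$ as $R\to +\infty$.

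For the complement $\Omega\setminus B_{x_\eps}(Rk_\eps)$, the sharp pointwise bound of Theorem \ref{th:1} together with the standard estimate $G(z,y_x^\eps) \leq C|z-y_x^\eps|^{-(n-2)}$ produces a tail contribution that is $o_R(1)$ as $R\to +\infty$, uniformly in $\eps$, by an integral estimate analogous to Step 2 in the proof of Theorem \ref{th:1}. The constant $b_n/((n-2)\omega_{n-1}) = (n(n-2))^{(n-2)/2}$ is identified by integrating $\Delta v = v^{2^{*}-1}$ over $B_0(R)$ (with the sign convention $\Delta = -\mathrm{div}\,\nabla$) and using the asymptotic $v(x) \sim (n(n-2))^{(n-2)/2}|x|^{-(n-2)}$ at infinity. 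This yields the claimed pointwise limit. To upgrade this to $C^1_{loc}(\overline{\R^n_-}\setminus\{\theta_0\})$ convergence, I would observe that $\tilde v_\eps$ satisfies a uniformly elliptic equation on the rescaled domain with right-hand side uniformly bounded in $L^p_{loc}$ for some $p>n$ away from $\theta_0$ (and vanishing Dirichlet data on $\{x_1=0\}$), so that the argument of Step 1.3 in Lemma \ref{blowup lemma} applies verbatim. The main technical obstacle is the careful tracking of the $\mu_\eps^{s_\eps}$/$|x_\eps|^{s_\eps}$ prefactor and the uniform $R$--tail control in the presence of the vanishing singular weight $|z|^{-s_\eps}$, which is the feature specific to the present asymptotically vanishing Hardy--Sobolev regime.
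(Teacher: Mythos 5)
Your proposal is correct, but it follows a genuinely different route from the paper. The paper invokes Proposition~\ref{conv to bubble} — the sharp ``bubble minus reflected bubble'' pointwise approximation of $u_\epsilon$, stated without proof by reference to \cite{DRW} — to write $\tilde v_\epsilon(x)$ in closed form (equation~\eqref{eq:ve}), passes to the limit using the chart $\mathcal{T}$, and then upgrades to $C^1_{loc}$ via the elliptic bootstrap of Step~1.2 of Lemma~\ref{blowup lemma}. You bypass Proposition~\ref{conv to bubble} entirely and argue from the Green's representation formula, splitting at scale $Rk_\epsilon$, rescaling the inner integral and feeding in the boundary Green's function asymptotics of Theorem~\ref{blow up for greens}(2), and controlling the tail with Theorem~\ref{th:1}; in effect you re-derive inline the special case of Proposition~\ref{conv to bubble} actually needed, which is more self-contained at the cost of an explicit tail estimate. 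That estimate is trickier here than in Proposition~\ref{convergence to green's function 1}, because the pole $y_x^\epsilon=\mathcal{T}((0,x'_\epsilon)+r_\epsilon x)$ sits at distance of order $r_\epsilon$ from $x_\epsilon$: $\Omega\setminus B_{x_\epsilon}(Rk_\epsilon)$ must be further split into an annulus $Rk_\epsilon<|z-x_\epsilon|\lesssim r_\epsilon$ away from $y_x^\epsilon$ (contribution $O(R^{-2+2s_\epsilon})$ after using $k_\epsilon^{s_\epsilon},r_\epsilon^{s_\epsilon},|x_\epsilon|^{s_\epsilon}\to 1$, vanishing as $R\to\infty$), a small ball around $y_x^\epsilon$, and a far region, the latter two being $O((\mu_\epsilon/r_\epsilon)^{2-s_\epsilon})=o_\epsilon(1)$; so the tail is really $o_R(1)+o_\epsilon(1)$ rather than ``$o_R(1)$ uniformly in $\epsilon$'', which still suffices for the iterated limit but should be stated that way. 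Also, applying Theorem~\ref{blow up for greens}(2) with the rescaled pole $\theta_\epsilon+(k_\epsilon/r_\epsilon)w\to\theta_0$ rather than a fixed point needs a short diagonal/continuity argument (or an appeal to the symmetry $G(z,y)=G(y,z)$) beyond the theorem's stated $C^2_{loc}$ uniformity in the second slot for a fixed first slot. With these caveats filled in, your normalization $b_n/((n-2)\omega_{n-1})=(n(n-2))^{(n-2)/2}$ and the elliptic upgrade to $C^1_{loc}$ are both correct and reach the paper's conclusion.
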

 \begin{proof}
Since $D_{0} \mathcal{T} = \mathbb{I}_{\R^{n}}$  we have:  $ d(x_{\epsilon}, \partial \Omega)= \left( 1+ o(1) \right) |(x_{\epsilon})_{1}|$.  Let $\theta_{\epsilon}$ be as in \eqref{def:theta}. Then we have that $\theta_{0}= \lim \limits_{\epsilon \to 0} \theta_{\epsilon} = (-1,0) \in \R^{n}_{-}$ and $\pi (\theta_{0}) = (1,0) \in \R^{n}_{+}$. We fix $R>0$.  $\tilde{v}_{\epsilon}$   is defined in  $B_{0}(R) \cap \{ x_{1} \leq 0\} $ for $\epsilon >0$ small. It follows from  the strong upper bounds obtained in Theorem \ref{th:1} that  there exists a  constant $C>0$ such that for  $\epsilon >0$  small we have
 \begin{align*}
 0 \leq \tilde{v}_{\epsilon}(x) \leq C   \left(    \frac{ r^{2}_{\epsilon}}{ |  \mathcal{T}(  (0, x'_{\epsilon})+ r_{\epsilon}x) -x_{\epsilon}|^{2} } \right)^{\frac{n-2}{2}} \qquad \text{ for } x \in B_{0}(R) \cap \{ x_{1} < 0 \}
 \end{align*}
For any    $x \in B_{0}(R) \cap \{ x_{1} \leq 0\} $ we get from Proposition \ref{conv to bubble}   that as $\epsilon \to 0$
\begin{align}
 \tilde{v}_{\epsilon}(x)
= (1+ o(1))  \left( \frac{k_{\epsilon}}{\mu_{\epsilon}} \right)^{\frac{n-2}{2} }    \left(  \left( \frac{ 1}{\left( \frac{k_{\epsilon}}{r_{\epsilon}} \right)^{2} +\frac{|  \mathcal{T}(  (0, x'_{\epsilon})+ r_{\epsilon}x) -x_{\epsilon}|^{2}}{n(n-2) r^{2}_{\epsilon}}  }\right)^{\frac{n-2}{2}}  -   \left( \frac{1 }{ \left( \frac{k_{\epsilon}}{r_{\epsilon}} \right)^{2}+ \frac{\left| \mathcal{T}(  (0, x'_{\epsilon})+ r_{\epsilon}x) - \pi_{\mathcal{T}}^{-1}(x_{\epsilon} )\right|^{2} }{n(n-2) r^{2}_{\epsilon}} }\right)^{\frac{n-2}{2}}   \right) \label{eq:ve}
\end{align}
Fom the properties of the boundary map $\mathcal{T}$, one then gets 
\begin{align}
\lim \limits_{\epsilon \to 0} \tilde{v}_{\epsilon}(x)=   \frac{(n(n-2))^{\frac{n-2}{2}} }{ |x-(1,0) |^{n-2}}- \frac{(n(n-2))^{\frac{n-2}{2}} }{ | x+(1,0) |^{n-2}}  \text{ for } x \in \left( B_{0}(R)  \setminus \{ (1,0)\} \right)\cap \{ x_{1} \leq 0\}\label{lim:ve}
\end{align}
For $i,j=1, \ldots,n$, we let $({\tilde{g}}_{\epsilon})_{ij}(x)= \left( \partial_{i} \mathcal{T}\left( (0, x_{\epsilon}') +r_{\epsilon}x \right), \partial_{j} \mathcal{T}\left( (0, x_{\epsilon}') +r_{\epsilon}x \right) \right)$,  the induced metric on the domain $B_{0}(R) \cap \{ x_{1} <0\}$, and let $\Delta_{g}$ denote the Laplace-Beltrami operator with respect to the metric $g$. From eqn $(\ref{the eqn})$ it follows that  given  any $R>0$,   $  \tilde{v}_{\epsilon}$  weakly satisfies  the  following equation   for  $\epsilon >0$ sufficiently small 
\begin{eqnarray}
 \left \{ \begin{array} {lc}
          \Delta_{\tilde{g}_{\epsilon}}  \tilde{v}_{\epsilon}  + r^{2}_{\epsilon} \left( a \circ   \mathcal{T}(  (0, x'_{\epsilon})+ r_{\epsilon}x)   \right)    \tilde{v}_{\epsilon} = \left( \frac{\mu_{\epsilon}}{r_{\epsilon}}  \right)^{2-s_{\epsilon}}\frac{   \tilde{v}_{\epsilon}^{2^{*}(s_{\epsilon})-1} }{\left| \frac{  \mathcal{T}(  (0, x'_{\epsilon})+ r_{\epsilon}x)  }{ r_{\epsilon} }\right|^{s_{\epsilon}}} &  \text{in }  B_{0}(R) \cap  \{ x_{1 } <0 \} \\  \\
            \tilde{v}_{\epsilon}=0  & \text{on }  B_{0}(R) \cap  \{ x_{1 } =0 \}
            \end{array} \right. 
\end{eqnarray}
Arguing as in Step 1.2 of the proof of Lemma \ref{blowup lemma}, we get that the convergence of $\tilde{v}_{\epsilon}  $ holds in $C^{1}_{loc}( \overline{\R^{n}_{-}}\setminus \{ \theta_{0} \} )$. This completes the proof  of Proposition \ref{convergence to green's function 2}.
 \end{proof}

 \subsection{Estimates on the blow up rates: The Boundary Case} $~$
Suppose that the sequence  of blow up points $(x_{\epsilon})_{\epsilon>0}$ converges to a point on  the boundary, i.e suppose $  \lim \limits_{\epsilon \to 0 } x_{\epsilon} = x_{0} \in \partial \Omega$.  We let  
 \begin{align}\label{eq:12}
 r_{\epsilon}= d(x_{\epsilon}, \partial \Omega)
 \end{align}
 Then $ \lim \limits_{ \epsilon \to 0} r_{\epsilon} =0$ and from \eqref{second lim}, we have as $\epsilon \to 0 $: $\mu_{\epsilon} = o(r_{\epsilon})$ and $ k_{\epsilon} = o(r_{\epsilon})$. We apply the Pohozaev identity for the Hardy Sobolev equation \eqref{pohozaev identity for HS}  to the domain $B_{x_{\epsilon}}(r_{\epsilon}/2)$. Note that since $\frac{d(x_{\epsilon}, \partial \Omega)}{r_{\epsilon}}=1$ for all $\epsilon >0$, so    $\overline{B_{x_{\epsilon}}(r_{\epsilon}/2)}  \subset\subset \Omega$  for $\epsilon >0$ small. The Pohozaev identity \eqref{pohozaev identity for HS} gives us 
\begin{align}{\label{pohozaev identity for HS 2}} 
&\int_{B_{x_{\epsilon}}(r_{\epsilon}/2)}  \left( a  +  \frac{(x-x_{\epsilon}, \nabla a ) }{2}\right) u_{\epsilon}^{2} ~ dx - \frac{s_{\epsilon}(n-2)}{2(n-s_{\epsilon})}  \int_{B_{x_{\epsilon}}(r_{\epsilon}/2)}  \frac{u_{\epsilon}^{2^{*}(s_{\epsilon}) }}{|x|^{s_{\epsilon}}} \frac{(x, x_{\epsilon})}{|x|^{2}} ~ dx =\notag\\
&\int_{\partial B_{x_{\epsilon}}(r_{\epsilon}/2)}  (x-x_{\epsilon}, \nu) \left(\frac{|\nabla u_{\epsilon}|^{2}}{2} + \frac{a u_{\epsilon}^{2}}{2}-  \frac{1}{2^{*}(s_{\epsilon})}  \frac{u_{\epsilon}^{2^{*}(s_{\epsilon}) }}{|x|^{s_{\epsilon}}} \right) - \left(   (x-x_{\epsilon}, \nabla u_{\epsilon})   + \frac{n-2}{2} u_{\epsilon}  \right) \partial_{\nu} u_{\epsilon} ~d\sigma  
\end{align}
for all  $\epsilon >0$ small. We now estimate each of the terms in the integral above. Theorem \ref{th:3} will be a consequence of the following theorem:
\begin{theorem}\label{th:bdy:bis} Let $\Omega$, $a$, $(s_{\epsilon})_{\epsilon >0}$,  $\left(u_{\epsilon} \right)_{\epsilon>0} \in H^{2}_{1,0}(\Omega)$ as in Theorem \ref{th:3}. 
Assume that \eqref{eq:12} holds and $  \lim \limits_{\epsilon \to 0 } x_{\epsilon} = x_{0} \in \partial \Omega$. Then 
\begin{enumerate}
\item[(1)] If $n=3$ or $a\equiv 0$, then  as $\epsilon \to 0$
\begin{align}\label{res:1}
 \lim_{\epsilon\to 0} \frac{s_{\epsilon}r_{\epsilon}^{n-2}}{\mu_{\epsilon}^{n-2}}=  \frac{n^{n-1}(n-2)^{n-1} K(n,0)^{{n/2}} \omega_{n-1}  }{2^{n-2}}.
\end{align}
Moreover, $d(x_\epsilon,\partial\Omega)=(1+o(1))|x_\epsilon|$ as $\epsilon\to 0$. In particular $x_0=0$.
\\
\item[(2)]  If $n=4$. Then  as $\epsilon \to 0$
\begin{align}\label{res:2}
  \frac{s_{\epsilon}}{4} \left(  K(4,0)^{-2} + o(1) \right) - \left( \frac{\mu_{\epsilon}}{r_{\epsilon}}\right)^{2} \left(
  32  \omega_{3} + o(1)  \right)=   \mu^{2}_{\epsilon} \log \left( \frac{r_{\epsilon}}{\mu_{\epsilon}}\right) \left[d_{4} a(x_{0})+ o(1) \right] 
\end{align}
and
\begin{equation*}
s_{\epsilon }  \left(1-\left(\frac{r_\epsilon}{|x_\epsilon|}\right)^2+o(1)\right)=\mu^{2}_{\epsilon} \log \left( \frac{r_{\epsilon}}{\mu_{\epsilon}}\right) \left[4d_{4} K(4,0)^2a(x_{0})+ o(1) \right] 
\end{equation*}\\
\item[(3)] If $n \geq 5$. Then  as $\epsilon \to 0$
\begin{align}\label{res:3}
   \frac{s_{\epsilon}(n-2)}{2n} \left( K(n,0)^{-n/2}+ o(1) \right) -\left( \frac{\mu_{\epsilon}}{r_{\epsilon}}\right)^{n-2} \left(
   \frac{ n^{n-2}(n-2)^{n} \omega_{n-1} }{ 2^{n-1}}  + o(1)  \right) = \mu_{\epsilon}^2 \left[ d_{n} a(x_{0})+ o(1)\right]
\end{align}
and
\begin{equation*}
s_{\epsilon}   \left(1-\left(\frac{r_\epsilon}{|x_\epsilon|}\right)^2+o(1)\right)=\mu^{2}_{\epsilon} \left[\frac{2n}{n-2}d_{n} K(n,0)^2a(x_{0})+ o(1) \right] 
\end{equation*}
\end{enumerate}
where $d_n$ is as in \eqref{def:dn:bn}  for  $n\geq 5 $ and $d_{4} =  64\omega_3$.
\end{theorem}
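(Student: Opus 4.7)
The strategy follows the template of the interior case (Theorem~\ref{th:2}): apply the Pohozaev identity~\eqref{pohozaev identity for HS 2} on the ball $U_\eps=B_{x_\eps}(r_\eps/2)$ (which lies in $\Omega$ since $r_\eps=d(x_\eps,\partial\Omega)$) and evaluate the asymptotics of its three contributions as $\eps\to 0$: the lower-order $au_\eps^2$ integral, the $s_\eps$-perturbation integral, and the boundary integrals on $\partial U_\eps$. The novel feature compared to Theorem~\ref{th:2} is that $\partial U_\eps$ sits at distance $r_\eps/2$ from $x_\eps$, which is much larger than the concentration scale $k_\eps$ but of the \emph{same} order as the distance to $\partial\Omega$; the boundary integrals thus feel the reflection at $\partial\Omega$, and the natural rescaling is the one of Proposition~\ref{convergence to green's function 2} (not the one of Theorem~\ref{first blowup theorem}).

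For the boundary integrals, set $\tilde v_\eps(y)=r_\eps^{n-2}\mu_\eps^{-(n-2)/2}\,u_\eps\circ\mathcal T((0,x'_\eps)+r_\eps y)$. Under this change of variables, $\partial U_\eps$ becomes (up to the chart $\mathcal T$) the sphere $\partial B_{\theta_\eps}(1/2)\subset\overline{\R^n_-}$ with $\theta_\eps\to\theta_0=(-1,0)$, and each boundary integral acquires an overall prefactor $(\mu_\eps/r_\eps)^{n-2}$. Using the $C^1_{\mathrm{loc}}(\overline{\R^n_-}\setminus\{\theta_0\})$ convergence $\tilde v_\eps\to w$ of Proposition~\ref{convergence to green's function 2}, with $w(y)=(n(n-2))^{(n-2)/2}\bigl(|y-\theta_0|^{-(n-2)}-|y-\pi(\theta_0)|^{-(n-2)}\bigr)$, each boundary integral converges to a closed-form expression: because $w$ is harmonic on $\R^n_-\setminus\{\theta_0\}$ and vanishes on $\partial\R^n_-$, the classical Pohozaev identity for $w$ on $B_{\theta_0}(1/2)$ together with the singular expansion of $w$ at $\theta_0$ produces the constant $\frac{n^{n-2}(n-2)^n\omega_{n-1}}{2^{n-1}}$ in~\eqref{res:3} and its analogues in the remaining cases.

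The interior terms are handled exactly as in the proof of Theorem~\ref{th:2}: a rescaling on scale $\mu_\eps$ together with Theorem~\ref{th:1} gives the dimension-dependent behaviour~\eqref{est:l2} for $\int_{U_\eps}(a+(x-x_\eps,\nabla a)/2)u_\eps^2$; splitting the $s_\eps$-integral at scale $Rk_\eps$, with Proposition~\ref{conc of energy} controlling the outer part and the inner part converging to $K(n,0)^{-n/2}$ via $(x,x_\eps)/|x|^2\to 1$ and Theorem~\ref{first blowup theorem}, produces the coefficient $\frac{s_\eps(n-2)}{2n}K(n,0)^{-n/2}+o(s_\eps)$. Assembling the three asymptotics in~\eqref{pohozaev identity for HS 2} yields the first identity of cases~(2) and~(3), and also~\eqref{res:1} in case~(1) (where the $au^2$ contribution is $o(s_\eps)$).

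For the refined relation involving $1-(r_\eps/|x_\eps|)^2$ (and, in case~(1), the conclusion $x_0=0$), I would apply a second Pohozaev identity on $U_\eps$ with the \emph{origin-centered} multiplier $x\cdot\nabla u_\eps+\frac{n-2}{2}u_\eps$. The nonlinearity contributes no bulk term this time, because $\operatorname{div}(x|x|^{-s_\eps})=(n-s_\eps)|x|^{-s_\eps}$ cancels against the $u$-term via $\frac{n-s_\eps}{2^{*}(s_\eps)}=\frac{n-2}{2}$; one is left with $\int_{U_\eps}(a+\frac{x\cdot\nabla a}{2})u_\eps^2=(\text{boundary terms})$. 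Subtracting this from~\eqref{pohozaev identity for HS 2} isolates the $s_\eps$-term against a difference of boundary integrals (involving $(x_\eps,\nu)$) plus the lower-order bulk correction $-\int\frac{x_\eps\cdot\nabla a}{2}u_\eps^2$. Decomposing $\mathcal T^{-1}(x_\eps)=((x_\eps)_1,x'_\eps)$ in the boundary chart and using $r_\eps=(1+o(1))|(x_\eps)_1|$, the angular cross terms vanish in the limit and what survives reorganizes into the factor $|x'_\eps|^2/|x_\eps|^2=1-(r_\eps/|x_\eps|)^2+o(1)$, giving the second identity of cases~(2)--(3). In case~(1), the right-hand side of this second identity is then of strictly smaller order than $s_\eps$ once the first identity has been imposed, forcing $r_\eps/|x_\eps|\to 1$ and hence $x_0=0$. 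The main technical obstacles are (i)~the closed-form evaluation of the limiting boundary integrals of $w$ on $\partial B_{\theta_0}(1/2)$, which requires careful angular integration and tracking of the mirror-charge interference at $\theta_0$ and $\pi(\theta_0)$; and (ii)~justifying that the boundary-term reorganization after subtraction of the two Pohozaev identities produces precisely the factor $1-(r_\eps/|x_\eps|)^2$ rather than a lower-order quantity, since the straightforward concentration expansion of $(x,x_\eps)/|x|^2$ does not give this factor directly.
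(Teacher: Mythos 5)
Your derivation of \eqref{res:1}--\eqref{res:3} follows the paper's route: Pohozaev on $B_{x_\epsilon}(r_\epsilon/2)$, the $au_\epsilon^2$-term estimated as in \eqref{est:l2}, the $s_\epsilon$-bulk term giving $K(n,0)^{-n/2}$, and the boundary integral evaluated by rescaling at scale $r_\epsilon$ via the singular limit function $\hat v$. The only cosmetic difference is that you rescale through the chart (working with $\tilde v_\epsilon$ centred at $(0,x'_\epsilon)$), whereas the paper uses the simpler direct rescaling $\hat v_\epsilon(x)=r_\epsilon^{n-2}\mu_\epsilon^{-(n-2)/2}u_\epsilon(x_\epsilon+r_\epsilon x)$, which is legitimate because $B_{x_\epsilon}(r_\epsilon/2)\subset\subset\Omega$; this avoids pushing a sphere through $\mathcal T$.

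For the refined identities, however, your proposal departs from the paper and, as sketched, has a gap. The paper obtains these by differentiating the scaling Pohozaev identity with respect to $(x_\epsilon)_j$, i.e., by a \emph{translation} Pohozaev identity \eqref{pohozaev identity for HS 3}, and then uses only the normal component $j=1$. Your proposal --- a second scaling Pohozaev centred at $0$, subtracted from the one centred at $x_\epsilon$ --- is algebraically equal to the weighted sum $\sum_j (x_\epsilon)_j\cdot(\text{translation Pohozaev})_j$. Two issues arise. First, the factor $1-(r_\epsilon/|x_\epsilon|)^2$ does \emph{not} come, as you state, from a reorganization of the subtracted boundary integrals; in the paper's derivation it comes from the $s_\epsilon$-bulk term \eqref{eq:30} (which carries the coefficient $(x_\epsilon)_1/|x_\epsilon|^2$) combined linearly with \eqref{res:3} so that the $(\mu_\epsilon/r_\epsilon)^{n-2}$ boundary contributions cancel. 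Your subtracted identity still carries the \emph{full} $s_\epsilon$-bulk term $-\frac{s_\epsilon(n-2)}{2n}K(n,0)^{-n/2}(1+o(1))$ (not a $(1-(r_\epsilon/|x_\epsilon|)^2)$-fraction of it), so on its own it reproduces \eqref{res:3} at leading order rather than a new relation; a further linear combination with the original Pohozaev identity is needed, and you should say so explicitly. Second, and more seriously, in the weighted sum the tangential coefficients $(x_\epsilon)_j$, $j\geq 2$, are of order $|x_\epsilon|$, which may be of order $1$ (nothing forces $x_0=0$ in dimensions $n\geq 4$), whereas the normal coefficient $(x_\epsilon)_1$ is only of order $r_\epsilon\to 0$. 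Although the limiting translation boundary integrals $T_j$ vanish for $j\geq 2$ (since $\partial_j h(0)=0$ by symmetry of the image charge), in your combination these are multiplied by $(x_\epsilon)_j=O(|x_\epsilon|)$, so you would need a \emph{quantitative} rate --- namely $T_j=o\bigl(r_\epsilon\,\mu_\epsilon^{n-2}/r_\epsilon^{n-1}\bigr)$ rather than merely $o\bigl(\mu_\epsilon^{n-2}/r_\epsilon^{n-1}\bigr)$ --- to ensure the tangential contributions do not swamp the normal one. That estimate does not follow from Proposition~\ref{convergence to green's function 2} as stated. The paper sidesteps this entirely by testing against $e_1$ alone, which is the cleaner move.
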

\begin{proof}
For convenience we define
\begin{align*}
 F_{\epsilon}=(x-x_{\epsilon}, \nu) \left(\frac{|\nabla u_{\epsilon}|^{2}}{2} + \frac{a u_{\epsilon}^{2}}{2}-  \frac{1}{2^{*}(s_{\epsilon})}  \frac{u_{\epsilon}^{2^{*}(s_{\epsilon}) }}{|x|^{s_{\epsilon}}} \right) - \left(   (x-x_{\epsilon}, \nabla u_{\epsilon})   + \frac{n-2}{2} u_{\epsilon}  \right) \partial_{\nu} u_{\epsilon}
\end{align*}

\medskip\noindent{\bf Step 1:} We claim that
\begin{align}\label{id:step1}
 \left( \frac{\mu_{\epsilon}}{r_{\epsilon}}\right)^{2-n} \int_{\partial B_{x_{\epsilon}}(r_{\epsilon}/2)}  F_{\epsilon} ~ d \sigma  = 
 -  \frac{ n^{n-2}(n-2)^{n} \omega_{n-1} }{ 2^{n-1}}  + o(1) \qquad \text{ as } ~ \epsilon \to 0
\end{align}
\begin{proof} We define 
\begin{equation}\label{def:ve}
\hat{v}_{\epsilon}(x) := \frac{r_{\epsilon}^{n-2}}{\mu_{\epsilon}^{\frac{n-2}{2}}} u_{\epsilon}(x_\epsilon+ r_{\epsilon}x) \text { for } ~x \in B_0(1).
\end{equation}
Since $d(x_\epsilon,\partial\Omega)=r_\eps$, this is well-defined. Moreover, Proposition \ref{convergence to green's function 2} yields
\begin{align}\label{lim:ve:sing}
 \lim_{\epsilon \to 0}  \hat{v}_{\epsilon} =   \hat{v}(x)=\frac{(n(n-2))^{\frac{n-2}{2}} }{ |x|^{n-2}}- \frac{(n(n-2))^{\frac{n-2}{2}} }{ | x-(2,0) |^{n-2}} \hbox{ in } C^{1}_{loc}( B_0(1) ).
 \end{align}
With the change of variable $ x \mapsto x_\epsilon + r_{\epsilon } z $ we obtain 
 \begin{eqnarray*}
&&  \left( \frac{\mu_{\epsilon}}{r_{\epsilon}}\right)^{2-n}  \int_{\partial B_{x_{\epsilon}}(r_{\epsilon}/2)} F_{\epsilon} ~ d \sigma 
 =   \int \limits_{\partial  B_0 (1/2) }   \left(  z, \nu \right) \frac{|\nabla \hat{v}_{\epsilon}|^{2}}{2} ~ d \sigma \\
&&+  \int \limits_{\partial  B_0 (1/2) }   \left( z, \nu \right)   r_{\epsilon}^{2} a \left( x_{\epsilon} + r_{\epsilon } z \right)\frac{ \hat{v}_{\epsilon}^{2}}{2}  ~ d \sigma -   \int \limits_{\partial  B_0 (1/2) }   \frac{\left(z, \nu \right)}{2^{*}(s_{\epsilon})} \left( \frac{\mu_{\epsilon}}{r_{\epsilon}}\right)^{2-s_{\epsilon}} \frac{\hat{v}_{\epsilon}^{2^{*}(s_{\epsilon}) }}{\left| \frac{x_{\epsilon} + r_{\epsilon } z  }{r_{\epsilon}} \right|^{s_{\epsilon}}}  ~ d \sigma   \\
&&- \int \limits_{\partial  B_0 (1/2) }  \left(\left( z,  \nabla \hat{v}_{\epsilon} \right)  + \frac{n-2}{2} {\hat{v}}_{\epsilon}  \right)   \partial_{\nu} \hat{v}_{\epsilon}  ~ d \sigma \label{int:Fe}
\end{eqnarray*}
Passing to limit as $\epsilon \to 0$ in \eqref{int:Fe} and using \eqref{lim:ve:sing}, we get 
\begin{equation}\label{lim:1}
  \left( \frac{\mu_{\epsilon}}{r_{\epsilon}}\right)^{2-n} \int_{\partial B_{x_{\epsilon}}(r_{\epsilon}/2)}F_{\epsilon} ~ d \sigma  = A(1/2)+o(1)\hbox{ as }\eps\to 0
 \end{equation}
 where $$A(\delta):=\int _{\partial  B_{0 } (\delta) } \left( \left(z, \nu \right) \frac{|\nabla \hat{v}|^{2}}{2} - \left( \left(z, \nabla \hat{v} \right)+ \frac{n-2}{2}v\right) \partial_{\nu} \hat{v}\right)  ~ d\sigma.$$
Let $0< \delta < 1/2 $. Since $\Delta \hat{v}=0$ in $B_{0}(1/2) \setminus B_{0}(\delta) $, applying the Pohozaev identity \eqref{pohozaev identity}, we see that $A(\delta)=A(1/2)$ for all $0<\delta<1/2$. We write 
 \begin{align}\label{def:h}
  \hat{v}(x)  =  \frac{(n(n-2))^{\frac{n-2}{2}} }{ |x |^{n-2}}+ h(x) \qquad \text{ for } x \in B_0(1)\setminus \{ 0 \} 
 \end{align}
where  $h(x)= - \frac{(n(n-2))^{\frac{n-2}{2}} }{ | x+(2,0) |^{n-2}} $. With the explicit expression of $v$ we obtain   
\begin{align*}
\lim \limits_{\delta \to 0}  A(\delta)= -  \frac{ n^{n-2}(n-2)^{n} \omega_{n-1} }{ 2^{n-1}} 
\end{align*}
Since $A$ is constant, this latest limit and \eqref{lim:1} yield \eqref{id:step1}. This completes Step 1. 
\end{proof}

\medskip\noindent{\bf Step 2:}  Proceeding similarly as in \eqref{convergence to v_{L^{2*}}} we  obtain 
\begin{align*}
 \int_{B_{x_\eps}(r_\eps/2)} \frac{u_{\epsilon}^{2^{*}(s_{\epsilon}) }}{|x|^{s_{\epsilon}}} \frac{(x, x_{\epsilon})}{|x|^{2}} ~ dx =   \left( \frac{1}{K(n,0)} \right)^{\frac{2^{*}}{2^{*}-2}} + o(1) \qquad \text{ as }~ \epsilon \to 0
\end{align*}

\medskip\noindent{\bf Step 3:} Arguing as in the proof of \eqref{est:l2}, we get that
\begin{align*}
\int_{B_{x_\eps}(r_\eps/2)}  \left( a  +  \frac{(x-x_{\epsilon}, \nabla a ) }{2}\right) u_{\epsilon}^{2} ~ dx  =   \left \{ \begin{array} {lc}
          O(\mu_{\epsilon})   &\text{for } n=3\hbox{ or }a\equiv 0,\\
          \mu^{2}_{\epsilon} \log \left( \frac{r_{\epsilon}}{k_{\epsilon}}\right) \left[64 \omega_{3} a(x_{0})+ o(1) \right] &\text{for } n=4,\\
           \mu_{\epsilon}^2 \left[ d_{n} a(x_{0})+ o(1)\right]      &\text{for } n\geq 5  .
            \end{array} \right. 
\end{align*}
where $d_n$ is as in \eqref{def:dn:bn}.
\medskip\noindent Combining Steps 1 to 3 in the Pohozaev identity \eqref{pohozaev identity for HS 2} yields \eqref{res:1}, \eqref{res:2} and \eqref{res:3}.

\medskip\noindent To get extra informations, we differentiate the Pohozaev identity \eqref{pohozaev identity for HS} with respect to the $j^{th}$ variable $(x_{\epsilon})_j$ and get 
\begin{align}{\label{pohozaev identity for HS 3}} 
&\int_{B_{x_\eps}(r_\eps/2)} \frac{\partial_{j} a}{2} u_{\epsilon}^{2}\,dx + \frac{s_{\epsilon}(n-2)}{2(n-s_{\epsilon})}  \int_{B_{x_\eps}(r_\eps/2)} \frac{u_{\epsilon}^{2^{*}(s_{\epsilon}) }}{|x|^{s_{\epsilon}}} \frac{x_{j}}{|x|^{2}} ~ dx =\notag\\
&\int_{\partial B_{x_\eps}(r_\eps/2)} \left( \nu_{j} \left(\frac{|\nabla u_{\epsilon}|^{2}}{2} + \frac{a u_{\epsilon}^{2}}{2}-  \frac{1}{2^{*}(s_{\epsilon})}  \frac{u_{\epsilon}^{2^{*}(s_{\epsilon}) }}{|x|^{s_{\epsilon}}} \right) - \partial_{j} u_{\epsilon} \partial_{\nu} u_{\epsilon} \right) ~d\sigma
\end{align}

\medskip\noindent{\bf Step 4:} We claim that
\begin{align}
 \frac{\mu_{\epsilon}^{2-n}}{r_{\epsilon}^{1-n}} \int_{\partial B_{x_\eps}(r_\eps/2)} \left( \nu_{1} \left(\frac{|\nabla u_{\epsilon}|^{2}}{2} + \frac{a u_{\epsilon}^{2}}{2}-  \frac{1}{2^{*}(s_{\epsilon})}  \frac{u_{\epsilon}^{2^{*}(s_{\epsilon}) }}{|x|^{s_{\epsilon}}} \right) - \partial_{1} u_{\epsilon} \partial_{\nu} u_{\epsilon}  \right)~d\sigma      \notag\\
=  -  \frac{  n^{n-2}(n-2)^{n} \omega_{n-1}}{ 2^{n-1} }   + o(1)
\end{align}
\begin{proof} As Step 1 above, using Proposition \ref{convergence to green's function 2} we have  as $\epsilon \to 0$
 \begin{align}\label{derived pohozaev}
&   \frac{\mu_{\epsilon}^{2-n}}{r_{\epsilon}^{1-n}}  \int_{\partial B_{x_\eps}(r_\eps/2)} \left( \nu_{j} \left(\frac{|\nabla u_{\epsilon}|^{2}}{2} + \frac{a u_{\epsilon}^{2}}{2}-  \frac{1}{2^{*}(s_{\epsilon})}  \frac{u_{\epsilon}^{2^{*}(s_{\epsilon}) }}{|x|^{s_{\epsilon}}} \right) - \partial_{j} u_{\epsilon} \partial_{\nu} u_{\epsilon}  \right)~d\sigma    \notag\\
&= \int \limits_{\partial B_{0} (1/2) } \left(  \nu_{j}  \frac{|\nabla \hat{v}|^{2}}{2} - \partial_{j} \hat{v} ~\partial_{\nu} \hat{v} \right)  ~ d\sigma +o(1)  \notag
\end{align}
where $\hat{v}_\epsilon$ and $\hat{v}$ are as in Step 1 above. Arguing as in Step 1 above , we get that
\begin{align}
 \int \limits_{\partial  B_{0 } (1/2) } \left(  \nu_{j}  \frac{|\nabla \hat{v}|^{2}}{2} - \partial_{j} \hat{v} ~\partial_{\nu} \hat{v} \right)  ~ d\sigma=    \omega_{n-1} (n-2) (n(n-2))^{\frac{n-2}{2}}\partial_{j} h(0),
\end{align}
where $h$ is as in \eqref{def:h}. For $j=1$,  taking the explicit expression of $h$ yields Step 4. \end{proof}

\medskip\noindent{\bf Step 5:} Arguing as in Step 2 we have 
\begin{align}\label{eq:30}
 \int_{B_{x_\eps}(r_\eps/2)}\frac{u_{\epsilon}^{2^{*}(s_{\epsilon}) }}{|x|^{s_{\epsilon}}} \frac{x_{1}}{|x|^{2}} ~ dx =  \frac{(x_{\epsilon})_{1} }{|x_{\epsilon}|^{2} } \left( \frac{1}{K(n,0)} \right)^{\frac{2^{*}}{2^{*}-2}}    \left(1+ o(1)  \right) \qquad \text{ as }~ \epsilon \to 0
\end{align}

\medskip\noindent Similarly, as in Step 3, for every $1 \leq j \leq n$ we have as $\epsilon \to 0$
\begin{align}
  \int_{B_{x_\eps}(r_\eps/2)} \partial_{j} a(x)~ u_{\epsilon}^{2}(x)   ~ dx  =   \left \{ \begin{array} {lc}
          O(\mu_{\epsilon}) &\text{ for}~ n=3,\\
          O\left(\mu^{2}_{\epsilon} \log \left( \frac{r_{\epsilon}}{k_{\epsilon}}\right)\right)   &\text{ for  }~ n=4,\\
          O\left( \mu_{\epsilon}^2\right)  &\text{ for }~ n\geq 5  .
            \end{array} \right. 
\end{align}
Using the Pohozaev identity \eqref{pohozaev identity for HS 3}, \eqref{res:3} and these estimates, noting that $r_\epsilon=d(x_\epsilon,\partial\Omega)= (1+o(1))|x_{\epsilon,1}|$, we then obtain that $d(x_\epsilon,\partial\Omega)=(1+o(1))|x_\epsilon|$ as $\epsilon\to 0$ when $n=3$ or $a\equiv 0$. When $n=4$, then  as $\epsilon \to 0$
\begin{align*}
  \frac{s_{\epsilon}}{4}\frac{(x_{\epsilon})_{1}}{|x_{\epsilon}|^{2}} \left(  K(4,0)^{-2} + o(1) \right) + & \frac{\mu_{\epsilon}^{2}}{r_{\epsilon}^{3}} \left(
  32  \omega_{3}+ o(1)  \right)  =O\left(\mu_{\epsilon}^2 \log \left( \frac{r_{\epsilon}}{\mu_{\epsilon}}\right)  \right) .
\end{align*}
Finally, when $n\geq 5$, we get as $\epsilon \to 0$
\begin{align*}
   \frac{s_{\epsilon}(n-2)}{2n} \frac{(x_{\epsilon})_{1}}{|x_{\epsilon}|^{2}} \left( K(n,0)^{-n/2}+ o(1) \right) &+r_{\epsilon}^{-1}\left( \frac{\mu_{\epsilon}}{r_{\epsilon}}\right)^{n-2} \left(
   \frac{ n^{n-2}(n-2)^{n} \omega_{n-1}}{ 2^{n-1} }  + o(1)  \right) \notag\\
  & = O\left(\mu_{\epsilon}^2\right)
\end{align*}
Plugging together these estimates and \eqref{res:2} and \eqref{res:3}, we get Theorem \ref{th:bdy:bis}. 
\end{proof}

\end{document}